\numberwithin{equation}{section}
\newtheorem{proposition}{Proposition}[section]
\newtheorem{lemma}[proposition]{Lemma}
\newtheorem{theorem}[proposition]{Theorem}
\newtheorem{corollary}[proposition]{Corollary}
\newtheorem{conjecture}{Conjecture}[section]
\theoremstyle{definition}
\newtheorem{remark}[proposition]{Remark}
\newtheorem{definition}[proposition]{Definition}
\newtheorem{example}[proposition]{Example}
\DeclareMathOperator{\Aut}{Aut}
\DeclareMathOperator{\DF}{DF}
\DeclareMathOperator{\lct}{lct}
\DeclareMathOperator{\Ding}{Ding}
\DeclareMathOperator{\Weil}{Weil}
\DeclareMathOperator{\divisor}{div}
\DeclareMathOperator{\can}{can}
\DeclareMathOperator{\ord}{ord}
\newcommand{\R}{\mathbb{R}}
\newcommand{\C}{\mathbb{C}}
\newcommand{\Q}{\mathbb{Q}}
\newcommand{\pr}{\mathbb{P}}
\renewcommand{\epsilon}{\varepsilon}
\newcommand{\D}{\mathcal{D}}
\newcommand{\scA}{\mathcal{A}}
\newcommand{\I}{\mathcal{I}}
\newcommand{\scO}{\mathcal{O}}
\newcommand{\J}{\mathcal{J}}
\newcommand{\mfa}{\mathfrak{a}}
\newcommand{\E}{\mathcal{E}}
\renewcommand{\L}{\mathcal{L}}
\newcommand{\X}{\mathcal{X}}
\newcommand{\Y}{\mathcal{Y}}
\renewcommand{\phi}{\varphi}
\newcommand{\ddc}{dd^c}
\newcommand\PSH{\mathrm{PSH}}
\newcommand\MA{\mathrm{MA}}
\newcommand\mi{^{-1}}
\newcommand\vol{\mathrm{vol}}
\newcommand\bsni{\bigskip\noindent}
\newcommand\bbc{\mathbb{C}}
\newcommand\bbd{\mathbb{D}}
\newcommand\bbp{\mathbb{P}}
\newcommand\bbq{\mathbb{Q}}
\newcommand\bbr{\mathbb{R}}
\newcommand\cA{\mathcal{A}}
\newcommand\cD{\mathcal{D}}
\newcommand\cE{\mathcal{E}}
\newcommand\cL{\mathcal{L}}
\newcommand\cO{\mathcal{O}}
\newcommand\cX{\mathcal{X}}
\newcommand\cY{\mathcal{Y}}
\newcommand\triv{{\mathrm{triv}}}
\title[Ding stability and K\"ahler--Einstein metrics]{ Ding stability and K\"ahler--Einstein metrics on manifolds with big anticanonical class}
\author[Ruadha\'i Dervan and R\'emi Reboulet]{Ruadha\'i Dervan and R\'emi Reboulet}
\address{Ruadha\'i Dervan, School of Mathematics and Statistics, University of Glasgow, University Place, Glasgow G12 8QQ, United Kingdom}\email{ruadhai.dervan@glasgow.ac.uk}
\address{R\'emi Reboulet, DPMMS, Centre for Mathematical Sciences, Wilberforce Road, Cambridge CB3 0WB, United Kingdom}\email{rr653@cam.ac.uk, rebouletremimath@gmail.com}
\begin{document}

\begin{abstract}

We introduce a notion of uniform Ding stability for a projective manifold with big anticanonical class, and prove that the existence of a unique K\"ahler--Einstein metric on such a manifold implies uniform Ding stability. The main new techniques are to develop a general theory of Deligne functionals---and corresponding slope formulas---for singular metrics, and to prove a slope formula for the Ding functional in the big setting. This extends work of Berman in the Fano situation,   when the anticanonical class is actually ample, and proves one direction of the analogue of the Yau--Tian--Donaldson conjecture in this setting. We also speculate about the relevance of uniform Ding stability and K-stability to moduli in the big setting.

\end{abstract}


\maketitle

\tableofcontents

\section{Introduction}

The Yau--Tian--Donaldson conjecture states that the existence of a K\"ahler--Einstein metric on a Fano manifold $X$ should be equivalent to the purely algebro-geometric notion of \emph{K-stability} of $(X,-K_X)$ \cite{STY,GT,SD1}, where $X$ being Fano means that the anticanonical class $-K_X$ is ample. There are, by now, many different proofs of this conjecture \cite{datszek,CSW,KZ,bbj,chiliguniform}, following the initial work of Chen--Donaldson--Sun \cite{CDS} (who proved K-stability implies existence) and Tian and Berman (who proved existence implies K-stablity) \cite{GT, bermankpoly}. The notion of K-stability involves associating a numerical invariant---called the \emph{Donaldson--Futaki invariant}---to certain algebro-geometric degenerations of $(X,-K_X)$ called \emph{test configurations}; K-stability asks that the Donaldson-Futaki invariant is always positive.

The alternative proof that we advertise, which applies when $\Aut(X)$ is discrete and proves a slightly weaker statement, is due to Berman--Boucksom--Jonsson and roughly goes as follows \cite{bbj}. Firstly, the existence of a K\"ahler--Einstein metric is equivalent to coercivity of the \emph{Ding functional} \cite{bob, bbgz}, a functional on the space of K\"ahler metrics lying in $c_1(X)=c_1(-K_X)$. Assuming the Ding functional is not coercive, they produce a (weak) ``destabilising'' geodesic ray in the space of K\"ahler metrics and approximate it by geodesic rays arising from test configurations. Rather than K-stability, the key algebraic input is instead the notion of \emph{uniform Ding stability}. Important work of Berman demonstrates that, given a test configuration, the slope of the Ding functional along an associated geodesic is the \emph{Ding invariant} of a test configuration  \cite{bermankpoly}, which has well-behaved enough approximation properties to  construct a destabilising test configuration from the destabilising geodesic.  Finally, input from birational geometry shows that uniform Ding stability is actually equivalent to uniform K-stability. We emphasise that it is really uniform Ding stability that plays the crucial role in their proof, and part of the reason for this is that Ding stability is more suited than K-stability to situations in which less regularity is available.

The goal of this paper is develop a theory of Ding stability when $-K_X$ is \emph{big} rather than \emph{ample}, and to relate Ding stability to the existence of K\"ahler--Einstein metrics in this generality. The condition that a line bundle $L$ on $X$ be big asks that the \emph{volume} $$\vol(L) = \lim_{k\to\infty}\frac{\dim H^0(X,kL)}{k^n/n!}$$ is strictly positive; this concept plays a crucial role in many different aspects of modern algebraic geometry. We highlight two such instances: the first is that the condition that $K_X$ be big means that $X$ is of \emph{general type}, and these are the primary setting of birational geometry \cite{BCHM}. The second is recent work of Li \cite{CL}, who explains how to use big line bundles to study the existence of constant scalar curvature K\"ahler metrics in the first Chern class of an ample line bundle $L$---here big line bundles are extremely useful even for applications to the ample setting.

The study of K\"ahler--Einstein metrics in the big setting began with early work of Tsuji \cite{tsuji}, and on manifolds with $K_X$ big a complete theory now exists, particularly due to work of Eyssidieux--Guedj--Zeriahi \cite{EGZ} (see also \cite{begz, bbgz} for further developments). The ``opposite'' situation, namely when $-K_X$ is big, is the one of interest to us, and its connection with the Yau-Tian-Donaldson conjecture seems to have not been considered before. As in the Fano setting with $-K_X$ ample, such metrics cannot exist in general, and one expects that their existence should be intimately related with algebro-geometric stability. Our main result confirms this:

\begin{theorem}\label{intromainthm} Let $X$ be a smooth projective variety with $-K_X$ big.
\begin{enumerate}[(i)]
\item If $X$ admits a K\"ahler--Einstein metric, then it is  Ding semistable.
\item If $X$ admits a unique K\"ahler--Einstein metric, then it is uniformly Ding stable.
\end{enumerate}
\end{theorem}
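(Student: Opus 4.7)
The plan is to adapt Berman's approach \cite{bermankpoly} from the Fano case to the big setting, with a slope formula for the Ding functional along weak geodesic rays being the main new ingredient. Given a normal test configuration $(\cX, \cL)$ for $(X, -K_X)$, I would first associate a weak geodesic ray $(\phi_t)_{t\geq 0}$ of finite-energy singular positive metrics on the big class $c_1(-K_X)$, starting from a reference metric with minimal singularities. Unlike the ample case, one must work throughout in the finite-energy class $\cE^1$ defined via non-pluripolar Monge--Amp\`ere products, and verify that the ray genuinely sits inside $\cE^1$.

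The second step is to establish the slope formula
\[ \lim_{t\to\infty}\frac{D(\phi_t)}{t} \;=\; \Ding^{\na}(\cX, \cL), \]
where $D(\phi) = -E(\phi) - \log\int_X e^{-\phi}$ is the Ding functional, $E$ is the non-pluripolar Monge--Amp\`ere energy, and $\Ding^{\na}$ is the non-Archimedean Ding invariant. This should decouple into slope formulas for $E$ and for the log-integral term. The former is obtained by interpreting $E$ as a Deligne functional and invoking the general slope theory for Deligne functionals of singular metrics developed in the body of the paper; the latter is a non-Archimedean log-canonical threshold type quantity, whose identification requires careful control of the singularities of $\phi_t$ relative to the reference along the ray.

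The third step is to combine the slope formula with analytic properties of $D$ implied by the existence of a K\"ahler--Einstein metric. For part (i), existence forces $D$ to be bounded below on $\cE^1$, so the slope formula immediately yields $\Ding^{\na}(\cX, \cL) \geq 0$ for every test configuration, which is Ding semistability. For part (ii), uniqueness of the K\"ahler--Einstein metric should upgrade boundedness to coercivity $D(\phi) \geq \delta J(\phi) - C$, via a standard compactness and strict-convexity argument for $D$ along $\cE^1$-geodesics (with $\Aut(X)$ playing no role in the uniqueness setting). Applying the slope formula to both $D$ and $J$ then gives $\Ding^{\na}(\cX,\cL) \geq \delta J^{\na}(\cX,\cL)$, which is uniform Ding stability.

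The principal obstacle I expect is the Deligne-functional slope theory itself in the big setting. In the ample case one can work with smooth positive metrics on resolutions of $\cX$ and all intersection-theoretic quantities are manifestly well-defined; here the relevant metrics are singular everywhere, the central fibre of $\cX$ may carry only a big rather than ample polarisation, and the Monge--Amp\`ere energy is defined via non-pluripolar products. Substantial work must therefore go into proving approximation results that identify limits of intersection numbers on $\cX$ with slopes of Deligne functionals along weak geodesic rays, and verifying these are stable enough under regularisation for the slope formula for $D$ to follow. Once this analytic/birational machinery is in place, the deduction of Theorem~\ref{intromainthm} should closely mirror the Fano argument.
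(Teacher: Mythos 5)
Your overall architecture is the same as the paper's: decompose the Ding functional into the Monge--Amp\`ere energy (treated as a Deligne functional, with slope the positive intersection number $\langle\L^{n+1}\rangle$) and the $L$-part (with slope a log canonical threshold of $\|\L\|$), then combine with boundedness below (resp.\ coercivity) of $D$ on $\cE^1$, which the paper derives exactly as you suggest from minimisation by the K\"ahler--Einstein metric and, for coercivity, from uniqueness. Two points in your outline, however, deviate in ways that hide real work. First, the paper does \emph{not} associate a weak geodesic ray to a test configuration. It instead constructs a \emph{realising} $S^1$-invariant psh metric $\Phi$ on $\L$ with both global and fibrewise minimal singularities (Proposition \ref{prop_existencemin}, which is where the hypothesis that $\L-L$ is effective enters), and proves the slope formulas along the induced subgeodesic ray (Theorem \ref{thm-deligne-slope}, Corollary \ref{coro_slopeding2}). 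The minimal-singularity condition on the \emph{total space} is exactly what makes the slopes equal the algebraic quantities $\langle\L^{n+1}\rangle$ and $\lct((\X,\Delta,\|\L\|);\X_0)$; for a metric of other singularity type one only gets the singularity-type-relative invariant $\Ding_{[\Phi]}$ (Corollary \ref{coro_generalcase}). If you insist on a geodesic ray you must additionally prove that its singularity type is the minimal one determined by $\|\L\|$, which is a nontrivial step in the big setting and is simply unnecessary: since coercivity $D\geq\delta J-C$ is an inequality on all of $\cE^1$, any ray with the right singularity control suffices, and convexity of $D$ along geodesics is only used in the analytic part (Theorem \ref{thm_dingconvex}, Theorem \ref{thm_dingcoercive}).

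Second, two smaller corrections. Strict convexity of the Ding functional is \emph{open} in the big setting, so your ``compactness and strict-convexity argument'' must be replaced by the paper's argument (following Trusiani): plain convexity along weak geodesics, lower semicontinuity of $D$, weak compactness of sup-normalised energy sublevel sets, and uniqueness of the minimiser. Finally, the paper's notion of test configuration degenerates \emph{birational models} $(Y,L_Y)$ of $(X,-K_X)$ with the same volume, so the deduction of stability also requires transporting coercivity between $\cE^1(X,\theta)$ and $\cE^1(Y,\theta_Y)$ via the identification of finite-energy spaces under birational equivalence (Theorem \ref{thm_pullbackenergy}), together with the reduction to smooth dominant big test configurations (Proposition \ref{prop:suff-dominant}); your proposal, understandably, does not account for this quantification, but without it one only obtains stability with respect to test configurations whose general fibre is $(X,-K_X)$ itself.
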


We expect that uniqueness of K\"ahler--Einstein metrics happens precisely when $\Aut(X)$ is discrete. In particular, our result proves one direction of the Yau--Tian--Donaldson in this setting.

Technically, the main difficulty with K\"ahler geometry in big classes is that the most natural metrics are actually \emph{singular}, and hence one must employ the tools of pluripotential theory. The least singular metrics are those with \emph{minimal singularities}, and  even defining K\"ahler--Einstein condition in this setting requires some care: we must assume that metrics with minimal singularities are \textit{klt}, i.e. that their multiplier ideal sheaf is trivial. This condition can be formulated purely algebraically in our setting, by asking for the asymptotic multiplier ideal sheaf of the linear series $\|-K_X\|$ to be trivial. Thus a first step towards Theorem \ref{intromainthm} is to develop the foundations of the theory of K\"ahler--Einstein metrics and the Ding functional in the big setting. Our setting is morally related to recent work of Trusiani \cite{AT}, who develops a theory of K\"ahler--Einstein metrics with \emph{prescribed singularities}; Trusiani assumes $-K_X$ ample, but fixes the singularity type of the metrics of interest. By adapting his ideas, using foundational pluripotential-theoretic work of Darvas--di Nezza--Lu \cite{ddnlmetric, ddnlfullmass} and a key convexity result of Berndtsson--P\u{a}un \cite{bpaun}, we prove the following:

\begin{theorem}\label{introanalytichm} Let $X$ be a  smooth projective variety with $-K_X$ big.
\begin{enumerate}[(i)]
\item If $X$ admits a K\"ahler--Einstein metric, then  the Ding functional is bounded below.
\item If $X$ admits a unique K\"ahler--Einstein metric, then the Ding functional is coercive.
\end{enumerate}
\end{theorem}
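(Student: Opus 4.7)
The plan is to adapt the variational strategy used by Berman \cite{bermankpoly} and Berman--Boucksom--Jonsson \cite{bbj} in the Fano case to the big setting, leveraging the Deligne functional theory developed earlier in the paper together with the pluripotential foundations of Darvas--di Nezza--Lu \cite{ddnlmetric, ddnlfullmass} and the convexity theorem of Berndtsson--P\u{a}un \cite{bpaun}.

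The first step is to realise $\Ding$ on the finite-energy class $\E^1(X,-K_X,\varphi_{\mathrm{min}})$ relative to an envelope $\varphi_{\mathrm{min}}$ with minimal singularities. The klt hypothesis ensures that the log-integrability piece $L(\varphi)=-\log\int_X e^{-\varphi}$ is finite and well-behaved on this space, while the Monge--Amp\`ere energy $E$, interpreted as a primitive of $\varphi \mapsto V\mi \MA(\varphi)$, is well-defined via the singular Deligne functional framework. I would then establish that $\Ding = L - E$ is convex along weak $d_1$-geodesics in $\E^1$: the Deligne-functional interpretation of $E$ forces it to be affine along geodesics, and Berndtsson--P\u{a}un convexity yields that $t \mapsto -\log\int_X e^{-\varphi_t}$ is convex along such geodesics, so the two ingredients combine to give convexity of $\Ding$.

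Given convexity, part (i) is essentially formal: a K\"ahler--Einstein potential $\varphi_{\mathrm{KE}}$ is, by its very equation, a critical point of $\Ding$ on $\E^1$, and a convex functional on a geodesic metric space is bounded below by the value at any critical point. Connecting an arbitrary $\varphi \in \E^1$ to $\varphi_{\mathrm{KE}}$ by a weak geodesic and applying convexity at the endpoints yields $\Ding(\varphi) \geq \Ding(\varphi_{\mathrm{KE}})$. For part (ii), I would argue by contradiction: if $\Ding$ is not coercive, pick $\varphi_j$ with $d_j := d_1(\varphi_j,\varphi_{\mathrm{KE}}) \to \infty$ and $\Ding(\varphi_j) \leq C$. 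Along the unit-speed $d_1$-geodesic from $\varphi_{\mathrm{KE}}$ to $\varphi_j$, convexity of $\Ding$ gives the pointwise upper bound $(1-t/d_j)\Ding(\varphi_{\mathrm{KE}}) + (t/d_j) C$. Extracting a subsequential limit geodesic ray $(\psi_t)_{t \geq 0}$ emanating from $\varphi_{\mathrm{KE}}$ (using compactness of bounded-energy rays in $\E^1$ la Darvas), this bound passes to the limit at each fixed $t$ and yields $\Ding(\psi_t) \leq \Ding(\varphi_{\mathrm{KE}})$. Combined with part (i), $\Ding(\psi_t) = \Ding(\varphi_{\mathrm{KE}})$, so every $\psi_t$ is a K\"ahler--Einstein potential; since the ray has unit speed, $\psi_t \neq \varphi_{\mathrm{KE}}$ for $t>0$, contradicting uniqueness.

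The main obstacle I foresee is the technical bridge between the bounded-potential/Fano machinery and the minimal-singularity setting. Concretely, verifying that Berndtsson--P\u{a}un convexity propagates to weak geodesics in $\E^1(X,-K_X,\varphi_{\mathrm{min}})$ will likely require approximation of the endpoints by less singular K\"ahler potentials together with a careful limit argument exploiting the klt/big hypotheses to maintain control of the integrals $\int e^{-\varphi}$. A parallel technical point is ensuring that the $d_1$-geometry, the compactness of ray families, and the lower-semicontinuity properties needed to extract and test the limit ray all function in this more singular setting --- these are precisely the foundational issues the preceding sections of the paper are designed to handle, and once they are in place the convexity/variational argument above should run through cleanly.
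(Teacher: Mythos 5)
Part (i) of your sketch is essentially the paper's argument: convexity of $D$ along weak geodesics (Berndtsson--P\u{a}un, which the paper records as Theorem \ref{thm_dingconvex}) plus a first-order computation at the K\"ahler--Einstein endpoint. Be aware that ``by its very equation, a critical point'' hides the actual content: the geodesic is not smooth, so one must differentiate $E$ and $L$ along it by hand; the paper does this via monotone and dominated convergence, using the uniform Lipschitz estimate $\sup_X|\phi_t-\phi_s|\le C|t-s|$ for geodesics with minimal-singularity endpoints (Theorem \ref{thm_geodminsing}) and the equation $\MA_\theta\langle\phi_{\mathrm{KE}}\rangle=e^{-\phi_{\mathrm{KE}}}$. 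With that filled in, (i) is fine and coincides with Theorem \ref{thm_dingmin}.

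Part (ii) has a genuine gap. First, a small one: failure of coercivity does not give a sequence with $D\langle\phi_j\rangle\le C$ and $d_j\to\infty$ (think of $D$ growing like $\sqrt{d_1}$); it only gives $D\langle\phi_j\rangle/d_j\to 0$, which, to be fair, still yields your bound $\limsup_j D\langle\psi_{j,t}\rangle\le D\langle\phi_{\mathrm{KE}}\rangle$ at each fixed $t$. The serious problem is the final step. The compactness you invoke is compactness of sup-normalised energy sublevel sets in the weak (psh/$L^1$) topology, not in $d_1$, and $d_1$ is not continuous under weak convergence; so ``unit speed'' of the approximating geodesics does not pass to the limit. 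Indeed, following your own chain of inequalities: $D\langle\psi_t\rangle\le D\langle\phi_{\mathrm{KE}}\rangle$ makes each limit point a minimiser of $D$; granting that minimisers are K\"ahler--Einstein (this is \emph{not} part (i) but a separate nontrivial converse, which the paper takes from Darvas--di Nezza--Lu), uniqueness forces $\psi_t=\phi_{\mathrm{KE}}$ for every $t$. That is not a contradiction --- it simply says the weak limit of your rays collapses to the constant ray, which is perfectly consistent with uniqueness, and your proof of (ii) does not close. The missing mechanism, which is exactly how the paper (following Trusiani) finishes: take the points $\psi_k$ at $d_1$-distance exactly $1$ from $\phi_{\mathrm{KE}}$ along the geodesics, so $D\langle\psi_k\rangle\to D\langle\phi_{\mathrm{KE}}\rangle$ by convexity; extract a weak limit $\psi$, conclude $\psi=\phi_{\mathrm{KE}}$ as above; then use the key analytic input that the $L$-part of $D$ is continuous under weak convergence (Demailly--Koll\'ar semicontinuity, valid in the big klt setting), so that convergence of $D$ forces $E\langle\psi_k\rangle\to E\langle\phi_{\mathrm{KE}}\rangle$, and weak convergence plus convergence in energy gives $d_1(\psi_k,\phi_{\mathrm{KE}})\to 0$, contradicting the distance being $1$. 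Without this upgrade from weak to $d_1$-convergence you cannot rule out the collapsing scenario. (The paper actually runs this as a quantitative statement --- positivity of the infimum of the difference quotients over the $d_1$-sphere of radius $\ge 1$, Theorem \ref{thm_dingcoercive} --- and then passes from $d_1$- to $J$-coercivity via Proposition \ref{prop_jd1comparison}, but those points are cosmetic compared with the gap above.)
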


The technical core of the paper is then to relate coercivity of the Ding functional to uniform Ding stability. To explain this, we first briefly explain our notion of a test configuration, which makes sense for a general big line bundle $L$ on $X$ (which in applications to the existence of K\"ahler--Einstein metrics will be taken to be $-K_X$). 

In the ample case, the definition of a test configuration involves a $\C^*$-degeneration $\X\to \pr^1$, and a $\Q$-line bundle $\L$ over $\X$, with $(\cX,\cL)$ restricting to $(X,L)$ on the general fibre \cite{SD1}; $\cL$ can be taken to be ample. Our notion of a test configuration differs from this in two ways. Firstly, the theory of big line bundles is birational in flavour, and so we allow the general fibre to not only be $(X,L)$, but also certain \emph{birational models} of $(X,L)$ in a sense made precise below; this is crucial both conceptually and for applications to algebraic geometry. Secondly, we ask that the line bundle on the total space of the test configuration be big, rather than ample. We emphasise that in this way the set of test configurations for $(X,-K_X)$ is naturally associated to the \emph{birational equivalence class} of $(X,-K_X)$.

Relative to a fixed smooth reference form $\theta \in c_1(X)$, the Ding functional is a difference of two components: the first is the \emph{Monge--Amp\`ere energy}, sending a psh function $\phi \to E(\phi)$; this functional appears in  many different aspects of K\"ahler geometry (having been introduced in \cite{bb} in the big setting). The second, $\phi \to L(\phi)$, is more specific to the K\"ahler--Einstein problem. Similarly our Ding invariant associated to a test configuration is a difference of two terms, the first being the \emph{volume} of $\L$---which appears in many different aspects of algebraic geometry---while the second is again more specific to the fact that the line bundle of interest is $-K_X$. With this definition in hand, Ding semistability asks that the Ding invariant be non-negative for all test configurations, with uniform Ding stability defined in a similar way; these again are properties of the birational equivalence class of $(X,-K_X)$.

Our slope formulas---which are the key to proving Theorem \ref{intromainthm} from Theorem \ref{introanalytichm}---begin by proving a slope formula for the Monge--Amp\`ere energy for paths of singular metrics; for rays of metrics associated to test configurations, we show that the slope of the Monge--Amp\`ere energy is the \emph{volume} of $\L$. In fact our results apply for a much more general class of energy functionals, which we call \emph{positive Deligne functionals}, with our slope formulas then involving the \emph{positive intersection product} of Boucksom--Favre--Jonsson when the subgeodesic has minimal singularities \cite{bfjteissier}. We actually prove slope formulas for very general singularity types, beyond the minimal singularity setting, employing recent work of Darvas--Xia and Xia who give an algebraic interpretation of mixed Monge--Amp\`ere masses \cite{darvas-xia-psef, xia-okounkov} (see also Trusiani \cite[Section 3]{AT}, who independently introduced the singularity classes relevant to these results). These sorts of slope formulae in the theory of Deligne pairings---which go back to work of Phong--Ross--Sturm in the ample setting \cite{PRS}---play a crucial role in the general Yau--Tian--Donaldson conjecture addressing the existence of constant scalar curvature K\"ahler metrics in ample classes \cite{BHJ2}.

The slope formula we prove for the remaining component $L(\phi)$ of the Ding functional generalises a key result  of Berman in the case $-K_X$ is ample \cite{bermankpoly}. Our generalised version crucially involves a measure of the \emph{singularities} of the metric on the total space (not appearing in the work of Berman).  The most important situation is again when the metric on the total space itself has minimal singularities, meaning that that the slope formula involves algebro-geometric invariants of the asymptotic linear system $\|\L\|$, though we again prove our slope formula for very general classes of singularities.

\subsection*{Beyond the K\"ahler--Einstein problem} While the notion of Ding stability is central in the theory of K\"ahler--Einstein metrics on Fano manifolds, the notion of K-stability also plays an important role, not least because it makes sense for a \emph{general} polarised variety $(X,L)$, in the sense that $L$ is an ample line bundle on $X$. K-stability is predicted to be, through the Yau--Tian--Donaldson conjecture,  the algebro-geometric counterpart to the existence of constant scalar curvature K\"ahler metrics in $c_1(L)$.

Many of the ideas and techniques we develop here similarly apply beyond the K\"ahler--Einstein setting, and in particular in Section \ref{sec:dingstability} (especially Remark \ref{rmk:kstab}) we mention that there is also a natural notion of \emph{uniform K-stability} for a pair $(X,L)$ with $L$ assumed to be \emph{big} rather than ample. While this notion plays no role in the present work, we expect that it will do so in the future, and it is enticing to ask whether there is a corresponding theory of constant scalar curvature K\"ahler metrics in the big setting. Furthermore, we expect that our theory of positive Deligne functionals and associated slope formulas to be relevant beyond the K\"ahler--Einstein setting in much the same way as the work of Boucksom--Hisamoto--Jonsson, which our work generalises \cite{BHJ2}.

\subsection*{Moduli} Ding stability and K-stability have achieved their prominence for two reasons: the first is their relationship with the existence of K\"ahler--Einstein metrics and constant scalar curvature metrics, while the second is their relationship with the construction of \emph{moduli spaces of projective varieties}. For the latter we refer to \cite{xu-survey} for a survey. It is thus natural to ask the relevance of  Ding stability and K-stability for big classes to moduli theory. We speculate slightly imprecisely, and slightly optimistically, in this direction.

We first recall  our claim that the notions of K-stability and Ding stability should be thought of as properties of equivalence classes of projective varieties along with a line bundle under the equivalence relation generated by \emph{birational equivalence}: here we say that $(X_1,L_1) \sim (X_2,L_2)$ if there exists a $(Y,L_Y)$ with birational morphisms $\pi_1: Y \to X_1$ and $\pi_2: Y \to X_2$ such that $$\pi_{1*}L_Y = L_1 \textrm{ and } \pi_{2*}L_Y = L_2$$ and the volume remains fixed, in the sense that $$\vol(L_1) = \vol(L_2) = \vol(L_Y).$$ Analytically, this equivalence relation is motivated by the fact that the space of finite energy metrics in $c_1(L_1)$ is naturally identified with that of $c_1(L_2)$, through each being identified with that of $c_1(L_Y)$ \cite{DNFT}. Algebraically, equivalence relation is very natural from the perspective of birational geometry, as we now explain.

Recall that when $K_X$ is big, $X$ is said to be of \emph{general type}. Supposing $\pi: Y \to X$ is a birational morphism, we can write $$K_Y = \pi^*K_X + \sum a_i E_i,$$ and the condition $\pi_*K_Y = K_X$ asks that $a_i\geq 0$; this holds provided $X$ has \emph{canonical singularities} (in particular it is automatic when $X$ is smooth). The volume condition then holds through the resulting  identification $$H^0(Y,kK_Y) \cong H^0(X,kK_X).$$ Thus we have $$(X,K_X) \sim (Y,K_Y)$$ provided $X$ and $Y$ are birational and have canonical singularities. As one of the main results of the minimal model programme, each such equivalence class contains a unique representative $(X_{\can},K_{X_{\can}})$ with $K_{X_{\can}}$ ample \cite{BCHM}; this is called the \emph{canonical model} of $(X,K_X)$, it has canonical singularities in general.  Perhaps the most important application of this programme is  the construction of a projective, separated moduli space of varieties with ample canonical class and fixed volume \cite{JK}. To be more precise one must carefully define the notion of a family and the class of singularities of $X$ allowed. But what we note here is that---assuming the right notion of a family---this produces a moduli space of equivalence classes of varieties $(X,L)$ containing a representative with $K_X$ big.

It is thus reasonable to ask whether there exists a moduli space of birational equivalence classes of K-stable varieties $(X,L)$ with $L$ big, which seems most tractable in the situation $L=-K_X$ (for some representative of the birational equivalence class). In particular:

\begin{conjecture}\label{intro-con}
There is a projective, separated moduli space of birational equivalence classes of uniformly K-stable (or uniformly Ding stable) varieties $(X,L)$ such that each equivalence class contains a member with $L=-K_X$ big.
\end{conjecture}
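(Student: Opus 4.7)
Building on the analogy with the K-moduli space of Fano varieties \cite{xu-survey}, my strategy would be to first select a canonical representative in each birational equivalence class, thereby reducing the question to a more classical moduli problem to which the existing Fano K-moduli machinery can, step by step, be adapted. The guiding principle is that a canonical representative with ample anticanonical class should always exist within a uniformly stable birational equivalence class, so that the moduli space being constructed is in essence the classical K-moduli space of klt Fanos together with the birational information hidden in the choice of representative.

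The natural candidate for a canonical representative is the anticanonical model $X_{\mathrm{ac}} := \Proj R(X,-K_X)$, whenever the anticanonical ring $R(X,-K_X) = \bigoplus_{k} H^0(X,-kK_X)$ is finitely generated. For a variety of Fano type this finite generation is a consequence of \cite{BCHM}, and the resulting pair $(X_{\mathrm{ac}}, -K_{X_{\mathrm{ac}}})$ is a klt Fano lying in the same birational equivalence class as $(X,-K_X)$ in the sense defined just before Conjecture \ref{intro-con}. A sensible first move is therefore to prove that uniformly K-stable (or Ding stable) varieties with $-K_X$ big admit such canonical ample models. Analytically, Theorems \ref{intromainthm} and \ref{introanalytichm} produce a K\"ahler--Einstein current with minimal singularities, and one expects the locus where this current fails to be a smooth metric to be precisely contracted by the birational map to $X_{\mathrm{ac}}$; algebraically, this should amount to finite generation of the anticanonical ring together with klt control of the resulting model.

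Once canonical ample models are available in families, the remaining steps parallel those of the Fano K-moduli construction: boundedness of uniformly stable varieties of fixed anticanonical volume, Zariski openness of stability in families, separatedness from uniqueness of K-polystable degenerations, properness via K-semistable reduction, and projectivity from positivity of the CM line bundle, all as surveyed in \cite{xu-survey}. Each of these inputs would require a verification that formation of the canonical model is functorial in families of uniformly stable big-anticanonical varieties, so that the moduli stack of such varieties maps to the moduli stack of their ample canonical models; one would then descend the existing projective good moduli space along this map, and check that the fibres of this descent exactly recover the birational equivalence classes appearing in the statement of the conjecture.

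The main obstacle, in my view, is the canonical model step. Finite generation of the anticanonical ring is a deep consequence of the Minimal Model Program which is not currently known unconditionally, even for smooth varieties with $-K_X$ big; bridging from uniform Ding stability to such finite generation would presumably require either a new algebraic argument or an analytic input exploiting the K\"ahler--Einstein metric produced in this paper. Absent such a theorem, one would be forced to formulate the moduli problem directly in terms of birational equivalence classes, which would in turn require a genuinely new notion of \emph{family}---and, in particular, of flatness and descent---for such classes, and this appears a formidable foundational task in its own right.
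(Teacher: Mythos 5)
The statement you are addressing is not a theorem of the paper but Conjecture \ref{intro-con}, which the authors explicitly describe as speculative and, as stated, not even well-formed: they stress that no notion of a \emph{family} of such birational equivalence classes has been defined, and they offer no proof. So there is no argument in the paper to compare yours against, and your text should be read as a research programme rather than a proof. As a programme it is reasonable and in fact consistent with how the authors themselves speculate (they raise precisely the question of finite generation of the anticanonical ring for Ding/K-semistable $X$ with $-K_X$ big, answered affirmatively in the subsequent work of Xu \cite{xu} discussed in the introduction, which also gives the equivalence of uniform Ding stability between $X$ and the resulting Fano model), but none of its steps is carried out, so it does not establish the conjecture.

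Concretely, the gaps are: (1) the pivotal reduction to an anticanonical model $\Proj R(X,-K_X)$ cannot invoke \cite{BCHM}, since a smooth $X$ with $-K_X$ big need not be of Fano type, and the paper cites \cite{FS} for failure of finite generation in general; bridging from uniform Ding stability to finite generation is exactly the open question (at the time of writing), not an available input, and your heuristic that the K\"ahler--Einstein current of Theorems \ref{intromainthm} and \ref{introanalytichm} degenerates precisely along the contracted locus is unsubstantiated. (2) Even granting an ample model, you would still need the equivalence of uniform K-/Ding stability between $(X,-K_X)$ and the model, klt-ness of the model, and functoriality of its formation in families --- but ``family'' is undefined here, which is the foundational issue the authors flag (flatness of multiplier ideals, etc.), so the descent of the Fano K-moduli space you envisage has no domain to descend from. (3) The transplanted moduli steps (boundedness, openness, separatedness, properness, projectivity via the CM line bundle as in \cite{xu-survey}) are asserted, not verified; note in particular that properness via K-semistable reduction naturally produces polystable limits and so leaves the locus of \emph{uniformly} stable objects to which the conjecture refers, a mismatch your outline does not address. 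In short, you have identified the right obstacles, but the proposal proves nothing beyond what the paper already conjectures.
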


We should say immediately that the conjecture is speculative, and its main role is to highlight our broader claim that moduli problems in the setting of big line bundles rather than ample line bundles can (and should) be studied. The conjecture above is not even well-formed as it stands, as we have not defined what it means to have a family of such varieties (which likely requires at least that various associated multiplier ideal sheaves vary in a flat manner). Similarly projectivity---or more precisely \emph{properness}---is particularly subtle, not least as the anticanonical ring $$\bigoplus_{k \geq 0} H^0(X,-kK_X)$$ of a variety with $-K_X$ big is not finitely generated in general \cite{FS}. K-semistable Fano varieties often have better properties than arbitrary Fano varieties, and  we ask: does a  \emph{Ding semistable} (or \emph{K-semistable}) variety $(X,-K_X)$ with $-K_X$ big have finitely generated anticanonical ring? We hope that  Conjecture \ref{intro-con} and this question serve as a guides to the relevance of our work to algebraic geometry. 

We end by noting that there is a variant of this notion of birational equivalence for projective varieties endowed with big line bundles, which we explain in Remark \ref{KKL}, and which we call \emph{strong birational equivalence}. This seems equally natural from the perspective of birational geometry, and is closely motivated by work of Kaloghiros--Kuronya--Lazi\'c \cite{KKL}; it is, however, not precisely the correct notion to relate metrics on $L_1$ and $L_2$ in general.

\subsection*{Parallel and subsequent work} We end by briefly describing parallel, independent work of Darvas--Zhang and Trusiani, and subsequent work of Xu, on the questions addressed here; while in each of these the motivation is similar, the techniques are quite different. 

The work of Darvas--Zhang \cite{darvaszhang} is perhaps closest to ours in motivation, however rather than developing a theory of test configurations and associated Ding invariants, they develop a theory of \emph{delta invariants} in the big setting (involving divisorial valuations on $X$). They then show that if $-K_X$ is big with delta invariant strictly larger than $1$, then $X$ admits a K\"ahler--Einstein metric; thus their main result is in the \emph{opposite} direction to our Theorem \ref{intromainthm}. We note that they give an explicit example of a Kähler-Einstein manifold with big anticanonical bundle which admits a K\"ahler--Einstein metric (\cite{darvaszhang}, after Corollary 1.3), and also note that there is overlap in our analytic results proving Theorem \ref{introanalytichm} and their work. 

As already mentioned, Trusiani has developed a very complete analytic theory of K\"ahler--Einstein metrics with prescribed singularities on Fano manifolds \cite{AT}. In related work he also develops an algebro-geometric counterpart to this theory, by developing notions of K-stability and Ding stability in the prescribed singularity setting, and connects them to the existence of K\"ahler--Einstein metrics with prescribed singularities \cite{trusiani-ytd}. More precisely, using some of the ideas introduced in our work (namely the slope formulas) he shows that the existence of a unique K\"ahler--Einstein metric with prescribed singularities implies uniform Ding stability relative to the singularity type, and that this in turn implies a version of uniform K-stability relative to the singularity type he introduces. His (independent) notion of a test configuration in this setting is closely related to the one used here, although there are differences. In addition Trusiani relates these notions to delta invariants with prescribed singularities, and---using several new ideas along with some techniques of Darvas--Zhang---proves an existence result for K\"ahler--Einstein metrics with prescribed singularities under a delta invariant assumption.

We finally mention work of Xu \cite{xu}, which is instead completely algebro-geometric. His main result answers a question raised in the first version of this paper and asked above: he shows that the anti-canonical ring of a K-semistable variety with $-K_X$ big \emph{is} finitely generated. Xu further shows that taking Proj of the anticanonical ring of $X$ produces a genuine (possibly singular) Fano variety $Z$, that uniform Ding stability of $(Z,-K_Z)$ is equivalent to uniform Ding stability of $(X,-K_X)$ in our sense, and that our notion of uniform Ding stability is equivalent to asking that the delta invariant be greater than one, connecting the approach of this paper and that of Darvas--Zhang. Using these results, the second version of the work of Darvas--Zhang explains the analytic counterpart to this  \cite[Section 6]{darvaszhang}, connecting the existence of K\"ahler--Einstein metrics on $X$ and $Z$ (much as happens in the general type setting),  giving another approach to the theory of K\"ahler--Einstein metrics in the big setting;  we emphasise again that many of the techniques and ideas we develop and the results we prove should play important roles beyond the K\"ahler--Einstein setting.

\subsection*{Acknowledgements} We thank S\'ebastien Boucksom, Eveline Legendre, Calum Spicer, Mingchen Xia and Chenyang Xu for helpful discussions. In addition we are very grateful to T\'amas Darvas and Kewei Zhang for sharing their work \cite{darvaszhang} with us and to Antonio Trusiani for explaining his forthcoming work to us and for many useful comments. RD was funded by a Royal Society University Research Fellowship, while RR was funded by a postdoctoral fellowship associated to the aforementioned Royal Society University Research Fellowship.

\section{Positive Deligne pairings and finite-energy spaces}

There are three goals to the current section. Firstly, we recall some foundational aspects  of pluripotential theory and K\"ahler geometry in big cohomology classes (for which a standard reference is \cite{JPD}). Secondly, we explain how resulting analytic volumes---integrals of \emph{Monge--Amp\`ere masses}---compare to corresponding algebraic invariants. The only truly new material is the third goal, which extends the theory of \emph{Deligne metrics} and {Deligne pairings} to the setting of big classes. 

Throughout this section, we will denote by $(X,\omega)$ a projective K\"ahler manifold of dimension $n$, and we will \emph{not} assume any sort of positivity of its anticanonical class $-K_X$. We will use additive notation for tensor products of line bundles on $X$, i.e. if $L$, $L'$ are two such line bundles, then $kL-L'$ denotes the tensor product $L^{\otimes k}\otimes (L')^{-1}$.

\subsection{Preliminaries} 

We recall that a cohomology class $\alpha\in H^{1,1}(X,\bbr)$ is said to be \textit{pseudoeffective} (or \textit{psef}) if it contains a positive closed $(1,1)$-current, and \textit{big} if it contains a closed $(1,1)$-current $\theta\geq \varepsilon \omega$ for some $\varepsilon>0$. If $\alpha=c_1(L)$ for some $\bbq$-line bundle $L$ on $X$, then the bigness condition for $\alpha$ is equivalent to maximal growth of sections of powers of $L$, i.e. if $m$ is such that $mL$ is a line bundle, then $L$ is big if and only if its \textit{volume}
$$\vol(L):=\lim_{k\to\infty}\frac{n!\dim H^0(X,kmL)}{(km)^n}$$
is strictly positive.

\bigskip If $\alpha$ is a pseudoeffective class represented by a positive current $\theta$, any other closed positive $(1,1)$-current in $\alpha$ can be written as $\theta+dd^c\phi$ for some unique usc function $\phi:X\to\bbr\cup\{-\infty\}$. We define the set $\PSH(X,\theta)$ of \textit{$\theta$-plurisubharmonic functions} (or \textit{$\theta$-psh} functions) to be the set of usc functions $\phi$ such that $\theta+dd^c\phi$ is a closed positive $(1,1)$-current. If $[\theta]=c_1(L)$ for some $\bbq$-line bundle $L$, psh functions can be identified with potentials of positive metrics on $L$, and we will often use the same terminology for, interchangeably, $\theta$-psh functions, currents in the class of $\theta$, or metrics on $L$ (remaining careful that currents correspond to metrics modulo the rescaling action of $\mathbb{R}$).

\bsni The class $\PSH(X,\theta)$ has the property of being closed under finite maxima, convex combinations, and decreasing limits; furthermore, the usc regularisation of a family of $\theta$-psh functions is always $\theta$-psh. Thus one can always define the function
$$V_\theta:=\mathrm{usc}\sup\{0\geq \phi\in\PSH(X,\theta)\}\in\PSH(X,\theta),$$
which is in a sense the \textit{least singular} $\theta$-psh function. Any other $\theta$-psh function $\phi$ with $|\phi-V_\theta|<C$ for some $C\geq 0$ is said to have \textit{minimal singularities}. Equivalently, $\phi\in\PSH(X,\theta)$ has minimal singularities if and only if, for any $\psi\in\PSH(X,\theta)$, there exists $C\geq 0$ such that $\phi\geq\psi-C$. We write $\PSH_\mathrm{min}(X,\theta)$ the set of $\theta$-psh functions with minimal singularities, and $\PSH_{\mathrm{min},0}(X,\theta)$ the set of $\theta$-psh functions with minimal singularities and with supremum equal to $0$ (hence smaller than $V_\theta$). If $T=\theta+dd^c\phi$ for some $\theta$-psh function $\phi$ with minimal singularities, we will say that it is a \textit{current with minimal singularities}, and will often use the notation $T_{\mathrm{min}}$. (Similarly, in the line bundle case, we will speak of \textit{metrics with minimal singularities}.)

\bigskip There are two important generalisations of the intersection product of nef line bundles to more general psef line bundles. In \cite{begz}, generalising the fundamental work of Bedford-Taylor \cite{bedfordtaylor}, the authors define the \textit{non-pluripolar product} of $n$ closed positive $(1,1)$-currents $\theta_1,\ldots,\theta_n$
$$\langle \theta_1\wedge\ldots\wedge\theta_n\rangle$$
as a closed positive $(n,n)$-current on $X$, which puts no mass on pluripolar subsets of $X$ (i.e. singularity loci of psh functions on $X$). In particular, if for all $i$, $[\theta_i]=c_1(L_i)$ for some big $\bbq$-line bundle $L$ on $X$, then one can define their (analytic) positive intersection product
$$\langle L_1,\ldots, L_n\rangle_{\mathrm{an}}:=[\langle T_{1,\mathrm{min}}\wedge\ldots\wedge T_{n,\mathrm{min}}\rangle]\in H^{n,n}(X,\bbr),$$
where each $T_{i,\mathrm{min}}$ is a current with minimal singularities in $c_1(L_i)$. It is shown in \cite{begz} that this product is independent of the choice of such currents with minimal singularities, and that it recovers the usual intersection product $(L_1\cdot\dots\cdot L_n)$ if all the $L_i$ are nef.

\bsni On the other hand, one can define an \textit{algebraic} positive intersection product as in \cite{bfjteissier}, where $\langle L_1,\dots,L_n\rangle_{\mathrm{alg}}\in N^n(X)$ is identified with the least upper bound of the set of intersection numbers of the form
$$((\pi^*L_1-D_1)\cdot\ldots\cdot(\pi^*L_n-D_n))$$
where $\pi:X'\to X$ is some composition of blow-ups of $X$, and each $D_i$ is an effective $\bbq$-Cartier divisor on $X'$ such that $\pi^*L_i-D_i$ is nef. As explained in \cite[Remark 3.2]{xiaoconvex}, if $X$ is smooth projective, then
\begin{equation}
\langle L_1,\dots,L_n\rangle_{\mathrm{alg}}=\langle L_1,\dots,L_n\rangle_{\mathrm{an}},
\end{equation}
and we just write $\langle L_1,\dots,L_n\rangle$. Note in particular \cite{bouvolume} that we have
$$\vol(L)=\langle L^n\rangle.$$
If $\theta$ is a fixed closed positive $(1,1)$-current, and $\phi\in\PSH(X,\theta)$, we will write
$$\MA_\theta\langle \phi\rangle:=\langle (\theta+dd^c\phi)\wedge\ldots\wedge(\theta+dd^c\phi)\rangle,$$
which we call the \textit{Monge-Ampère operator}.

\bigskip Lastly, we state an important property of non-pluripolar products: they satisfy the following \textit{integration by parts formula} (\cite[Theorem 1.14]{begz}, see also \cite{xiaibp}, \cite{luibp}):
\begin{proposition}Let $\theta_0,\dots,\theta_n$ be closed positive $(1,1)$-currents on $X$ and $\phi_i,\psi_i$, $i=0,1$ be $\theta_i$-psh functions with $|\phi_i-\psi_i|<C$ for some $C\geq 0$. Then,
\begin{equation}
\int_X (\phi_1-\psi_1) dd^c(\phi_0-\psi_0)\wedge \theta_2\wedge\ldots\wedge \theta_n=\int_X (\phi_0-\psi_0) dd^c(\phi_1-\psi_1)\wedge \theta_2\wedge\ldots\wedge \theta_n,
\end{equation}
where $dd^c$ of bounded functions is understood in the sense of Bedford-Taylor.
\end{proposition}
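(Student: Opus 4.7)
The plan is to reduce the statement to the classical Bedford--Taylor integration by parts formula for bounded psh functions via a truncation argument. Set $u_i := \phi_i - \psi_i$ for $i = 0, 1$; by hypothesis $|u_i| \leq C$ globally. Writing $T := \langle \theta_2 \wedge \cdots \wedge \theta_n \rangle$ for the non-pluripolar product, the desired identity is
\begin{equation*}
\int_X u_1 \, dd^c u_0 \wedge T \;=\; \int_X u_0 \, dd^c u_1 \wedge T.
\end{equation*}
Although $u_0$ and $u_1$ are not themselves psh, they are bounded differences of psh functions, so the integrands make sense as differences of well-defined non-pluripolar mixed masses.

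First I would introduce the canonical truncations $\phi_i^{(k)} := \max(\phi_i, V_{\theta_i} - k)$, and analogous truncations for $\psi_i$ and for any chosen $\theta_j$-psh potentials with $j \geq 2$; each $\phi_i^{(k)}$ is $\theta_i$-psh with minimal singularities. Setting $u_i^{(k)} := \phi_i^{(k)} - \psi_i^{(k)}$, one has $u_i^{(k)} = u_i$ on the plurifine open set
\begin{equation*}
\Omega_k \;:=\; \bigcap_i \{\phi_i > V_{\theta_i} - k\} \cap \{\psi_i > V_{\theta_i} - k\},
\end{equation*}
and $|u_i^{(k)}| \leq 2C$ uniformly in $k$. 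On $\Omega_k$ all relevant potentials are locally bounded, so the Bedford--Taylor wedge $T^{(k)}$ of the truncated currents is a classical object. Regularising each truncated potential by Demailly's analytic approximation and applying Stokes's theorem on the compact manifold $X$ (which produces no boundary contribution) then yields the truncated identity
\begin{equation*}
\int_X u_1^{(k)} \, dd^c u_0^{(k)} \wedge T^{(k)} \;=\; \int_X u_0^{(k)} \, dd^c u_1^{(k)} \wedge T^{(k)}.
\end{equation*}

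Next I would pass to the limit $k \to \infty$. By the very definition of the non-pluripolar product, the restrictions $T^{(k)}|_{\Omega_k}$ (and the analogous mixed expressions appearing on each side) increase weakly to $T$, respectively to $dd^c u_0 \wedge T$ and $dd^c u_1 \wedge T$, as $k \to \infty$. Uniform boundedness of $u_i^{(k)}$ together with pointwise convergence to $u_i$ outside a pluripolar set then allow dominated convergence to pass through each integral, recovering the proposed identity in the limit provided the contributions from $X \setminus \Omega_k$ vanish.

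The main obstacle is precisely controlling this leftover mass: the truncated currents $\theta_i + dd^c \phi_i^{(k)}$ agree with the original currents on $\Omega_k$ but generally carry spurious Bedford--Taylor mass on the cutoff locus $\{\phi_i = V_{\theta_i} - k\}$, which does not a priori vanish with $k$. I would address this by the standard plurifine cutoff technique of \cite{begz}: choose a smooth function $\chi_\varepsilon$ of a truncated potential, equal to $1$ on $\Omega_{k-1}$ and supported in $\Omega_k$, and decompose each integral as an interior piece plus a transition piece involving $d\chi_\varepsilon \wedge d^c u_j^{(k)}$. Plurifine locality identifies the interior pieces with the required non-pluripolar integrands on both sides symmetrically, while boundedness of $u_0, u_1$ and the defining property that non-pluripolar products give no mass to the cutoff loci force the transition pieces to vanish as $\varepsilon \to 0$ and then $k \to \infty$; the alternative proofs in \cite{xiaibp, luibp} give cleaner implementations of this last step.
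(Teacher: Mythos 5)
The paper itself does not prove this proposition: it is quoted directly from \cite[Theorem 1.14]{begz}, with \cite{xiaibp} and \cite{luibp} cited for complete arguments, so your proposal has to stand on its own. Its outline---truncate, apply classical Bedford--Taylor integration by parts, pass to the limit via the definition of the non-pluripolar product---is the natural one, and the truncated identity itself is unproblematic provided you use constant-level cutoffs $\max(\phi_i,-k)$: with your choice $\max(\phi_i,V_{\theta_i}-k)$ the potentials on $\Omega_k$ are \emph{not} locally bounded (in a general big class $V_{\theta_i}$ is itself unbounded), so your $T^{(k)}$ is not a classical Bedford--Taylor current; the canonical approximants in the definition of the non-pluripolar product are truncations at constant levels, not relative to $V_\theta$. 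That slip is fixable.

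The genuine gap is the limiting step, which is exactly the point you delegate to \cite{xiaibp,luibp}. For bounded potentials the Bedford--Taylor products $(\theta_i+dd^c\max(\phi_i,-k))\wedge T^{(k)}$ have total mass equal to the cohomological intersection number, whereas their restrictions $\mathbf{1}_{\Omega_k}(\cdot)$ increase to the non-pluripolar product, whose total mass is in general \emph{strictly smaller} (loss of mass is the rule for big, non-nef classes). Hence the mass the truncated products place outside $\Omega_k$ does not tend to zero; it converges to the mass deficit. The leftover terms $\int_{X\setminus\Omega_k}u_1^{(k)}\,dd^c u_0^{(k)}\wedge T^{(k)}$ are therefore pairings of a bounded function against signed measures whose positive and negative parts each retain definite mass, and the observation that ``non-pluripolar products give no mass to the cutoff loci'' is beside the point: the spurious mass lives in the \emph{truncated} products, not in the non-pluripolar one. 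What must be proved is a cancellation between the two sides of the truncated identity (equivalently, vanishing of these error terms after symmetrisation), and that is the entire analytic content of the theorem; the complete proofs you cite achieve it through nontrivial inputs such as the monotonicity theorem for non-pluripolar masses of Witt Nystr\"om and Darvas--Di Nezza--Lu, not by a cutoff-plus-Stokes argument. A smaller issue in the same step: dominated convergence cannot be invoked as stated, since the measures vary with $k$; one needs monotone convergence of the positive pieces together with uniform mass bounds. As written, your proposal establishes the easy truncated identity and leaves the essential step unproved.
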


\subsection{More general volumes}\label{sect_23}

We recall various results around algebro-geometric interpretations of Monge-Amp\`ere masses. These results  will be used to give a more truly algebraic interpretation of the mixed Monge--Amp\`ere masses which will appear in our slope formulas for Deligne functionals in the case the metrics have arbitrary singularities, but are not needed for our main results around the existence of K\"ahler--Einstein metrics, where the metrics will have minimimal singularities and the  slope will be the usual positive intersection products.

Consider a $\theta$-psh function $\psi$, for $\theta\in c_1(L)$ a smooth reference metric. We first assume that $\psi$ has \emph{analytic singularities}, in the sense that locally around any point there exist holomorphic functions $f_j$ such that the function $$\psi - \log \left(\sum^m_{j=1} |f_j|^2\right)$$ is locally bounded. Denote by $\J(c\psi)$ the \emph{multiplier ideal sheaf} of $c\psi$ for $c \in \R \geq 0$, namely $$\J(\psi)(U) = \{ f \in \scO_X(U): |f|^2 e^{-c\psi} \in L^1_{\mathrm{loc}}\},$$ which is a coherent ideal sheaf by Nadel. 

\begin{theorem}[Bonavero]\cite{bonavero} The Monge--Amp\`ere mass of $\theta_{\psi}=\theta+\ddc \psi$ is given by $$\int_{X} \MA_\theta\langle \psi\rangle   = \lim_{k\to \infty} \frac{n!\dim H^0(X,L^k \otimes \J(k\psi))}{k^n}.$$\end{theorem}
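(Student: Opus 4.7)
The plan is to reduce both sides of the equality to the volume $\vol(\pi^*L - D)$ computed on a suitable log resolution. The first step is to take a log resolution $\pi: \tilde{X} \to X$ of the coherent ideal $\I = (f_1,\dots,f_m)$ locally generating the singularities of $\psi$, so that $\pi^{-1}\I\cdot\scO_{\tilde{X}} = \scO_{\tilde{X}}(-D)$ for an effective SNC divisor $D$ on $\tilde{X}$ and
\[
\pi^*\psi = \log|s_D|^2 + \varphi,
\]
with $s_D$ the canonical section of $\scO(D)$ and $\varphi$ locally bounded.

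For the pluripotential side, I use the birational invariance of non-pluripolar products to rewrite $\int_X \MA_\theta\langle\psi\rangle = \int_{\tilde{X}} \MA_{\pi^*\theta}\langle \pi^*\psi\rangle$. The decomposition $\pi^*\theta_\psi = [D] + (\pi^*\theta - [D] + \ddc\varphi)$ then identifies the upstairs current as the sum of the integration current $[D]$ (pluripolar, hence invisible to the non-pluripolar product) and a positive current in the class $\pi^*L - D$ with bounded potential; the aim is to show that the corresponding Bedford--Taylor Monge--Amp\`ere mass equals $\vol(\pi^*L - D)$. For the algebraic side, the standard formula for multiplier ideals on a log resolution gives $\J(k\psi) = \pi_*\scO_{\tilde{X}}(K_{\tilde{X}/X} - \lfloor kD\rfloor)$, and the projection formula yields
\[
H^0(X, L^k \otimes \J(k\psi)) = H^0\bigl(\tilde{X},\, k\pi^*L + K_{\tilde{X}/X} - \lfloor kD\rfloor\bigr).
\]
Since $K_{\tilde{X}/X}$ is fixed and $\lfloor kD\rfloor - kD$ has coefficients bounded in $k$, comparison with the definition of volume produces
\[
\lim_{k\to\infty}\frac{n!}{k^n}h^0\bigl(X, L^k \otimes \J(k\psi)\bigr) = \vol(\pi^*L - D),
\]
matching the pluripotential computation.

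The main obstacle is verifying that the non-pluripolar mass on $\tilde{X}$ really is $\vol(\pi^*L - D)$, since the bounded potential $\varphi$ need not be of minimal singularities in $c_1(\pi^*L - D)$ unless that class is nef. I would handle this by a Fujita-approximation argument: refine $\pi$ further so that, up to arbitrarily small error in volume, $\pi^*L - D$ is represented by a nef $\Q$-class, for which bounded positive metrics are automatically of minimal singularities and the Bedford--Taylor top intersection equals both the non-pluripolar mass and the volume. Continuity of volume and of non-pluripolar mass along such approximations, combined with Boucksom's volume characterisation \cite{bouvolume}, yields the required identification. Once it is established, both sides of the stated formula coincide with $\vol(\pi^*L - D)$.
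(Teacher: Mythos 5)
The paper does not prove this statement at all: it is quoted with a citation to Bonavero, whose own proof runs through singular holomorphic Morse inequalities. So your attempt has to be judged on its own terms, and while the overall reduction (resolve the singularities, compare both sides on $\tilde X$) is a reasonable modern route, the key analytic step has a genuine gap. After writing $\pi^*\theta_\psi = [D] + S$ with $S = (\pi^*\theta - \omega_D) + \ddc\varphi$ and $\varphi$ bounded, the non-pluripolar mass you need is the Bedford--Taylor mass of $S$, and for a closed positive current with \emph{bounded} potentials this mass is purely cohomological: integration by parts gives $\int_{\tilde X} S^n = \bigl(\pi^*L - D\bigr)^n$, the ordinary top intersection number of the class, no matter what. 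So the statement you must prove is the numerical identity $\bigl(\pi^*L - D\bigr)^n = \vol(\pi^*L - D)$, and this is exactly the assertion that the class $\pi^*L - D$ is nef. That nefness is not something to be approximated around; it is automatic here, but only because $\psi$ is $\theta$-psh: positivity of $\theta_\psi$ together with the Siu decomposition shows $S \geq 0$, boundedness of $\varphi$ shows $S$ has zero Lelong numbers, and Demailly's regularization theorem then forces the class of $S$ to be nef. This use of the positivity hypothesis is the missing idea; without it there is nothing linking the Monge--Amp\`ere mass (an intersection number) to the volume (the algebraic side).

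The Fujita-approximation workaround you propose does not close this gap. Fujita approximation produces, on further blow-ups $\mu: X' \to \tilde X$, decompositions $\mu^*(\pi^*L - D) = A + E$ with $A$ nef and $A^n$ close to $\vol(\pi^*L - D)$; it does not make the class $\pi^*L - D$ itself nef, and it says nothing about the given current $S$, whose non-pluripolar mass is unchanged under pullback and remains $\bigl(\pi^*L - D\bigr)^n$ throughout. The invoked ``continuity of volume and of non-pluripolar mass along such approximations'' is not a valid principle (non-pluripolar masses are in general only upper semicontinuous and certainly do not converge along Fujita-type perturbations of the class), so this step would fail as written. Once you replace it by the nefness argument above, both sides do equal $\bigl(\pi^*L - D\bigr)^n = \vol(\pi^*L - D)$; your algebraic side, via $\J(k\psi) = \pi_*\scO_{\tilde X}(K_{\tilde X/X} - \lfloor kD \rfloor)$ and the projection formula, is fine, up to also justifying positivity of $S$ (Siu) which you currently assert without comment.
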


To view the multiplier ideal sheaves appearing in this statement more algebraically, for an ideal sheaf $\mfa$ one takes a resolution of $\pi: Y \to (X,\mfa)$ so that $\pi^{-1}(\mfa)= \scO_Y(-F)$ for $F$ a divisor on $Y$ and sets $$\J(c\mfa) = \pi_*\scO_Y(K_Y - \pi^*K_X - \lfloor cF\rfloor),$$ where $\lfloor cF\rfloor$ is the integral part (or round-down) of $cF$. Using this one defines the multiplier ideal sheaf $\J(c|mL|)$ as the multiplier ideal sheaf of ($c$ times) the base ideal of $|mL|$, and subsequently defines the \emph{asymptotic multiplier ideal sheaf} $\J(c\|L\|)$ to be the unique maximal element over all $m$ of the collection of ideal sheaves $\J(\frac{c}{m}|mL|)$, which exists by a ascending chain argument \cite[Lemma 11.1.2]{RL-book}. These  multiplier ideal sheaves can also be defined via valuations, and we will take this perspective in Section \ref{sec:dingstability}. The analytic and algebraic volumes are then related as follows.

\begin{theorem}\cite[Section 1.6.1]{guanzhou}\label{gz-openness} Let $\psi$ have minimal singularities. Then for all $c \geq 0$ we have an equality of multiplier ideal sheaves $$\J(c\psi) = \J(c\|L\|).$$


 \end{theorem}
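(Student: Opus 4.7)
The plan is to establish the two inclusions separately. The inclusion $\J(c\|L\|) \subseteq \J(c\psi)$ is the easy one, following from the definition of an asymptotic multiplier ideal together with the minimal singularity hypothesis; the reverse inclusion $\J(c\psi) \subseteq \J(c\|L\|)$ is the genuine content, and should combine Demailly's algebraic approximation theorem with the Guan--Zhou strong openness property.

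For the easy direction, the key observation is that for each $m$ with $mL$ a line bundle, a basis $\{s_j\}$ of $H^0(X,mL)$ produces a canonical $\theta$-psh function
$$\psi_m := \tfrac{1}{m}\log\left(\sum_j |s_j|^2_{h_0^m}\right),$$
where $h_0$ is a smooth reference with curvature $\theta$. A standard log-resolution computation identifies $\J(c\psi_m)$ with the algebraic multiplier ideal $\J(\tfrac{c}{m}|mL|)$ of the base ideal of $|mL|$. Since $\psi$ has minimal singularities, $\psi \geq \psi_m - C_m$ globally, so $e^{-c\psi} \leq e^{cC_m}\,e^{-c\psi_m}$ pointwise, and hence $\J(c\psi_m) \subseteq \J(c\psi)$. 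Taking the stable maximum over $m$ then yields the claim.

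For the hard direction, given a germ $f$ with $|f|^2 e^{-c\psi} \in L^1_{\mathrm{loc}}$, the Guan--Zhou strong openness theorem supplies $\eta>0$ with $|f|^2 e^{-(c+\eta)\psi} \in L^1_{\mathrm{loc}}$. I would then use Demailly's quantitative algebraic approximation of $\psi$ by the $\psi_m$---crucially using that $\psi$ is globally comparable to $V_\theta$ by the minimal singularity property---to obtain a comparison of the form $(c+\eta)\psi \geq c\psi_m - O(1)$ for $m$ sufficiently large, via the Ohsawa--Takegoshi-based part of Demailly's theorem. Substituting this into the integrability estimate gives $|f|^2 e^{-c\psi_m} \in L^1_{\mathrm{loc}}$, so $f \in \J(c\psi_m) = \J(\tfrac{c}{m}|mL|) \subseteq \J(c\|L\|)$.

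The main obstacle is thus the second inclusion: it genuinely requires both deep inputs---strong openness to pass from the critical exponent $c$ up to $c+\eta$, and the quantitative side of Demailly approximation to convert this slack into a pointwise comparison between $\psi$ and the explicit Bergman-type potentials $\psi_m$. The minimal singularity hypothesis is indispensable in the approximation step, since without it the comparison between $\psi$ and $V_\theta$ (and therefore with the $\psi_m$) holds only locally, and the equality of multiplier ideals can fail in general.
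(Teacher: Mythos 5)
The paper does not actually prove this statement: it is quoted verbatim from Guan--Zhou (\cite[Section 1.6.1]{guanzhou}), so there is no internal proof to compare against. Your sketch is essentially a reconstruction of the argument of that reference (strong openness combined with Demailly's Bergman-kernel approximation, closing the gap in the older chain $\J((c+\epsilon)\psi)\subseteq\J(c\|L\|)\subseteq\J(c\psi)$), and your easy direction is correct as written: $\psi\ge\psi_m-C_m$ by minimal singularities, $\J(c\psi_m)=\J(\tfrac{c}{m}|mL|)$ by a log resolution, and the stable maximum gives $\J(c\|L\|)\subseteq\J(c\psi)$.

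The hard direction as you have written it, however, has a gap. First, your comparison $(c+\eta)\psi\ge c\psi_m-O(1)$ points the wrong way: to deduce $|f|^2e^{-c\psi_m}\in L^1_{\mathrm{loc}}$ from $|f|^2e^{-(c+\eta)\psi}\in L^1_{\mathrm{loc}}$ you need $c\psi_m\ge(c+\eta)\psi-O(1)$, i.e. $e^{-c\psi_m}\le Ce^{-(c+\eta)\psi}$. More seriously, even with the sign corrected, no such pointwise inequality follows from Demailly's theorem in the global setting. The Ohsawa--Takegoshi lower bound produces sections of $mL$ twisted by an auxiliary ample bundle $A$; to land in the honest linear system $|mL|$ (which is what $\J(c\|L\|)$ sees) one multiplies by a fixed section $s_0\in H^0(X,k_0L-A)$, and the resulting Bergman potential is only bounded below by $\tfrac{m-k_0}{m}\psi+\tfrac1m\log|s_0|^2-C/m$. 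At points of $\{s_0=0\}$ where $\psi$ is not singular, the term $\tfrac1m\log|s_0|^2$ cannot be absorbed into $\tfrac{\eta}{c}\psi$, so the claimed pointwise comparison fails there. The standard way to finish---and the place where the slack $\eta$ supplied by strong openness is actually spent---is an integral rather than pointwise argument: estimate $\int|f|^2e^{-c\frac{m-k_0}{m}\psi}|s_0|^{-2c/m}$ by H\"older's inequality, choosing $p>1$ close to $1$ so that $pc\tfrac{m-k_0}{m}\le c+\eta$ and then $m$ large so that $|s_0|^{-2cq/m}$ is locally integrable; this yields $f\in\J(\tfrac{c}{m}\mathfrak{b}(|mL|))\subseteq\J(c\|L\|)$. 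A final attribution point: minimal singularities is what makes your \emph{easy} inclusion work; the hard inclusion $\J(c\psi)\subseteq\J(c\|L\|)$ holds for an arbitrary positively curved metric on $L$ and does not use that hypothesis.
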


We now consider the case of a general singular $\theta$-psh function ${\psi}$, no longer assumed to have analytic singularities, in which the theory is due to Trusiani \cite{AT} and Darvas--Xia \cite{darvas-xia-psef}. In particular, the line bundle $L$ is assumed to be pseudoeffective. Here the integral of the Monge--Amp\'ere measure of the metric $\psi$ itself is no longer the appropriate quantity to link with algebro-geometric volume invariants, and instead we must consider its $\J$\emph{-model envelope} $$P_{\J}(\psi) = \sup \{\psi' \in \PSH(\theta): \psi' \leq 0 \textrm{ and } \J(c\psi) = \J(c\psi') \textrm{ for all } c \geq 0\},$$ which is a $\theta$-psh function. 

\begin{definition} We say that $\psi$ is $\J$\emph{-model} if $P_{\J}(\psi) = \psi$. \end{definition}

By Darvas--Xia \cite[Theorem 1.4]{darvas-xia-closure} and Trusiani \cite{trusiani3}, the volume bound $$\int_X\MA_\theta  \langle \theta_{\psi} \rangle \leq \int_X \MA_\theta\langle \theta_{P_{\J}(\psi)}\rangle$$ holds, but inequality is possible \cite[Example 2.19]{darvas-xia-closure}.

\begin{theorem}[Darvas--Xia]\label{DX-volume}\cite[Theorem 1.1]{darvas-xia-psef}  The Monge--Amp\`ere mass of $\theta_{P_{\J}(\psi)}$ is given by $$\int_X  \MA_\theta\langle P_{\J}(\psi)\rangle = \lim_{k\to \infty} \frac{n!\dim H^0(X,kL \otimes \J(k\psi))}{k^n}.$$\end{theorem}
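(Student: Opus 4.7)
The plan is to reduce to Bonavero's theorem (which handles the analytic-singularity case) through a Bergman--Demailly approximation of $\psi$ by $\theta$-psh functions $\psi_m$ with analytic singularities. Concretely, fix a smooth Hermitian metric $h_L$ on $L$ with curvature $\theta$, and let $\{s_j^{(m)}\}$ be an orthonormal basis of $H^0(X,mL\otimes\J(m\psi))$ with respect to the $L^2$-inner product weighted by $e^{-m\psi}$; set
$\psi_m := \frac{1}{2m}\log\sum_j |s_j^{(m)}|^2_{h_L^{\otimes m}}$. Each $\psi_m$ has analytic singularities and satisfies $\psi_m\leq \psi+C\log m/m$ (from the submean-value inequality) together with an asymptotic lower bound $\psi_m \geq (1-\epsilon_m)\psi - C'_{\epsilon_m}$ with $\epsilon_m\to 0$ (from an Ohsawa--Takegoshi extension argument applied with the weight $e^{-m\psi}$).

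Bonavero's theorem applied to each $\psi_m$ reads $\int_X\MA_\theta\langle\psi_m\rangle = \lim_k \frac{n!}{k^n}\dim H^0(X,kL\otimes\J(k\psi_m))$. Using the monotonicity $\phi_1\le \phi_2 \Rightarrow \J(c\phi_1)\subseteq\J(c\phi_2)$ together with translation invariance of $\J$ under constants, the two-sided estimates above sandwich $\J(k(1-\epsilon_m)\psi)\subseteq \J(k\psi_m)\subseteq \J(k\psi)$. Continuity in $c\in(0,1]$ of the algebraic volume $c\mapsto \lim_k \frac{n!}{k^n}\dim H^0(X,kL\otimes\J(ck\psi))$ then forces $\lim_m \int_X \MA_\theta\langle\psi_m\rangle$ to coincide with the right-hand side of the theorem. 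For the analytic side, the monotone upper-envelope regularisation $\Phi_\infty := \lim_m \bigl(\sup_{\ell\geq m}\psi_\ell\bigr)^*$ is a $\theta$-psh function with $\J(c\Phi_\infty) = \J(c\psi)$ for every $c\geq 0$ (from the sandwich) and is $\J$-model; by the maximality definition of $P_\J$ one concludes $\Phi_\infty = P_\J(\psi)$. Continuity of the non-pluripolar Monge--Amp\`ere mass along such decreasing-envelope sequences of $\J$-model approximants, in the form established by Darvas--di Nezza--Lu, then yields $\lim_m \int_X \MA_\theta\langle\psi_m\rangle = \int_X \MA_\theta\langle P_\J(\psi)\rangle$, completing the proof.

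The main obstacle is the control of this double limit in $k$ and $m$: the multiplier-ideal sandwich is only asymptotic in $k$ for each fixed $m$, so one needs enough uniformity in the error $\epsilon_m$ to interchange with $m\to\infty$; this is precisely where the Demailly--Ein--Lazarsfeld subadditivity theorem (and ultimately the openness theorem) do the heavy lifting. The secondary, more conceptual subtlety---and the reason $P_\J(\psi)$ rather than $\psi$ appears in the formula---is that the Bergman sequence $\psi_m$ converges in Monge--Amp\`ere not to $\psi$ but to the larger $\J$-envelope $P_\J(\psi)\geq\psi$; this gap is a genuine pseudoeffective-class phenomenon invisible in the minimal-singularity setting of Bonavero, and is the essential new content of the Darvas--Xia formula over the earlier work.
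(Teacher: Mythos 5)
The paper does not actually prove this statement: it is imported wholesale from Darvas--Xia \cite{darvas-xia-psef}, whose proof runs through their theory of partial Bergman kernels/partial equilibrium and model singularity types, so there is no internal argument to compare yours against; what matters is whether your sketch closes into a proof, and as written it does not. The first gap is the lower bound $\psi_m\geq(1-\epsilon_m)\psi-C'_{\epsilon_m}$. An Ohsawa--Takegoshi extension with weight $e^{-m\psi}$ needs strict positivity somewhere: in the stated generality $L$ is only pseudoeffective, so $H^0(X,mL\otimes\J(m\psi))$ can vanish for every $m$ (making $\psi_m\equiv-\infty$), and even for $L$ big the standard construction requires twisting by a fixed ample bundle $A$ (sections of $K_X+A+mL\otimes\J(m\psi)$, say) and then removing $A$ asymptotically; your sketch suppresses this entirely, and it is not cosmetic, since the whole sandwich of multiplier ideals rests on it.

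The second, more structural gap is the pair of limit interchanges you flag but do not perform. Granting the sandwich $\J(k(1-\epsilon_m)\psi)\subseteq\J(k\psi_m)\subseteq\J(k\psi)$, Bonavero gives the upper bound $\int_X\MA_\theta\langle\psi_m\rangle\leq\lim_k k^{-n}n!\dim H^0(X,kL\otimes\J(k\psi))$ easily, but the matching lower bound needs continuity at $c=1$ of $c\mapsto\lim_k k^{-n}n!\dim H^0(X,kL\otimes\J(ck\psi))$; openness only yields $\J(ck\psi)=\J(k\psi)$ for $c$ close to $1$ depending on $k$, so the required uniformity in $k$ is precisely the unproved point. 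On the analytic side, the step $\lim_m\int_X\MA_\theta\langle\psi_m\rangle=\int_X\MA_\theta\langle P_{\J}(\psi)\rangle$ is not a Darvas--di Nezza--Lu continuity statement: non-pluripolar masses are only monotone (hence one-sidedly semicontinuous) along decreasing sequences and can genuinely jump, and the assertion that no jump occurs here---that the mass depends only on the multiplier ideals through the envelope---is essentially the theorem you are trying to prove, so the appeal is circular. Finally, from $\J(c\Phi_\infty)=\J(c\psi)$ the maximality defining $P_{\J}$ only gives $\Phi_\infty\leq P_{\J}(\psi)$; your claim that $\Phi_\infty$ is $\J$-model, hence equal to $P_{\J}(\psi)$, is unjustified, and while equality of potentials is not actually needed, equality of masses returns you to the circular step above. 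In short, the Demailly-approximation strategy is a reasonable heuristic, but the three ingredients doing the real work (positivity for the global Bergman lower bound, uniformity in the $k$--$m$ double limit, and invariance of the non-pluripolar mass under the $\J$-envelope) are exactly what is missing.
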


We finally turn to more general mixed Monge--Amp\`ere measures, where instead the interpretation we give is as an intersection number over the \emph{Riemann--Zariski space} \cite{bfjteissier}. Our discussion will follow work of Xia quite closely \cite[Theorem 5.2]{xia-pp} (see also \cite{xia-vb}), and so our discussion is brief. 

\newcommand{\mfX}{\mathfrak{X}}

We first recall the basic theory of the Riemann--Zariski space associated to a smooth projective variety $X$, which is the locally ringed space denoted $\underline X$ defined by $$\underline X = \varprojlim_Y Y,$$ where $Y \to X$ is a birational model of $X$. For further details we refer to \cite{bfjteissier, favre-dang, xia-pp}.  The main point is the definition of a \emph{Weil $b$-divisor}, which is an element of $$ \varprojlim_Y \Weil(Y),$$  with $ \Weil(Y)$ denoting the set of numerical classes of $\R$-divisors on $Y$. 

Let $\theta \in c_1(L)$  with $\phi \in \PSH(\theta)$. To $\phi$ we can associate a Weil $b$-divisor $\divisor_{\underline X} \phi$ in the following manner. For a fixed birational model $Y\to X$ define $$\divisor_Y \phi = \sum_E \nu_E(\phi) E,$$ where $\nu_E(\phi)$ is the generic Lelong number of $\pi^*\phi$ along $E$; this produces a well-defined numerical class on $Y$ by \cite[Remark 2.10]{xia-pp}. Thus one can define a Weil $b$-divisor by $$\divisor_{\underline X} \phi = \varprojlim_Y\divisor_Y \phi,$$ and hence a Weil $b$-divisor $L- \divisor_{\underline X} \phi$.

Consider now for $j=1,\hdots,n$ classes $\theta_j \in c_1(L_j)$ with $\phi_j\in \PSH(\theta_j)$. For each $j$ we associate a Weil $b$-divisor $L_j - \divisor_{\underline X} \phi_j$, and we define $$(L_1 - \divisor_{\underline X} \phi_1) \cdot \ldots \cdot (L_n - \divisor_{\underline X} \phi_n) =\varprojlim_Y (L_1 - \divisor_{Y} \phi_1) \cdot \ldots \cdot (L_n - \divisor_{Y} \phi_n),$$ which exists as the right hand side is decreasing in nets. 

The main result we will use is the following.

\begin{theorem}\cite[Theorem 10.13]{xia-vb} Suppose for each  for $j=1,\hdots,n$ the $\theta_j$-psh function $\phi_j$ is $\J$-model and $\theta_j+\ddc \phi_j$ has strictly positive Monge--Amp\`ere mass. Then $$\int_X \langle (\theta_1+\ddc \phi_1)\wedge \ldots \wedge (\theta_n+\ddc \phi_n)\rangle =  (L_1 - \divisor_{\underline X} \phi_1) \cdot \ldots \cdot (L_n - \divisor_{\underline X} \phi_n).$$ \end{theorem}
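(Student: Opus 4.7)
\bigskip
\noindent\textbf{Proof proposal.} The plan is to reduce the mixed statement to a diagonal (self-intersection) statement via polarisation, and then to establish the diagonal case by identifying both sides with a volume on a cofinal family of birational models.

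First, I would establish the diagonal case $L_1=\cdots=L_n=L$, $\phi_1=\cdots=\phi_n=\phi$. By hypothesis $\phi$ is $\J$-model with $\int_X \MA_\theta\langle\phi\rangle>0$, so Theorem \ref{DX-volume} yields
$$\int_X \MA_\theta\langle \phi\rangle \;=\; \lim_{k\to\infty}\frac{n!\,\dim H^0(X,kL\otimes \J(k\phi))}{k^n}.$$
I would then rewrite the right-hand side birationally: on any sufficiently high log-resolution $\pi:Y\to X$, the sheaf $\J(k\phi)$ is encoded by divisorial data reading off the generic Lelong numbers of $\phi$ along prime divisors on $Y$, so the limit above becomes the volume $\vol(\pi^*L - \divisor_Y \phi)$ on $Y$ (up to bounded discrepancy correction which is absorbed into the limit in $k$). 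Taking the inverse limit over all birational models identifies this with the Weil $b$-divisor self-intersection $(L-\divisor_{\underline X}\phi)^n$, using the standard identification of top self-intersection and volume for $b$-divisors arising from $\J$-model potentials in the sense of Boucksom--Favre--Jonsson \cite{bfjteissier}.

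Second, for the general mixed statement I would polarise. Both sides are symmetric and multilinear in the inputs in the appropriate sense: Weil $b$-divisor intersection is multilinear by its inverse-limit construction from classical numerical intersection on each fixed model $Y$, while the non-pluripolar product is multilinear on compatible families of $\J$-model currents modulo operations such as $\phi \mapsto \max(\phi+\log t,\psi)$ that preserve the $\J$-model property by Darvas--Xia. The usual Khovanskii--Teissier-style polarisation identity, applied to sums $\sum_j s_j \phi_j$ with small positive rational $s_j$, then extracts the mixed equality from the already-established diagonal identity; the strict-positivity hypothesis on Monge--Amp\`ere masses ensures we remain in the $\J$-model regime where Theorem \ref{DX-volume} is applicable at every polarisation step.

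The main obstacle I anticipate is on the analytic side of the polarisation. Algebraically, multilinearity of $b$-divisor intersection is automatic from the inverse-limit definition. Analytically, one must verify that sums, maxima, and translates of $\J$-model potentials remain $\J$-model (so that Theorem \ref{DX-volume} still applies), that the associated Monge--Amp\`ere masses vary continuously in the perturbation parameters needed to isolate the mixed term, and that the envelope operator $P_{\J}$ commutes appropriately with the operations used in polarisation. These are precisely the structural properties of the $\J$-model class developed in \cite{darvas-xia-psef} and \cite{xia-vb}, and the technical heart of the argument reduces to invoking those results in the correct order; any delicate point will concentrate on ensuring that the diagonal identity extends continuously and linearly across the polarisation.
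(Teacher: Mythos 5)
You should first note that the paper does not actually prove this statement: it is quoted from Xia \cite[Theorem 10.13]{xia-vb}, and the accompanying remark explains that an earlier version of the paper proved it by extending the \emph{diagonal} case ($\theta_1+\ddc\phi_1=\dots=\theta_n+\ddc\phi_n$) due to Xia \cite[Theorem 5.2]{xia-pp} and Dang--Favre \cite{favre-dang}. Your treatment of the diagonal case is essentially that known route (Theorem \ref{DX-volume} plus the identification of the asymptotic $h^0$ with volumes of the $b$-divisor on models), and as a sketch it is acceptable, although the ``bounded discrepancy absorbed into the limit'' step is really the content of \cite{darvas-xia-psef, xia-pp} rather than a routine rewriting, since $\J(k\phi)$ is not computed by $\divisor_Y\phi$ on any fixed model unless $\phi$ has analytic singularities.

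The genuine gap is the polarisation step. To deduce the mixed identity you apply the diagonal identity to $\phi_s=\sum_j s_j\phi_j$ relative to $\theta_s=\sum_j s_j\theta_j$, and your diagonal argument is only valid when $\phi_s$ is itself $\J$-model: without that hypothesis the diagonal identity is false in general, since $\int_X\MA_{\theta_s}\langle\phi_s\rangle$ can be strictly smaller than $\int_X\MA_{\theta_s}\langle P_{\J}(\phi_s)\rangle$ (cf.\ \cite[Example 2.19]{darvas-xia-closure}), while $\divisor_{\underline X}\phi_s=\divisor_{\underline X}P_{\J}(\phi_s)$ because divisorial Lelong numbers are recovered from the asymptotic multiplier ideals; so for a non-$\J$-model $\phi_s$ the two sides of the diagonal identity genuinely disagree. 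You nowhere establish that positive linear combinations of $\J$-model potentials are $\J$-model, and this is not an innocuous bookkeeping point: subadditivity of multiplier ideals goes the wrong way, $P_{\J}(\phi_1+\phi_2)\geq\phi_1+\phi_2$ can a priori be strict, and no such closure statement is available in the cited literature in the form you need. Your fallback---that the required ``structural properties of the $\J$-model class'' are in \cite{darvas-xia-psef} and \cite{xia-vb}---is circular, since \cite[Theorem 10.13]{xia-vb} is exactly the mixed statement being proved, and the passage from the diagonal to the mixed case is where all the difficulty lies (indeed it is the reason the authors ultimately deferred to Xia's proof, which does not proceed by naive polarisation). Two smaller points: the operation $\max(\phi+\log t,\psi)$ you invoke plays no role in a polarisation argument, and Khovanskii--Teissier concerns inequalities between mixed masses, not the multilinear expansion you need; multilinearity of the non-pluripolar product is unconditional and is not the issue.
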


\begin{remark}The first version of the current work contained a proof of this result, as an extension of a result of Xia  \cite[Theorem 5.2]{xia-pp}  and Dang--Favre \cite[Proof of Theorem 6 (3)]{favre-dang} in the case $\theta_1+\ddc \phi_1 =\hdots = \theta_n+\ddc \phi_n$. Xia informed us that a proof of the general case was also contained in his simultaneous work \cite{xia-vb}, and since our proof was a variant of Xia's proof \cite{xia-pp}, we have chosen to omit the details in the current version. \end{remark}

\subsection{Positive Deligne pairings}

In the relative setting, namely with $X\to Y$ (say) a smooth flat morphism of projective varieties of relative dimension $n$, the \textit{Deligne pairing} construction generalises the intersection product of line bundles on $X$ to this relative setting, yielding an ``intersection line bundle''
$$\langle L_1,\dots,L_{n+k}\rangle_{X/Y}$$
over $Y$ for $n+k$ (relatively) ample line bundles over $X$. Historically, the case of relative dimension one was settled in Deligne's original article \cite{delignedetcoh}, and then extended to the general case by Elkik \cite{elkik1, elkik2}. It was first used in K\"ahler geometry by \cite{shouwu-zhang, PRS}. 

An important aspect of the construction is that it extends to the \textit{metrised} case: if each $L_i$ is endowed with a continuous metric $\phi_i$, one obtains a continuous metric \cite{moriwaki}
$$\langle \phi_1,\dots,\phi_{n+k}\rangle_{X/Y}$$
on the line bundle $\langle L_1,\dots,L_{n+k}\rangle_{X/Y}$. The construction is symmetric, multilinear, and satisfies the change of metric formula
$$\langle \phi_1,\dots,\phi_{n+k}\rangle_{X/Y} - \langle \phi_1',\dots,\phi_{n+k}\rangle_{X/Y} = \pi_*\left((\phi_1-\phi_1')(dd^c\phi_2\wedge\dots\wedge dd^c\phi_{n+k})\right).$$
Note that it has been extended to the case of general Kähler classes on non-projective Kähler manifolds in \cite[Definition 2.1]{zak}, \cite{zaknotthesis} (see also \cite[Section 3]{dervan-ross}) and to singular metrics on (relatively) ample line bundles in \cite{rebdiscs}.

\bsni In this section, we show how to extend this construction to the case of \textit{big} cohohomology classes and metrics with minimal singularities, in a way that is compatible with the positive intersection pairing. Although we will only consider globally big classes in the present article, we also state our definitions in the relatively big setting.

\begin{definition}
Let $\pi:X\to Y$ be a surjective flat holomorphic morphism of relative dimension $n$, whose fibres we denote by $X_y$, $y\in Y$. Let $\alpha$ be a cohomology class on $X$. We say that it is \textit{relatively big} if it is represented by a closed $(1,1)$-current $\theta$ such that its restriction $\theta_y:=\theta|_{X_y}$ represents a big cohomology class on $X_y$. We say that a current $T\in\alpha$ is \textit{$\theta$-relatively psh} if $T=\theta+dd^c\phi$ and $\phi_y:=\phi|_{X_y}$ is $\theta_y$-psh for each $y\in Y$. We say that it has \textit{relative minimal singularities} (in the class of $\theta$) if it is $\theta$-relatively psh and $\phi_y$ has minimal singularities in the class of $\theta_y$ for each $y\in Y$.
\end{definition}

\begin{definition}
Let $\pi:X\to Y$ be a surjective flat holomorphic morphism of relative dimension $n$ with smooth fibres. Let $\theta_0,\dots,\theta_{n+k-1}$ be closed $(1,1)$-currents on $X$ representing relatively big cohomology classes. For each $i=0,\dots,n+k-1$, let $\phi_i,\psi_i$ be $\theta_i$-relatively psh potentials with relative minimal singularities. We define the (difference of) \textit{positive Deligne pairing}(s) of the $(\theta_i,\phi_i,\psi_i)_i$ as
\begin{align*}
    &\llangle \phi_0,\dots,\phi_{n+k-1}\rrangle_{X/Y}-\llangle \psi_0,\dots,\psi_{n+k-1}\rrangle_{X/Y}:=\\
    &\pi_*\left((\phi_0-\psi_0)\,\langle(\theta_1+dd^c \phi_1)\wedge\dots\wedge(\theta_{n+k-1}+dd^c\phi_{n+k-1})\rangle\right)\\
    &+\pi_*\left( (\phi_1-\psi_1)\,\langle(\theta_0+dd^c \psi_0)\wedge(\theta_2+dd^c\phi_2)\wedge\dots\wedge(\theta_{n+k-1}+dd^c\phi_{n+k-1})\rangle\right)\\
    &+\dots\\
    &+\pi_*\left( (\phi_{n+k-1}-\psi_{n+k-1})\,\langle(\theta_0+dd^c \psi_0)\wedge\dots\wedge(\theta_{n+k-2}+dd^c \psi_{n+k-2})\rangle\right),
\end{align*} where $\pi_*$ denotes the fibre integral. If $Y$ is a point, we will often drop the subscript $X/Y$ and refer to these as \emph{positive Deligne functionals}, as those generalise the Deligne functionals of \cite{zak} to the big setting.
\end{definition}

Note also that we should make explicit the choice of reference currents $\theta_i$ in the notation; we choose not to in order to lighten notation.

\begin{remark}Note that, in light of the change of metric formula above, we recover the usual (difference of) Deligne pairings in the case where each $L_i$ is relatively nef. Since the sum of currents with minimal singularities does not in general have minimal singularities, the construction cannot be multilinear by nature.
\end{remark}

\begin{proposition}
Differences of positive Deligne pairings are symmetric, i.e. if $\sigma$ is a permutation of $\{0,\dots,n+k-1\}$, then
$$\llangle \phi_{\sigma(0)},\dots,\phi_{\sigma(n+k-1)}\rrangle-\llangle \psi_{\sigma(0)},\dots,\psi_{\sigma(n+k-1)}\rrangle=\llangle \phi_0,\dots,\phi_{n+k-1}\rrangle-\llangle \psi_0,\dots,\psi_{n+k-1}\rrangle.$$
\end{proposition}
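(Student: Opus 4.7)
The plan is to reduce to the case of an adjacent transposition $\sigma=(i,\,i+1)$, since these generate the symmetric group on $\{0,\dots,n+k-1\}$. For such a swap, the summands indexed by $j\notin\{i,i+1\}$ in the defining telescoping sum are individually unchanged: in each such summand, positions $i$ and $i+1$ appear only as two factors inside the non-pluripolar product $\langle\cdot\rangle$, they carry the same decoration (both $\phi$ if $j<i$, both $\psi$ if $j>i+1$, since $j$ cannot lie strictly between $i$ and $i+1$), and the non-pluripolar wedge is symmetric in its arguments (\cite{begz}). Hence it suffices to track the summands at positions $j=i$ and $j=i+1$.

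Setting $\Theta=\bigwedge_{l<i}(\theta_l+\ddc\psi_l)\wedge\bigwedge_{l>i+1}(\theta_l+\ddc\phi_l)$ for the common outer factor, a direct expansion of the four affected summands (two on each side of the claimed identity) shows that the difference between the two sides collapses to
\begin{equation*}
\pi_*\!\left((\phi_{i+1}-\psi_{i+1})\,\ddc(\phi_i-\psi_i)\wedge\Theta\right)-\pi_*\!\left((\phi_i-\psi_i)\,\ddc(\phi_{i+1}-\psi_{i+1})\wedge\Theta\right),
\end{equation*}
where each $\ddc$ acts in the Bedford--Taylor sense on the bounded function $\phi_j-\psi_j$; these differences are globally bounded on each fibre because both $\phi_j$ and $\psi_j$ have relative minimal singularities in the class of $\theta_j$.

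To conclude, I would apply the non-pluripolar integration by parts formula recalled earlier (following \cite{begz}) fibrewise over $Y$: on each fibre $X_y$ the two bounded functions $(\phi_i-\psi_i)|_{X_y}$ and $(\phi_{i+1}-\psi_{i+1})|_{X_y}$ are interchangeable against the auxiliary positive wedge $\Theta|_{X_y}$, so that the fibre integrals agree. Pushing forward then yields the vanishing of the displayed difference and hence the swap.

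The main obstacle is really just careful bookkeeping: verifying that the outer factor $\Theta$ is genuinely identical in all four affected summands (reducing to the observation that positions $<i$ uniformly carry $\psi$ and positions $>i+1$ uniformly carry $\phi$, both before and after the swap) and that the relevant mixed non-pluripolar products are well defined under the minimal singularity hypothesis. Once this is in place, no pluripotential-theoretic input beyond the already-quoted IBP formula is needed, and the argument closely parallels the classical proof of symmetry of Deligne pairings for continuous metrics.
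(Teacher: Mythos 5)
Your argument is correct and is essentially the paper's own proof: reduce to an adjacent transposition, observe that all summands not at the two swapped positions are unchanged by symmetry of the non-pluripolar product, combine the four affected summands via multilinearity into the two terms involving $\ddc(\phi_i-\psi_i)$ and $\ddc(\phi_{i+1}-\psi_{i+1})$, and conclude with the non-pluripolar integration by parts formula applied fibrewise. The only (harmless) difference is that the paper writes the computation out explicitly for $Y$ a point and defers the relative case to ``similar computations,'' which your fibrewise formulation makes explicit.
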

\begin{proof}
We only treat the case where $Y$ is a point (and hence $X$ is smooth), as the general case follows from similar (but notationally heavier) computations. Since adjacent transpositions generate the symmetric group, it suffices to assume that $\sigma$ permutes some $j$ and $j+1$. For clarity we will write $\theta_\phi^i:=\theta_i+dd^c\phi_i,\,\theta_\psi^i:=\theta_i+dd^c\psi_i$. Then,
\begin{align*}
    &\llangle \phi_{0},\dots,\phi_j,\phi_{j+1},\dots,\phi_{n}\rrangle-\llangle \psi_{0},\dots,\psi_j,\psi_{j+1},\dots,\psi_{n}\rrangle\\
    &-(\llangle \phi_{0},\dots,\phi_{j+1},\phi_{j},\dots,\phi_{n}\rrangle-\llangle \psi_{0},\dots,\psi_{j+1},\psi_{j},\dots,\psi_{n}\rrangle)\\
    =&\sum_{i=0}^{j-1}\biggl(\int_X(\phi_i-\psi_i)\langle \theta_\psi^0\wedge\dots\wedge\theta_\psi^{i-1}\wedge\theta_\phi^{i+1}\wedge\dots\wedge\theta_\phi^j\wedge\theta_\phi^{j+1}\wedge\dots\wedge\theta_\phi^n\rangle\\
    &-\int_X(\phi_i-\psi_i)\langle \theta_\psi^0\wedge\dots\wedge\theta_\psi^{i-1}\wedge\theta_\phi^{i+1}\wedge\dots\wedge\theta_\phi^{j+1}\wedge\theta_\phi^{j}\wedge\dots\wedge\theta_\phi^n\rangle\biggl)\\
    &+\biggl(\int_X(\phi_j-\psi_j)\langle \theta_\psi^0\wedge\dots\wedge\theta_\psi^{j-1}\wedge\theta_\phi^{j+1}\wedge\dots\wedge\theta_\phi^n\rangle\\
    &-\int_X(\phi_{j+1}-\psi_{j+1})\langle \theta_\psi^0\wedge\dots\wedge\theta_\psi^{j-1}\wedge\theta_\phi^{j}\wedge\dots\wedge\theta_\phi^n\rangle\biggl)\\
    &+\biggl(\int_X(\phi_{j+1}-\psi_{j+1})\langle \theta_\psi^0\wedge\dots\wedge\theta_\psi^{j}\wedge\theta_\phi^{j+2}\wedge\dots\wedge\theta_\phi^n\rangle\\
    &-\int_X(\phi_{j}-\psi_{j})\langle \theta_\psi^0\wedge\dots\wedge\theta_\psi^{j+1}\wedge\theta_\phi^{j+2}\wedge\dots\wedge\theta_\phi^n\rangle\biggl)\\
    &+\sum_{i=k+1}^n\biggl(\int_X(\phi_i-\psi_i)\langle \theta_\psi^0\wedge\dots\wedge\theta_\psi^j\wedge\theta_\psi^{j+1}\wedge\dots\wedge\theta_\psi^{i-1}\wedge\theta_\phi^{i+1}\wedge\dots\wedge\theta_\phi^n\rangle\\
    &-\int_X(\phi_i-\psi_i)\langle \theta_\psi^0\wedge\dots\wedge\theta_\psi^{j+1}\wedge\theta_\psi^{j}\wedge\dots\wedge\theta_\psi^{i-1}\wedge\theta_\phi^{i+1}\wedge\dots\wedge\theta_\phi^n\rangle).
\end{align*}
By symmetry of non-pluripolar products, terms inside the sums in the first and last (fourth) groups cancel out. Summing the the second and third groups, and again using symmetry and multilinearity of non-pluripolar products, we find
\begin{align*}
    \int_X(\phi_j-\psi_j)\langle \theta_\psi^0\wedge\dots\wedge\theta_\psi^{j-1}\wedge(\theta_\phi^{j+1}-\theta_\psi^{j+1})\wedge\dots\wedge\theta_\phi^n\rangle\\
    -\int_X(\phi_{j+1}-\psi_{j+1})\langle \theta_\psi^0\wedge\dots\wedge\theta_\psi^{j-1}\wedge(\theta_\phi^{j}-\theta_\psi^j)\wedge\dots\wedge\theta_\phi^n\rangle,
\end{align*}
which vanishes by the non-pluripolar integration by parts formula.
\end{proof}

\begin{remark} Although out of the scope of the present article, we note that a slight modification of our definition can accommodate for the case of metrics with \emph{prescribed singularities} (originally introduced in \cite{rwn} and studied e.g. in \cite{trusiani2,trusiani3} in the ample case, and in \cite{xiamabuchi} in the big case).
\end{remark}

\subsection{Metric structure and geodesics}

We now fix a big cohomology class $\alpha$ and a smooth representative $\theta\in\alpha$.

\begin{definition}Given $\phi\in \PSH_{\mathrm{min}}(X,\theta)$, we define its \textit{Monge--Ampère energy}
$$E_\theta\langle \phi\rangle:=(n+1)\mi(\llangle \phi^{n+1}\rrangle - \llangle V_\theta^{n+1}\rrangle).$$
More generally, if $\phi\in\PSH(X,\theta)$, we can consider the sequence of approximants $\phi_k:=\max(\phi,V_\theta-k)$, which have minimal singularities and decrease pointwise to $\phi$. From the definition of $E_\theta$ it is clear that it is decreasing along pointwise decreasing sequences: we can then extend it by setting
$$E_\theta\langle \phi\rangle:=\lim_{k\to\infty} E_\theta\langle \phi_k\rangle.$$
Noting  that this can take the value $-\infty$, we define the \emph{finite energy space} $\cE^1(X,\theta)$ to be the set of $\theta$-psh functions with finite Monge-Ampère energy, and denote by $\cE^1_{0}(X,\theta)$ the set of psh functions of finite energy which have their supremum equal to $0$.
\end{definition}

 In particular, $V_\theta$ is the largest function in $\cE^1_{0}(X,\theta)$. By definition of $E$, one also has that $\PSH_{\mathrm{min}}(X,\theta)\subset \cE^1(X,\theta)$. Our construction via positive Deligne functionals agrees with that used in the pluripotential theory, e.g. as in \cite[Definition 2.7]{bbgz}. We collect the following important results related to $E$.
\begin{proposition}[{\cite[Propositions 4.3, 4.4]{bb}, \cite[Lemma 2.6]{bbgz}}]\label{propma}The Monge--Ampère energy satisfies the following properties:
\begin{enumerate}[(i)]
\item it is continuous along pointwise decreasing limits, Lebesgue-almost everywhere increasing limits, and uniform limits in $\PSH_{\mathrm{min}}(X,\theta)$;
\item it is concave with respect to the natural convex structure on $\PSH_{\mathrm{min}}(X,\theta)$ given by convex combinations;
\item its upper level sets in $\cE^1_{0}(X,\theta)$ are convex and compact with respect to the psh (i.e. $L^1$) topology.
\end{enumerate}
\end{proposition}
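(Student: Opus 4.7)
The plan is to work from the explicit expansion of $E_\theta$ as a sum of mixed non-pluripolar masses. Applying the change-of-metric formula for positive Deligne pairings iteratively yields, for any $\phi \in \PSH_{\mathrm{min}}(X,\theta)$,
\begin{equation*}
E_\theta\langle \phi\rangle \;=\; \frac{1}{n+1}\sum_{j=0}^{n}\int_X (\phi - V_\theta)\,\langle (\theta+\ddc\phi)^{j}\wedge(\theta+\ddc V_\theta)^{n-j}\rangle,
\end{equation*}
and an analogous formula for $E_\theta\langle \psi\rangle - E_\theta\langle \phi\rangle$ replacing one of the two reference potentials by $\phi$ and the other by $\psi$. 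From this it is immediate that $E_\theta$ is non-decreasing for the pointwise order on $\PSH_{\mathrm{min}}(X,\theta)$, and monotonicity transfers to the extension to $\PSH(X,\theta)$ defined via the approximants $\phi_k = \max(\phi,V_\theta - k)$. With this expansion in hand each of (i)--(iii) reduces to a standard monotone/continuity property of non-pluripolar products.

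For (i), continuity along pointwise decreasing sequences in $\PSH_{\mathrm{min}}(X,\theta)$ is inherited from the continuity of non-pluripolar products along decreasing limits of Bedford--Taylor type, extended to the big setting in \cite{begz}, together with monotone convergence for the integrands $\phi_k - V_\theta$. Uniform continuity follows directly from the change-of-metric formula, which bounds $|E_\theta\langle \phi\rangle - E_\theta\langle \phi'\rangle|$ linearly by $\|\phi - \phi'\|_\infty$ times the total Monge--Amp\`ere mass. Continuity along a.e.\ increasing sequences uses the analogous upgraded continuity statement for non-pluripolar products \cite[Theorem 2.17]{begz}, applied termwise to the above expansion.

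For (ii), I would establish concavity by differentiating twice along an affine path $\phi_t = (1-t)\phi_0 + t\phi_1$ in $\PSH_{\mathrm{min}}(X,\theta)$. Differentiating the explicit formula for $E_\theta\langle \phi_t\rangle$ and simplifying using the non-pluripolar integration by parts formula gives $\tfrac{d}{dt}E_\theta\langle \phi_t\rangle = \int_X (\phi_1-\phi_0)\,\MA_\theta\langle \phi_t\rangle$. A further differentiation and another integration by parts produces
\begin{equation*}
\tfrac{d^2}{dt^2} E_\theta\langle \phi_t\rangle \;=\; n\int_X (\phi_1-\phi_0)\,\ddc(\phi_1-\phi_0)\wedge\langle (\theta+\ddc\phi_t)^{n-1}\rangle \;\leq\; 0,
\end{equation*}
the non-positivity being a polarised Hodge-index style inequality that follows from positivity of $\theta+\ddc\phi_t$ on the locus where the potentials are locally bounded, combined with a truncation/regularisation to handle the polar set.

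Finally, (iii) splits into two pieces. Convexity of $\{E_\theta \geq -c\}\cap \cE^1_0(X,\theta)$ is immediate from the concavity proved in (ii) together with the obvious convexity of the normalisation $\sup\phi=0$. The compactness assertion is the main obstacle, and I would argue it as follows. Hartogs-type compactness shows that $\{\phi \in \PSH(X,\theta) : \sup \phi = 0\}$ is $L^1$-compact, so it suffices to prove that $E_\theta$ is upper semicontinuous along $L^1$-convergent sequences in $\cE^1_0$. Given such a sequence $\phi_k \to \phi$, I would extract an a.e.\ convergent subsequence and introduce the upper-envelopes $\psi_N := \bigl(\sup_{k\geq N}\phi_k\bigr)^* \in \PSH(X,\theta)$, which decrease pointwise to $\phi$. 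By monotonicity of $E_\theta$, $E_\theta\langle \psi_N\rangle \geq \sup_{k\geq N} E_\theta\langle \phi_k\rangle$, and applying the decreasing-limit continuity from (i) to $\psi_N \searrow \phi$ gives $E_\theta\langle \phi\rangle = \lim_N E_\theta\langle \psi_N\rangle \geq \limsup_k E_\theta\langle \phi_k\rangle$, which is the required upper semicontinuity. The delicate point is that (i) is stated for $\PSH_{\mathrm{min}}$, so to apply it one must first truncate $\psi_N$ to $\max(\psi_N, V_\theta - M)$, use the decreasing-limit continuity within $\PSH_{\mathrm{min}}$, and then pass $M\to\infty$ invoking the definition of $E_\theta$ on the full space $\PSH(X,\theta)$.
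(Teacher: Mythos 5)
The paper gives no argument for this proposition: it is quoted with a citation to Berman--Boucksom and Berman--Boucksom--Guedj--Zeriahi, and your write-up is essentially a reconstruction of the proofs in those references --- the explicit expansion of $E_\theta$ as a normalised sum of mixed non-pluripolar masses, the cocycle formula giving monotonicity and Lipschitz continuity in the sup norm, the derivative computation along affine paths for concavity, and the upper-envelope argument reducing upper semicontinuity along $L^1$-limits to continuity along decreasing limits. These steps are sound; in particular your truncation of $\psi_N$ by $V_\theta-M$ to pass from $\PSH_{\mathrm{min}}(X,\theta)$ to the extension of $E_\theta$ on all of $\PSH(X,\theta)$ is exactly what is needed. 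One cosmetic remark on (ii): the inequality you call ``Hodge-index style'' is most cleanly justified by the non-pluripolar integration by parts formula recalled in the paper, namely $\int_X u\,\ddc u\wedge T=-\int_X du\wedge d^c u\wedge T\le 0$ for $u=\phi_1-\phi_0$ a bounded difference of potentials with minimal singularities and $T=\langle(\theta+\ddc\phi_t)^{n-1}\rangle$; no truncation near the polar set is then required beyond what the integration by parts machinery already handles.

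The one genuine slip is in (iii): the normalisation $\{\sup_X\phi=0\}$ is \emph{not} convex --- the supremum of a convex combination of two sup-normalised potentials can be strictly negative --- so convexity of the level sets cannot be deduced from ``the obvious convexity of the normalisation''. The standard repair, implicit in the cited references, uses concavity together with the translation property $E_\theta\langle\phi+c\rangle=E_\theta\langle\phi\rangle+c\,\vol(\theta)$: for $\phi_0,\phi_1$ in the level set, concavity gives $E_\theta\langle\phi_t\rangle\ge-c$, and since $\sup_X\phi_t\le 0$, replacing $\phi_t$ by $\phi_t-\sup_X\phi_t$ only increases the energy, so the renormalised potential lies again in the level set (equivalently, one works with the convex constraint $\sup\le 0$, on which the level sets of $E_\theta$ are still bounded and compact). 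The normalisation plays a role only in your compactness argument, where the Hartogs compactness plus upper-envelope reasoning is correct as written.
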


Following \cite{ddnlmetric}, one can endow the space $\cE^1(X,\theta)$ with a metric structure, using the following envelope construction.
\begin{definition}Let $\phi_0,\phi_1\in\PSH(X,\theta)$. We define
$$P_\theta(\phi,\psi):=\mathrm{usc}\sup\{\psi\in\PSH(X,\theta),\,\psi\leq\min(\phi_0,\phi_1)\}.$$\end{definition}
Such envelopes are usually called \textit{rooftop envelopes} in the literature (e.g. \cite{darmabuchi}). It has been shown in \cite[Theorem 2.10]{ddnlfullmass} that if both functions have finite energy, then their rooftop envelope has finite energy; furthermore, one easily sees that if both functions have minimal singularities, then so does their rooftop envelope.

\begin{definition}Given $\phi_0,\phi_1\in\cE^1(X,\theta)$, we define their $d_1$\emph{-distance} as
$$d_{1,\theta}(\phi_0,\phi_1):=E_\theta\langle \phi_0\rangle + E_\theta\langle \phi_1\rangle -2E_\theta\langle P_\theta(\phi_0,\phi_1)\rangle\geq 0.$$

\end{definition}

Note that, if $\phi_0\geq \phi_1$, then
$$d_{1,\theta}(\phi_0,\phi_1)=E_\theta\langle \phi_1\rangle - E_\theta\langle \phi_0\rangle.$$

This produces a metric by Darvas--di Nezza--Lu.

\begin{theorem}\cite[Theorems 3.6, 3.10]{ddnlmetric} The space $(\cE^1(X,\theta),d_{1,\theta})$ is a complete metric space.
\end{theorem}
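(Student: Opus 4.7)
The plan is to follow the template of Darvas--di Nezza--Lu, with all the analytic input packaged in Proposition~\ref{propma}. The central structural observation is the \emph{Pythagorean identity}: since $P := P_\theta(\phi_0,\phi_1) \leq \phi_0, \phi_1$ and $E_\theta$ is increasing under pointwise domination, when $\phi_0 \geq \phi_1$ we have $P = \phi_1$ and $d_{1,\theta}(\phi_0,\phi_1) = E_\theta\langle\phi_0\rangle - E_\theta\langle\phi_1\rangle$, whence in general
$$d_{1,\theta}(\phi_0,\phi_1) = d_{1,\theta}(\phi_0, P) + d_{1,\theta}(P,\phi_1).$$
This gives non-negativity immediately, and non-degeneracy follows because vanishing forces $E_\theta\langle P\rangle = E_\theta\langle\phi_0\rangle = E_\theta\langle\phi_1\rangle$, which combined with the first-variation formula for $E_\theta$ (expressing differences as integrals against Monge--Amp\`ere masses) forces $P = \phi_0 = \phi_1$ quasi-everywhere.

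For the triangle inequality $d_{1,\theta}(\phi_0,\phi_2) \leq d_{1,\theta}(\phi_0,\phi_1) + d_{1,\theta}(\phi_1,\phi_2)$, I would first treat the case where all three potentials lie in $\PSH_{\mathrm{min}}(X,\theta)$. After expanding via the Pythagorean identity, the inequality reduces to
$$E_\theta\langle P_\theta(\phi_0,\phi_1)\rangle + E_\theta\langle P_\theta(\phi_1,\phi_2)\rangle \leq E_\theta\langle\phi_1\rangle + E_\theta\langle P_\theta(\phi_0,\phi_2)\rangle,$$
which one obtains by combining the concavity of $E_\theta$ (Proposition~\ref{propma}(ii)) with the extremal characterisation of rooftop envelopes. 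The general case would follow by replacing each $\phi_i$ by its decreasing truncation $\phi_i^k := \max(\phi_i, V_\theta - k) \in \PSH_{\mathrm{min}}(X,\theta)$ and passing to the limit using continuity of $E_\theta$ along decreasing sequences (Proposition~\ref{propma}(i)) together with the compatibility of $P_\theta$ with such monotone limits.

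Completeness would be the heart of the proof. Given a $d_{1,\theta}$-Cauchy sequence $(\phi_k)$, the Pythagorean identity yields $|E_\theta\langle\phi_j\rangle - E_\theta\langle\phi_k\rangle| \leq d_{1,\theta}(\phi_j,\phi_k)$, so the sequence $E_\theta\langle\phi_k\rangle$ is Cauchy in $\R$; in particular it is bounded. After normalising supremums to $0$, the compactness of upper level sets of $E_\theta$ in the $L^1$-topology (Proposition~\ref{propma}(iii)) furnishes a subsequence converging in $L^1$ to some $\phi \in \cE^1_0(X,\theta)$. To upgrade this to $d_{1,\theta}$-convergence of the full sequence, I would introduce the monotone envelopes $\psi_k := \bigl(\sup_{j\geq k} \phi_j\bigr)^*$, which are $\theta$-psh and decrease pointwise to $\phi$; the Pythagorean identity then splits $d_{1,\theta}(\phi_k,\phi)$ into pieces controlled by $E_\theta\langle\psi_k\rangle - E_\theta\langle\phi\rangle$ and $E_\theta\langle\psi_k\rangle - E_\theta\langle\phi_k\rangle$, both of which tend to zero by monotone continuity of $E_\theta$ combined with the Cauchy property.

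The most delicate step I anticipate is the triangle inequality for finite-energy potentials, where one must carefully track the interaction of $P_\theta$ with decreasing approximations in $\cE^1$; the foundational work of Darvas--di Nezza--Lu on rooftop envelopes in big classes is what makes this possible. A secondary subtlety is ensuring that the limit $\phi$ from the completeness argument genuinely lies in $\cE^1$ rather than merely $\PSH(X,\theta)$; this uses the upper semi-continuity of $E_\theta$ in the $L^1$-topology combined with the uniform $E_\theta$-bound along the Cauchy sequence.
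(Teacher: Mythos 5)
The paper does not prove this statement at all: it is imported verbatim from Darvas--di Nezza--Lu \cite{ddnlmetric}, so there is no internal argument to compare against, and your sketch is in effect an attempted reconstruction of the DDNL proof. The outline is the right one (explicit formula for $d_{1,\theta}$ via the rooftop envelope, triangle inequality by an envelope energy estimate, completeness via compactness of energy sublevel sets), and the elementary parts are fine: the Pythagorean splitting and the bound $|E_\theta\langle\phi_j\rangle-E_\theta\langle\phi_k\rangle|\le d_{1,\theta}(\phi_j,\phi_k)$ are immediate from the definition used in this paper. But at each of the three genuinely hard points your justification is not sufficient. For non-degeneracy, $E_\theta\langle P\rangle=E_\theta\langle\phi_0\rangle$ with $P\le\phi_0$ only gives, via the first-variation formulas, that $\phi_0=P$ almost everywhere with respect to $\MA_\theta\langle\phi_0\rangle$ (and $\MA_\theta\langle P\rangle$); to upgrade this to equality everywhere you need the domination principle (equivalently, strict monotonicity of $E_\theta$ on full-mass finite-energy potentials) in big classes, which is a theorem of Darvas--di Nezza--Lu and not a consequence of Proposition \ref{propma}. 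For the triangle inequality, your reduction to $E_\theta\langle P_\theta(\phi_0,\phi_1)\rangle+E_\theta\langle P_\theta(\phi_1,\phi_2)\rangle\le E_\theta\langle\phi_1\rangle+E_\theta\langle P_\theta(\phi_0,\phi_2)\rangle$ is correct, but ``concavity plus the extremal characterisation of rooftop envelopes'' does not prove it: what is actually needed is an envelope inequality of the form $E_\theta\langle P_\theta(u,v)\rangle+E_\theta\langle\max(u,v)\rangle\ge E_\theta\langle u\rangle+E_\theta\langle v\rangle$ (applied to $u=P_\theta(\phi_0,\phi_1)$, $v=P_\theta(\phi_1,\phi_2)$, both dominated by $\phi_1$, together with $P_\theta(u,v)=P_\theta(\phi_0,\phi_1,\phi_2)\le P_\theta(\phi_0,\phi_2)$), and the proof of such inequalities rests on the orthogonality property that $\MA_\theta\langle P_\theta(u,v)\rangle$ charges only the contact set $\{P_\theta(u,v)=u\}\cup\{P_\theta(u,v)=v\}$ --- a key technical result of \cite{ddnlmetric, ddnlfullmass} that appears nowhere in your argument.

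The completeness step has the most concrete gap. Sup-normalising each $\phi_k$ separately changes the sequence, and you must check the Cauchy property survives. More importantly, for your upper envelopes $\psi_k:=(\sup_{j\ge k}\phi_j)^*$ the two quantities you need, namely $E_\theta\langle\psi_k\rangle-E_\theta\langle\phi_k\rangle\to 0$ and $\psi_k\downarrow\phi$, do not follow from ``monotone continuity of $E_\theta$ combined with the Cauchy property'': controlling $d_{1,\theta}(\phi_k,\max(\phi_k,\phi_j))$ by $d_{1,\theta}(\phi_k,\phi_j)$ is again an envelope/orthogonality-type estimate (a contraction property of the max operator), and without it there is no reason the decreasing limit of the $\psi_k$ is the $L^1$-limit $\phi$ rather than something strictly larger. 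The DDNL argument avoids this by passing to a subsequence with $d_{1,\theta}(\phi_k,\phi_{k+1})\le 2^{-k}$ and working with the decreasing rooftop envelopes $P_\theta(\phi_k,\dots,\phi_l)$, whose energies are controlled by the summable tail; the candidate limit is then obtained as an increasing limit of these envelopes, and continuity of $E_\theta$ along monotone sequences applies legitimately. Finally, upgrading subsequential $L^1$-convergence to $d_{1,\theta}$-convergence is itself not automatic: upper semicontinuity of $E_\theta$ in the $L^1$-topology gives one inequality only, and identifying $d_{1,\theta}$-limits with ($L^1$ plus energy) limits is another nontrivial ingredient of \cite{ddnlmetric}. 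So the proposal is the right skeleton, but the load-bearing lemmas (domination principle, orthogonality of envelopes, max/rooftop contraction estimates) are exactly the content of the cited theorems and are missing from your argument.
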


It will also be important to relate this metric to geodesics in finite energy spaces.

\begin{theorem}[{\cite[Section 3.1, Theorem 3.12]{ddnlfullmass}, \cite[Proposition 3.13]{ddnlmetric}}]\label{thm_geodminsing}The space $(\cE^1(X,\theta),d_{1,\theta})$ is a geodesic metric space, i.e. given $\phi_0,\phi_1\in \cE^1(X,\theta)$, there exists a distinguished segment $t\mapsto \phi_t\in \cE^1(X,\theta)$ such that, for all $t,s\in[0,1]$,
$$d_{1,\theta}(\phi_t,\phi_s)=|t-s|d_{1,\theta}(\phi_0,\phi_1),$$
which furthermore satisfies the following properties:
\begin{enumerate}[(i)]
	\item if the endpoints belong to $\PSH_{\mathrm{min}}(X,\theta)$, then the geodesic remains in $\PSH_{\mathrm{min}}(X,\theta)$;
	\item in this case, the following estimate holds: there exists $C>0$ depending only on the endpoints, such that for all $t$, $s\in [0,1]$,
	$$\sup_X |\phi_t-\phi_s| \leq C|t-s|;$$
	\item the function $t\mapsto E_\theta\langle \phi_t\rangle$ is affine on $[0,1]$;
	\item seen as a function $\Phi$ on $X\times \cA$, with $\cA\subset \bbd$ an annulus such that $z\in \cA$ iff $-\log|z|\in [0,1]$, $\Phi$ is $\pi_X^*\theta$-psh;
	\item using the same notation, $\Phi$ satisfies the Monge-Ampère equation $$\langle(\pi_X^*\theta+dd^c\Phi)^{n+1}\rangle=0;$$
\end{enumerate}
\end{theorem}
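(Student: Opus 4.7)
The plan is to construct the candidate geodesic as a Perron-type envelope on $X \times \cA$ with prescribed boundary data, and then to verify each listed property in turn; the overall structure follows Darvas--di Nezza--Lu \cite{ddnlmetric, ddnlfullmass}, adapting Chen's original construction to the big, finite-energy setting. Given $\phi_0, \phi_1 \in \cE^1(X,\theta)$, I would define
$$\Phi(x,z) := \mathrm{usc}\sup\bigl\{\Psi \in \PSH(X\times\cA,\, \pi_X^*\theta) \,:\, \Psi(\cdot, z') \leq \phi_i \text{ for } -\log|z'| = i,\ i = 0,1\bigr\},$$
and set $\phi_t(x) := \Phi(x, e^{-t})$. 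Property (iv) is then immediate from the stability of the class $\PSH(X\times\cA, \pi_X^*\theta)$ under usc-suprema, and attainment of the boundary data follows from constructing explicit sub-barriers built out of constants and convex combinations of $\phi_0, \phi_1$.

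For properties (i) and (ii), the upper competitor $V_\theta + \max(\sup\phi_0, \sup\phi_1)$ gives $\Phi \leq V_\theta + C$ globally; combined with the lower barrier $(1-t)\phi_0 + t\phi_1 \geq V_\theta - C'$ available when both endpoints have minimal singularities, every slice stays in $\PSH_{\mathrm{min}}(X,\theta)$. The Lipschitz estimate in (ii) is a standard sliding-barrier argument: the translated competitors $\phi_s \pm |t-s|\,\sup_X|\phi_0 - \phi_1|$ bracket $\phi_t$ from above and below, giving $\sup_X|\phi_t - \phi_s| \leq C|t-s|$. For property (v), the key observation is that $\Phi$ is maximal in the Bedford--Taylor sense by its envelope characterisation; Chen's argument, adapted to the non-pluripolar formalism of \cite{begz}, then yields the vanishing $\langle(\pi_X^*\theta + dd^c\Phi)^{n+1}\rangle = 0$ on the interior $X \times \cA^{\circ}$.

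Property (iii), the affineness of $t \mapsto E_\theta\langle \phi_t\rangle$, is formally equivalent to (v) via integration by parts: differentiating $E_\theta$ twice in $t$ produces, after applying the non-pluripolar IBP formula, the fibre integral of the top-degree non-pluripolar mass of $\Phi$, which vanishes by (v). In the minimal-singularities case this computation is rigorous thanks to boundedness of the potentials (so Bedford--Taylor theory applies); for general finite-energy endpoints I would approximate via the truncations $\phi_i^{(k)} := \max(\phi_i, V_\theta - k)$, construct the associated geodesics $\phi_t^{(k)}$ (which decrease to $\phi_t$ by the comparison principle built into the envelope), and pass to the limit using the monotone continuity of $E_\theta$ from Proposition \ref{propma}(i). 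The distance identity $d_{1,\theta}(\phi_t, \phi_s) = |t-s|\, d_{1,\theta}(\phi_0, \phi_1)$ then follows from (iii): comparing the geodesic to the rooftop envelope $P_\theta(\phi_0, \phi_1)$ reduces the general case to a monotone geodesic, along which the distance is simply a difference of energies and affineness gives linearity in $t$ directly. The principal obstacle is the rigorous justification of (iii) in the finite-energy case, since non-pluripolar mass can be lost in decreasing limits; the monotone approximation framework of \cite{ddnlfullmass} is what makes the truncation limit work, after which the other assertions follow in a relatively standard fashion.
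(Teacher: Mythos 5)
The paper does not actually prove this statement---it is imported with citations from Darvas--di Nezza--Lu \cite{ddnlmetric,ddnlfullmass}---and your outline is essentially the construction of those sources: the Perron-type envelope over $X\times\cA$, affine-in-$(-\log|z|)$ barriers for boundary attainment and the Lipschitz bound, maximality of the envelope for the vanishing of the non-pluripolar Monge--Amp\`ere mass, and truncation by $\max(\phi_i,V_\theta-k)$ to pass from minimal singularities to general $\cE^1$ endpoints. Your approach is correct and matches the cited proofs; the only step sketched more thinly than in the original is the final metric-geodesic identity, where the precise tool is the Pythagorean-type formula for the rooftop envelope $P_\theta$ combined with linearity of $E_\theta$, which is the same route you indicate.
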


\section{Variational approach to Kähler-Einstein currents}

\newcommand\phike{\phi_{\mathrm{KE}}}

Throughout this section, we assume that $-K_X$ is big, and we fix $\theta$ a smooth representative of $c_1(-K_X)$.

\subsection{Canonical measures}\label{sect_canmeasures}

We explain how psh metrics on $-K_X$ relate to measures on $X$ itself, analogously to the classical situation in which $-K_X$ is ample. Our discussion is close to that of (for example) Berman in the case $-K_X$ is ample \cite[Section 2.1.3]{bermankpoly},  as he allows $X$ to be singular. That is, Berman constructs a measure on the smooth locus of $X$ and extends by zero to all of $X$, and we will construct the appropriate measure similarly.

Let $\phi$ be a psh metric on $-K_X$ with minimal singularities, and consider a local chart $U \subset X$ lying in the complement of the polar locus $\{x \in X: \phi(x)=-\infty\}$ of $\phi$; the polar locus is contained in the asymptotic base locus of $-K_X$, which is algebraic, so such a choice of $U$ is possible away from a Zariski closed subset of $X$. Denote $z_1,\ldots,z_n$  the associated local coordinates, defining a local trivialising section $$\frac{\partial}{\partial z_1}\wedge\ldots\wedge \frac{\partial}{\partial z_n}$$ and hence producing a local potential $\phi_U$ for the psh metric $\phi$. We then obtain a measure $$\mu_{\phi} = e^{-\phi_U} i^{n^2} dz_1\wedge\ldots dz_n \wedge  d\bar z_1\wedge\ldots d\bar z_n,$$ which one checks is independent of coordinates and which is defined away from the polar locus of $\phi$. We extend this measure by zero to all of $X$, and often denote the resulting measure by $e^{-\phi}$. 

 Recall that the multiplier ideal sheaf $\J(\phi) $ of $\phi$ is independent of choice of metric with minimal singularities, and as in Theorem \ref{gz-openness} we have $$\J(\phi) = \J(\|-K_X\|).$$

\begin{definition}\label{def:klt}
We say that $\phi$ (or $\theta_{\phi})$ is \emph{klt} if and only if  $$\J(\phi) = \scO_X,$$ i.e. if the associated multiplier ideal sheaf is trivial. We sometimes say that $\|-K_X\|$ is \emph{klt} under the same condition. 
\end{definition}

Note then that the klt condition is completely algebro-geometric. The main consequence of this condition is that the associated measure $e^{-\phi}$ to a klt metric $\phi$ with minimal singularities has finite volume, and it is necessary for the variety to support Kähler--Einstein metrics.

Let $L$ be a line bundle on $X$. A similar construction applies for the adjoint bundle $L+K_X$, except one needs the additional input of a section $s \in H^0(X,L+K_X)$. Thus let $s$ be such a section and let $\phi$ be a psh metric on $L$ whose polar locus is contained in a Zariski closed set. Then $|s|^2e^{-\phi}$ can be interpreted as a measure on the complement of the polar locus of $\phi$ in much the same way, and one extends by zero to all of $X$ as before (compare again Berman \cite[Section 2.1.3]{bermankpoly} in the case $-K_X$ is ample but $X$ is singular). Provided $\I(\phi) = \scO_X$, the resulting measure again has finite volume. More generally, when $L$ is $\Q$-Cartier, with $s$ a section of $r(L+K_X)$ the same construction applies by considering the measure $|s|^{2/r}e^{-\phi}$.

\subsection{Weak Kähler--Einstein currents and the Ding functional}

The key analytic objects of study in the present work are \emph{K\"ahler--Einstein metrics}. We fix $\theta \in c_1(X)$ as before.

\begin{definition}We say that $\theta + dd^c\phi$ is a \emph{K\"ahler--Einstein metric} (or sometimes more accurately a \textit{weak Kähler--Einstein current}) if $\phi\in\cE^1(X,\theta)$, and if
\begin{equation}\label{eq_kem}
\MA_\theta\langle \phi\rangle=e^{-\phi+c},
\end{equation}
where $c$ is a real (normalising) constant. 
\end{definition}

Note that by \cite{begz}, such a current must have minimal singularities. Throughout this subsection, we further assume that $-K_X$ is klt in the sense of Definition \ref{def:klt}.

\begin{definition}We define the $L$-functional on $\PSH_{\mathrm{min}}(X,\theta)$ as
$$L_\theta\langle \phi\rangle:=-\log\int_X e^{-\phi},$$
which is finite-valued by virtue of the klt condition. It also extends to $\cE^1(X,\theta)$ by \cite[Theorem 1.1]{ddnlfullmass}. We then define the \textit{Ding functional} as
$$D_\theta\langle \phi\rangle:=L_\theta\langle\phi\rangle-\vol(-K_X)\mi E_\theta\langle\phi\rangle.$$
\end{definition}
As we will see in this section, this functional detects weak Kähler--Einstein currents. We first collect a few results.

\begin{proposition}\label{prop_continuityding}The Ding functional is lsc with respect to the psh topology, and $d_1$-continuous.
\end{proposition}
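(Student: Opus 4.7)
The plan is to decompose the Ding functional as $D_\theta = L_\theta - \vol(-K_X)^{-1} E_\theta$ and establish the relevant semicontinuity properties for each component separately.

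For the lsc statement in the psh topology, the key observation is that the two summands behave asymmetrically. By Proposition \ref{propma}(iii), $E_\theta$ is usc in the $L^1$ topology on $\cE^1(X,\theta)$ (compact upper level sets), so $-\vol(-K_X)^{-1} E_\theta$ is lsc. The remaining task is to show that $L_\theta$ is in fact continuous in the $L^1$ topology. Suppose $\phi_j \to \phi$ in $L^1$ with $\phi_j, \phi \in \cE^1(X,\theta)$. Standard pluripotential-theoretic facts give that $\sup_X \phi_j$ is uniformly bounded, and, passing to a subsequence, $\phi_j \to \phi$ pointwise almost everywhere. Under the klt hypothesis on $\|-K_X\|$, a uniform Skoda-type integrability estimate (following Zeriahi) yields a uniform $L^p$ bound on $\{e^{-\phi_j}\}$ for some $p > 1$. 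Combined with a.e. convergence, Vitali's convergence theorem gives $\int_X e^{-\phi_j} \to \int_X e^{-\phi}$, and hence $L_\theta\langle \phi_j\rangle \to L_\theta\langle\phi\rangle$. Thus $D_\theta$ is a sum of a continuous and an lsc functional, hence lsc.

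For $d_1$-continuity, both components are separately $d_1$-continuous. The functional $E_\theta$ is $1$-Lipschitz in $d_{1,\theta}$: from the definition $d_{1,\theta}(\phi_0,\phi_1) = E_\theta\langle \phi_0\rangle + E_\theta\langle\phi_1\rangle - 2 E_\theta\langle P_\theta(\phi_0,\phi_1)\rangle$ together with the monotonicity $E_\theta\langle P_\theta(\phi_0,\phi_1)\rangle \leq \min(E_\theta\langle\phi_0\rangle, E_\theta\langle\phi_1\rangle)$, one immediately obtains $|E_\theta\langle\phi_0\rangle - E_\theta\langle\phi_1\rangle| \leq d_{1,\theta}(\phi_0,\phi_1)$. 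For $L_\theta$, the point is that $d_{1,\theta}$-convergence in $\cE^1(X,\theta)$ implies $L^1$-convergence with uniform energy bounds (a standard consequence of the Darvas--Di Nezza--Lu theory recalled in Theorem \ref{thm_geodminsing} and its companions), so the continuity argument of the previous paragraph applies verbatim and gives $d_{1,\theta}$-continuity of $L_\theta$. Combining, $D_\theta$ is $d_{1,\theta}$-continuous.

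The main obstacle, and the step that genuinely uses the big (rather than ample) nature of $-K_X$, is the uniform integrability needed to upgrade $\liminf \int e^{-\phi_j} \geq \int e^{-\phi}$ (which is immediate from the standard fact $\limsup \phi_j \leq \phi$ a.e.\ and Fatou) to an actual equality. In the big setting the least-singular potential $V_\theta$ may itself carry substantial singularities along the non-ample locus, so the classical Skoda bound does not apply directly; the klt hypothesis on the asymptotic multiplier ideal sheaf $\J(\|-K_X\|)$ is precisely what guarantees the uniform Skoda integrability on an $L^1$-bounded family of finite-energy $\theta$-psh functions, and this is what makes $L_\theta$ continuous (not merely usc) in the psh topology.
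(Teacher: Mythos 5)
Your overall architecture is the same as the paper's: write $D_\theta=L_\theta-\vol(-K_X)^{-1}E_\theta$, observe that $-E_\theta$ is weakly lsc (the paper quotes \cite[Proposition 2.10]{begz}; your deduction from the compactness of sup-normalised upper level sets in Proposition \ref{propma}(iii) is acceptable modulo the usual normalisation bookkeeping) and $d_1$-continuous essentially by definition, and reduce everything to continuity of $L_\theta$ under $L^1$-convergence, which then also gives the $d_1$-statement since $d_1$-convergence implies $L^1$-convergence. You also correctly identify the crux: upgrading the Fatou inequality to genuine convergence of $\int_X e^{-\phi_j}$ via uniform integrability plus Vitali.

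The gap is in how you propose to obtain that uniform integrability. A uniform Skoda estimate in the style of Zeriahi bounds $\int_X e^{-\epsilon\phi_j}$ for an exponent $\epsilon$ dictated by the supremum of the Lelong numbers over the family, and this does not produce a uniform $L^p$ bound on $e^{-\phi_j}$ with $p>1$ in the big setting. Indeed, any sup-normalised $\phi\in\cE^1(X,\theta)$ satisfies $\phi\le V_\theta$, so $\nu_x(\phi)\ge\nu_x(V_\theta)$ at every point, and along the augmented base locus the Lelong numbers of $V_\theta$ may lie well above the Skoda threshold for exponent $1$ while the klt condition still holds: klt-ness is a multiplier-ideal condition, and in the intermediate range Lelong numbers simply do not decide integrability. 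So the Lelong-number-based uniform estimate may fail to reach even exponent $1$, let alone $p>1$. What actually closes the argument is the semicontinuity theorem of Demailly--Koll\'ar \cite{demaillykollar} in its effective form (if $\phi_j\to\phi$ in $L^1$ and the complex singularity exponent of $\phi$ exceeds $1$, then $e^{-\phi_j}\to e^{-\phi}$ in $L^1$), combined with two inputs: strong openness, which upgrades $\J(V_\theta)=\scO_X$ to integrability of $e^{-pV_\theta}$ for some $p>1$, and the fact that full-mass finite-energy potentials have the same multiplier ideals as $V_\theta$ (this is also what makes $L_\theta$ well defined on $\cE^1(X,\theta)$ via \cite[Theorem 1.1]{ddnlfullmass}), so the exponent of the limit indeed exceeds $1$. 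This is exactly the route the paper takes, citing \cite{demaillykollar} as explained in \cite[Theorem 4.10]{AT}; with that substitution your Vitali argument goes through, but as written the key uniform $L^p$ bound is unjustified.
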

\begin{proof}
By the main theorem of \cite{demaillykollar} (as explained in \cite[Theorem 4.10]{AT}), we find that $L$ is weakly continuous and $d_1$-continuous. On the other hand, $-E$ is weakly lsc by \cite[Proposition 2.10]{begz} and $d_1$-continuous by definition.
\end{proof}

We also note the following immediate consequence of the main theorem of \cite{bpaun}.
\begin{theorem}\label{thm_dingconvex}The Ding functional is convex along weak geodesics in $\cE^1(X,\theta)$.
\end{theorem}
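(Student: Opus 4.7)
The plan is to decompose the Ding functional as $D_\theta\langle\phi\rangle = L_\theta\langle\phi\rangle - \vol(-K_X)^{-1} E_\theta\langle\phi\rangle$ and separately handle the two pieces along a weak geodesic $t \mapsto \phi_t$ in $\cE^1(X,\theta)$. The second piece is straightforward: Theorem \ref{thm_geodminsing}(iii) asserts that $t \mapsto E_\theta\langle\phi_t\rangle$ is affine, and an affine function is in particular convex, so the contribution from $-\vol(-K_X)^{-1} E_\theta$ is convex (in fact affine). The whole difficulty therefore reduces to proving that $t \mapsto L_\theta\langle\phi_t\rangle = -\log \int_X e^{-\phi_t}$ is convex.

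I would first establish convexity of $L_\theta$ along the geodesic under the additional assumption that the endpoints $\phi_0,\phi_1$ have minimal singularities. By Theorem \ref{thm_geodminsing}(i),(ii),(iv),(v), the geodesic $t \mapsto \phi_t$ stays in $\PSH_{\mathrm{min}}(X,\theta)$, is uniformly bounded away from $V_\theta$, and extends to an $S^1$-invariant $\pi_X^*\theta$-psh function $\Phi$ on $X \times \cA$ solving the homogeneous Monge--Amp\`ere equation $\langle(\pi_X^*\theta+dd^c\Phi)^{n+1}\rangle=0$. The key convexity theorem of Berndtsson--P\u{a}un \cite{bpaun}, applied to the psh weight $\Phi$ on the relatively big $\Q$-line bundle $-K_{(X\times\cA)/\cA}$ pulled back from $X$, implies that $z \mapsto -\log \int_X e^{-\Phi_z}$ is psh on $\cA$. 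Using $S^1$-invariance of $\Phi$ (inherited from the fact that $\Phi$ depends only on $|z|$), this subharmonic function of $z$ is in fact a convex function of $t=-\log|z|$, which yields convexity of $t\mapsto L_\theta\langle\phi_t\rangle$ on $[0,1]$.

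To pass from minimal singularity endpoints to arbitrary endpoints in $\cE^1(X,\theta)$, I would approximate $\phi_0,\phi_1$ by the minimal singularity truncations $\phi_i^k := \max(\phi_i, V_\theta - k)$, form the corresponding weak geodesics $\phi_t^k$, and use stability of geodesics under decreasing sequences in $\cE^1$ to obtain $\phi_t^k \searrow \phi_t$ (or a similar convergence compatible with the $d_1$-topology). The convexity inequality $L_\theta\langle\phi_{(1-s)t_0+st_1}^k\rangle \leq (1-s) L_\theta\langle\phi_{t_0}^k\rangle + s L_\theta\langle\phi_{t_1}^k\rangle$ established in the previous step then passes to the limit, using the $d_1$-continuity of $L_\theta$ from Proposition \ref{prop_continuityding}, together with the affine-ness of $E_\theta$ along the approximating geodesics to control their convergence. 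Combining this with the affine behaviour of $E_\theta$ on the original geodesic yields convexity of $D_\theta\langle\phi_t\rangle$.

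The main obstacle is the application of Berndtsson--P\u{a}un in the big, singular setting: the original statement is formulated for smooth fibrations and relatively positive metrics, whereas here $\Phi$ only has minimal singularities on a big class. Fortunately the endpoints have bounded relative potential with respect to $V_\theta$, so integrability is not an issue thanks to the klt assumption on $\|-K_X\|$, and the subharmonicity of the Bergman--type weight is known to extend to this generality. The secondary technical point is ensuring the approximation argument is genuinely valid at the level of weak geodesics in $\cE^1$, which I expect to follow from the contraction and stability properties of $(\cE^1(X,\theta),d_{1,\theta})$ established by Darvas--di Nezza--Lu.
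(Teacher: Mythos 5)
Your proposal follows essentially the same route as the paper, which records the statement as an immediate consequence of the main theorem of Berndtsson--P\u{a}un: $E_\theta$ is affine along the weak geodesic by Theorem \ref{thm_geodminsing}(iii), while $L_\theta\langle\phi_t\rangle=-\log\int_X e^{-\phi_t}$ is convex because $z\mapsto-\log\int_X e^{-\Phi_z}$ is psh for the $S^1$-invariant $\pi_X^*\theta$-psh weight $\Phi$. Your approximation detour through minimal-singularity endpoints is unnecessary, since items (iii)--(v) of Theorem \ref{thm_geodminsing} hold for arbitrary endpoints in $\cE^1(X,\theta)$ (only (i)--(ii) require minimal singularities), so Berndtsson--P\u{a}un applies directly to the weak geodesic, with finiteness of $\int_X e^{-\phi_t}$ guaranteed by the klt assumption.
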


\begin{remark}
In the case $-K_X$ is ample, \emph{strict} convexity in the absence of holomorphic vector fields is proven by Berndtsson \cite{bob}, but is an open problem in the big setting. 
\end{remark}

The following result has been proven independently by Darvas-Zhang in \cite[Proposition 5.4]{darvaszhang}.

\begin{theorem}\label{thm_dingmin}Assume there exists a weak Kähler--Einstein current in $c_1(-K_X)$. Then, given $\phi\in\PSH_{\mathrm{min}}(X,\theta)$, $\theta+dd^c\phi$ is weak Kähler--Einstein if and only if $\phi$ minimises the Ding functional on $\cE^1(X,\theta)$.
\end{theorem}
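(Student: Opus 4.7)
The plan is to use the convexity of the Ding functional along weak geodesics in $\cE^1(X,\theta)$ from Theorem \ref{thm_dingconvex}, combined with the affinity of $E_\theta$ along such geodesics from Theorem \ref{thm_geodminsing}(iii). Throughout, write $V:=\vol(-K_X)$.

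For the direction ``K\"ahler--Einstein implies minimiser,'' suppose $\theta+dd^c\phi_{KE}$ is weak K\"ahler--Einstein, so $\MA_\theta\langle\phi_{KE}\rangle=e^{-\phi_{KE}+c}$; integrating gives $\int_X e^{-\phi_{KE}}=Ve^{-c}$, and hence the probability measure $\mu_{KE}:=e^{-\phi_{KE}}/\int_X e^{-\phi_{KE}}$ coincides with $V^{-1}\MA_\theta\langle\phi_{KE}\rangle$. Given any $\psi\in\cE^1(X,\theta)$, I would take the weak geodesic $\phi_t$ from $\phi_{KE}$ to $\psi$ provided by Theorem \ref{thm_geodminsing}; convexity of $t\mapsto D_\theta\langle\phi_t\rangle$ reduces the claim $D_\theta\langle\psi\rangle\geq D_\theta\langle\phi_{KE}\rangle$ to showing the right derivative $D_\theta'(0^+)\geq 0$. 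Affinity of $E_\theta$ identifies $E_\theta'(0^+)$ with $\int_X\dot\phi_0\,\MA_\theta\langle\phi_{KE}\rangle$, while the first-variation formula for $L_\theta$---valid under the klt hypothesis via dominated convergence---gives $L_\theta'(0^+)=\int_X\dot\phi_0\,\mu_{KE}$. Invoking the K\"ahler--Einstein equation, the two quantities differ only by a factor of $V^{-1}$, so $D_\theta'(0^+)=L_\theta'(0^+)-V^{-1}E_\theta'(0^+)=0$, giving the desired inequality.

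For the direction ``minimiser implies K\"ahler--Einstein,'' let $\phi\in\PSH_{\min}(X,\theta)$ minimise $D_\theta$. I would extract the Euler--Lagrange equation by two complementary approaches. First, a direct variational argument: for any smooth real $\chi$ compactly supported in the ample locus $\Omega=X\setminus\bbb_+(-K_X)$, the perturbation $\phi+s\chi$ stays in $\PSH_{\min}(X,\theta)\subset\cE^1(X,\theta)$ for $|s|$ small (after an envelope projection as in \cite{bbgz} if needed to control positivity of $\theta+dd^c(\phi+s\chi)$ near singularities of $\theta_\phi$); minimality forces
$$0=\frac{d}{ds}\bigg|_{s=0}D_\theta\langle\phi+s\chi\rangle=\int_X\chi\left(\mu_\phi-V^{-1}\MA_\theta\langle\phi\rangle\right),$$
and as $\chi$ ranges over a dense subset of continuous functions on $\Omega$ (both measures ignoring the pluripolar complement), this yields $\MA_\theta\langle\phi\rangle=V\mu_\phi=e^{-\phi+c'}$. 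Alternatively, exploit the existence hypothesis: by the forward direction the assumed $\phi_{KE}$ is also a minimiser, so $D_\theta\langle\phi\rangle=D_\theta\langle\phi_{KE}\rangle$; the weak geodesic between them remains in $\PSH_{\min}$ by Theorem \ref{thm_geodminsing}(i), convexity and minimality force $D_\theta$ to be constant along it, and combining with affinity of $E_\theta$ forces $L_\theta$ to be affine along the geodesic, from which the equality case in Berndtsson--P\u{a}un's convexity yields the K\"ahler--Einstein property of $\phi$.

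The main obstacle is the reverse direction. The local perturbation approach requires $\theta_\phi$ to dominate a fixed positive multiple of a K\"ahler form on $\mathrm{supp}(\chi)$, which is nontrivial for a general minimum-singularity representative in the big setting and may demand the envelope-projection machinery of \cite{bbgz} with its more involved first-variation analysis. The comparison approach hinges on the equality case in Berndtsson--P\u{a}un's convexity of $L_\theta$ along weak geodesics; as flagged in the remark after Theorem \ref{thm_dingconvex}, strict convexity in the big setting is an open problem, so establishing the needed rigidity requires adapting Berndtsson's theorem \cite{bob}---originally proved in the ample case---to the singular pluripotential-theoretic setting of big classes, controlling in particular the second variation of $L_\theta$ near the singular loci of the potentials.
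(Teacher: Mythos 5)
Your forward direction (``K\"ahler--Einstein implies minimiser'') follows essentially the paper's route: reduce to endpoints with minimal singularities, run the weak geodesic, use convexity of $D_\theta$ from Theorem \ref{thm_dingconvex} to reduce to the sign of the derivative at $t=0$, and compute that derivative using Theorem \ref{thm_geodminsing}. One caveat: your claim that affinity of $E_\theta$ ``identifies'' $E_\theta'(0^+)$ with $\int_X\dot\phi_0\,\MA_\theta\langle\phi_{KE}\rangle$ is not justified as stated; affinity alone gives no such identification. The paper avoids this by using only the one-sided bound coming from concavity of $E_\theta$ together with monotone convergence (the difference quotients $u_t=t^{-1}(\phi_t-\phi_{KE})$ decrease), which, because $E_\theta$ enters $D_\theta$ with a negative sign, is exactly the inequality needed; you should do the same, and also record the reduction to $\psi\in\PSH_{\mathrm{min}}(X,\theta)$ before invoking the uniform bound $|\phi_{KE}-\phi_t|\leq Ct$ needed for dominated convergence in the $L_\theta$ term.

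The genuine gap is the reverse direction. The paper does not reprove it: it invokes \cite[Theorem 4.22]{ddnlmonotonicity} (noting that $V_\theta$ has small unbounded locus), and neither of your two sketches closes the distance to that result. Your perturbation argument fails as literally written: for a big class, a potential $\phi$ with minimal singularities need not dominate any K\"ahler form on any open subset of the ample locus, so $\phi+s\chi$ is in general not $\theta$-psh for any $s\neq 0$; the correct route is to perturb and then project, i.e.\ to differentiate $s\mapsto E_\theta\langle P_\theta(\phi+s\chi)\rangle$, and the required differentiability/orthogonality theorem for envelopes in big classes is precisely the technical content of the reference the paper cites --- so you would have to either prove that or cite it, and your proposal does neither. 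Your second route is worse off: deducing the K\"ahler--Einstein equation from $D_\theta$ being constant along the geodesic joining $\phi$ to $\phi_{KE}$ requires the equality case (a rigidity statement) in the Berndtsson--P\u{a}un convexity of $L_\theta$, and, as the paper's remark after Theorem \ref{thm_dingconvex} points out, strict convexity in the big setting is an open problem; without such rigidity, constancy of $D_\theta$ along the geodesic tells you nothing about $\phi$ itself. So as it stands the reverse implication is unproved in your write-up; the minimal fix is to argue as in the variational approach of \cite{bbgz} via envelopes and to quote \cite[Theorem 4.22]{ddnlmonotonicity} for the differentiability step, as the paper does.
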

\begin{proof}
We follow the corresponding argument in the case $-K_X$ is ample \cite[Theorem 6.6]{bbgz}. Assume that $\phi=\phi_{\mathrm{KE}}$ is the weak Kähler--Einstein metric in $c_1(-K_X)$. We want to show that $D_\theta\langle \phike\rangle \leq D_\theta\langle \psi\rangle$ for all $\psi\in\cE^1(X,\theta)$; by approximation and continuity of $D$ along decreasing sequences we may well assume that $\psi\in\PSH_{\mathrm{min}}(\theta)$. Since $D_\theta\langle \psi+c\rangle=D_\theta\langle \psi\rangle$ for all $\psi$ and $c$, we may also assume that $c=0$ in the Monge--Ampère equation \eqref{eq_kem}. Let $t\mapsto \phi_t$ be the weak geodesic with $\phi_0=\phike$ and $\phi_1=\psi$. Since $D$ is convex along geodesics by Theorem  \ref{thm_dingconvex}, it suffices to show that the derivative of $t\mapsto D_\theta\langle \phi_t\rangle$ at zero is nonnegative. Since both $\phike$ and $\psi$ have minimal singularities, the sequence of functions
$$u_t=t\mi(\phi_t-\phike)\leq |\phike-\psi|$$
decreases as $t\to 0$ to some function $u$ bounded above on $X$ by $|\phike-\psi|$. By concavity of $E$ and monotone convergence, we find
$$\frac{d}{dt}_{|t=0}E_\theta\langle \phi_t\rangle\leq \int_X u\,\MA\langle \phike\rangle=\int_X u\,e^{-\phike}.$$
One also has that
$$t\mi\left(\int_X e^{-\phi_t}-\int_X e^{-\phike}\right)=-\int_X u_t \frac{1-e^{\phike-\phi_t}}{\phi_t-\phike}e^{-\phike},$$
but $f(x):=x\mi (1-e^{-x})$ is continuous and $\phike-\phi_t$ is uniformly bounded on $X\times[0,1]$ by Theorem \ref{thm_geodminsing}, so that by dominated convergence we find on taking $t\to 0$ that
$$\frac{d}{dt}_{|t=0}\int_X e^{-\phi_t}=-\int_X u\,e^{-\phike}.$$
Since $D=L-E$, this together with the chain rule proves that the derivative of $D$ at zero is negative, hence the inequality $D_\theta\langle \phike\rangle \leq D_\theta\langle \psi\rangle$. The converse is proven in \cite[Theorem 4.22]{ddnlmonotonicity} (note that $V_\theta$ has small unbounded locus).
\end{proof}

\subsection{Existence of a weak Kähler--Einstein current implies analytic coercivity}

To prove the main result of this section, we will need a different functional, the \textit{$J$-energy}:
\begin{definition}For $\phi\in\PSH_{\mathrm{min}}(X,\theta)$, we define the \emph{$J$-energy}
\begin{align*}
J_\theta\langle \phi\rangle:&=\llangle \phi, V_\theta^n\rrangle-\llangle V_\theta^{n+1}\rrangle - E_\theta\langle\phi\rangle\\
&=\int_X (\phi-V_\theta)\MA_\theta\langle V_\theta\rangle-E_\theta\langle \phi\rangle.
\end{align*}

\end{definition}

Again, one sees the term $(\phi-V_\theta)\MA_\theta\langle V_\theta\rangle$ in  this functional is decreasing along pointwise decreasing sequences, while $E$ is already defined on $\cE^1$. Thus the $J$-energy extends to $\cE^1(X,\theta)$ as well, as $\MA_\theta(V_\theta)=1_{V_\theta\equiv 0}\theta^n$, so that $\theta$-psh functions are $\MA_\theta(V_\theta)$-integrable.

For reference we also introduce the $I$ and $I-J$-energies, which also admit natural interpretations as Deligne functionals.

\begin{definition} We define the $I$\emph{-energy} to be $$I_{\theta}\langle\phi\rangle = \int_X (\phi - V_{\theta})(\MA_{\theta}\langle V_{\theta}\rangle-\MA_{\theta}\langle\phi\rangle),$$ and set $(I_\theta-J_\theta)\langle\phi\rangle = I_\theta\langle\phi\rangle - J_\theta\langle\phi\rangle.$
\end{definition}

These energies are equivalent in the following sense:

\begin{lemma}[{\cite[Section 2.1]{bbgz}}]\label{IJ-uniform}
There are uniform bounds \begin{align*}J_\theta\langle\phi\rangle &\leq I_\theta\langle\phi\rangle \leq (n+1)J_\theta\langle\phi\rangle, \\ n^{-1}J_\theta\langle\phi\rangle &\leq I_\theta\langle\phi\rangle - J_\theta\langle\phi\rangle \leq nJ_\theta\langle\phi\rangle. \end{align*}
\end{lemma}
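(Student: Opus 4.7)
The plan is to reduce everything to a non-negative linear combination of ``squared gradient'' integrals, following the standard strategy in the ample setting and relying on the non-pluripolar integration-by-parts formula stated earlier. I would first reduce to the case $\phi\in\PSH_{\mathrm{min}}(X,\theta)$: by the monotone convergence properties of $E_\theta$, $I_\theta$, and $J_\theta$ along the approximants $\phi_j := \max(\phi, V_\theta - j)$, any inequalities holding on $\PSH_{\mathrm{min}}$ propagate to all of $\cE^1(X,\theta)$. Once on $\PSH_{\mathrm{min}}(X,\theta)$, the function $u:=\phi-V_\theta$ is bounded, legitimising the computations that follow.

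Writing $T_j := \langle \theta_\phi^{n-j}\wedge\theta_{V_\theta}^{j}\rangle$ for $j=0,\dots,n$ and unpacking the definition of the positive Deligne pairing yields
$$(n+1)E_\theta\langle\phi\rangle \;=\; \llangle\phi^{n+1}\rrangle - \llangle V_\theta^{n+1}\rrangle \;=\; \sum_{j=0}^{n}\int_X u\,T_j,$$
so that
$$J_\theta\langle\phi\rangle \;=\; \frac{1}{n+1}\sum_{j=0}^{n}\int_X u\,(T_n - T_j), \qquad I_\theta\langle\phi\rangle \;=\; \int_X u\,(T_n - T_0).$$
Telescoping via $\theta_\phi - \theta_{V_\theta} = dd^c u$ gives
$$T_n - T_j \;=\; -\sum_{k=0}^{n-j-1}\theta_{V_\theta}^{n-k-1}\wedge\theta_\phi^{k}\wedge dd^c u,$$
and applying the non-pluripolar integration-by-parts formula to each summand produces
$$\int_X u\,(T_n - T_j) \;=\; \sum_{k=0}^{n-j-1} b_k, \qquad b_k \;:=\; \int_X du\wedge d^c u\wedge\theta_{V_\theta}^{n-k-1}\wedge\theta_\phi^{k} \;\ge\; 0,$$
non-negativity following from $du\wedge d^c u$ being a positive $(1,1)$-form wedged with a positive closed current.

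Rearranging the resulting double sums yields the clean formulas
$$J_\theta\langle\phi\rangle = \frac{1}{n+1}\sum_{k=0}^{n-1}(n-k)\,b_k, \quad I_\theta\langle\phi\rangle = \sum_{k=0}^{n-1} b_k, \quad (I_\theta-J_\theta)\langle\phi\rangle = \frac{1}{n+1}\sum_{k=0}^{n-1}(k+1)\,b_k,$$
at which point the four inequalities collapse into the coefficient-wise comparisons $(n-k) \le n+1$, $1 \le n-k$, $(n-k)\le n(k+1)$, and $k+1\le n(n-k)$, all valid for $0\le k\le n-1$ and purely elementary.

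The only technical point worth flagging is the legitimacy of the non-pluripolar integration-by-parts formula, which is the precise reason for the initial reduction to minimal singularities; there, $u$ is bounded and the hypotheses of the formula recalled in the preliminaries are automatically met. Beyond that, everything is bookkeeping: the hard work is already done by the theory of positive Deligne pairings set up earlier.
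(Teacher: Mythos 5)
Your proof is correct: the paper gives no argument for this lemma, simply citing \cite[Section 2.1]{bbgz}, and your reduction to $\phi\in\PSH_{\mathrm{min}}(X,\theta)$, the telescoping decomposition of $I_\theta$, $J_\theta$ and $I_\theta-J_\theta$ into the non-negative terms $b_k=\int_X du\wedge d^c u\wedge\theta_{V_\theta}^{n-k-1}\wedge\theta_\phi^{k}$, and the elementary coefficient comparison is exactly the standard argument of that reference, transported to the big setting with $V_\theta$ in place of a smooth reference potential. The only point worth flagging is that the integration-by-parts statement recalled in the paper's preliminaries is only the symmetry form, while you use the identity $\int_X u\,dd^c u\wedge\Theta=-\int_X du\wedge d^c u\wedge\Theta$ for the bounded function $u=\phi-V_\theta$; this is part of the full non-pluripolar integration-by-parts theorem in the cited sources, so with your reduction to minimal singularities every step is legitimate.
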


These functionals allow us define \emph{coercivity} of energy functionals.

\begin{definition}We say that $D$ is \textit{$d_1$-coercive} if there exists constants $\delta>0$, $C$ such that for all $\phi\in\cE^1_{0}(X,\theta)$, we have
$$D_\theta\langle \phi\rangle \geq \delta\cdot d_{1,\theta}(\phi,V_\theta)-C;$$
we say that it is \textit{$J$-coercive} if there exist $\delta>0$ and $C$ such that for all $\phi\in\cE^1_{0}(X,\theta)$, we have
$$D_\theta\langle \phi\rangle \geq \delta\cdot J_\theta\langle\phi\rangle-C.$$
\end{definition}

\begin{proposition}\label{prop_jd1comparison}$D$ is $J$-coercive if and only if it is  $d_1$-coercive.
\end{proposition}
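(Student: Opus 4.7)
The plan is to prove the stronger quantitative comparison
\begin{equation*}
J_\theta\langle\phi\rangle \leq d_{1,\theta}(\phi, V_\theta) \leq J_\theta\langle\phi\rangle + C
\end{equation*}
on the normalized space $\cE^1_0(X,\theta)$, where $C$ depends only on $(X, \theta, \omega)$. Once this is established, the two coercivity properties for $D_\theta$ are equivalent almost by definition: if $D_\theta \geq \delta\cdot d_{1,\theta}(\cdot, V_\theta) - C_0$ then $D_\theta \geq \delta\cdot J_\theta - C_0$ by the left inequality; conversely if $D_\theta \geq \delta\cdot J_\theta - C_0$ then $D_\theta \geq \delta\cdot d_{1,\theta}(\cdot, V_\theta) - (\delta C + C_0)$ by the right inequality.

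For the lower bound, note that any $\phi \in \cE^1_0(X,\theta)$ satisfies $\phi \leq V_\theta$ pointwise, because $\sup\phi = 0$ makes $\phi$ a competitor in the supremum defining $V_\theta$. Consequently $d_{1,\theta}(\phi, V_\theta) = -E_\theta\langle\phi\rangle$ (using also $E_\theta\langle V_\theta\rangle = 0$, which is immediate from the definition), and comparing with the definition of $J_\theta$ gives
\begin{equation*}
d_{1,\theta}(\phi, V_\theta) - J_\theta\langle\phi\rangle = \int_X (V_\theta - \phi)\,\MA_\theta\langle V_\theta\rangle,
\end{equation*}
which is nonnegative since $V_\theta - \phi \geq 0$ and the Monge--Amp\`ere measure is positive.

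For the upper bound the key input is the identity $\MA_\theta\langle V_\theta\rangle = 1_{\{V_\theta \equiv 0\}}\theta^n$ already recorded just below the definition of $J_\theta$. The contact set $\{V_\theta \equiv 0\}$ is an open subset of the ample locus on which $V_\theta$ is identically zero, hence harmonic, so $\theta = \theta + dd^c V_\theta \geq 0$ pointwise there; thus $\MA_\theta\langle V_\theta\rangle$ is a positive measure dominated by $C_1\,\omega^n$, with $C_1$ depending only on $(\theta, \omega)$. Therefore
\begin{equation*}
\int_X (V_\theta - \phi)\,\MA_\theta\langle V_\theta\rangle = \int_X (-\phi)\,\MA_\theta\langle V_\theta\rangle \leq C_1 \int_X (-\phi)\,\omega^n,
\end{equation*}
and the final integral is uniformly bounded via a standard Hartogs-type $L^1$ estimate: picking $C_2$ large enough that $C_2\omega - \theta$ is a positive smooth form yields the inclusion $\PSH(X,\theta) \subset \PSH(X,C_2\omega)$, and any $C_2\omega$-psh function normalized by $\sup\phi = 0$ satisfies $\int_X (-\phi)\,\omega^n \leq C'$ for $C'$ depending only on $(X,\omega, C_2)$.

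No real obstacle is expected beyond carefully invoking the identity $\MA_\theta\langle V_\theta\rangle = 1_{\{V_\theta \equiv 0\}}\theta^n$, a standard result of Berman--Boucksom already taken for granted in the surrounding text; the comparison of $J$ and $d_1$ thereby reduces to a one-line Hartogs argument, and the coercivity equivalence follows at once.
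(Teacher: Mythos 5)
Your argument is correct and follows essentially the same route as the paper's proof: both reduce to the two-sided comparison of $J_\theta$ and $d_{1,\theta}(\cdot,V_\theta)$ on $\cE^1_0(X,\theta)$ via $\phi\leq V_\theta$, the identity $d_{1,\theta}(\phi,V_\theta)=-E_\theta\langle\phi\rangle$, the contact-set formula $\MA_\theta\langle V_\theta\rangle=1_{\{V_\theta=0\}}\theta^n$, and a uniform $L^1$ bound on sup-normalized $\theta$-psh functions (the paper phrases this last step as weak compactness of $\cE^1_0$). One small caveat: the contact set $\{V_\theta=0\}$ is closed, not open, so the ``harmonic, hence $\theta\geq 0$ there'' remark is not justified as stated---but it is also unnecessary, since $1_{\{V_\theta=0\}}\theta^n\leq C_1\,\omega^n$ follows directly from smoothness of $\theta$ on the compact $X$ together with positivity of the Monge--Amp\`ere measure.
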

\begin{proof}
It suffices to extend to the big case the estimate from \cite[Proposition 5.5]{darvasrubinstein}, i.e. to show that there exists $C>0$ with
$$d_{1,\theta}(\phi,V_\theta)-C\leq J_\theta\langle \phi\rangle \leq d_{1,\theta}(\phi,V_\theta)+C$$
for all $\phi\in\cE^1_{0}(X,\theta)$. We proceed as in \cite[Lemma 3.7]{AT}. By definition of $V_\theta$, we have $V_\theta\geq \phi$ on $\cE^1_{0}(X,\theta)$, hence
$$d_{1,\theta}(\phi,V_\theta)=-E_\theta\langle \phi\rangle\geq J_\theta\langle \phi\rangle.$$
For the reverse estimate, we use \cite[Corollary 3.4 $(i)$]{dntcontact} (setting $f=0$ in their notation), which gives $$\MA_\theta\langle V_\theta\rangle=(\theta
^n)|_{\{V_\theta=0\}},$$

hence
$$0\leq \int_X (\phi-V_\theta)\MA\langle V_\theta\rangle \leq \int_X |-\phi|\,(\theta^n).$$ By weak compactness of $\cE^1_0(X,\theta)$, the right-hand side is uniformly bounded by some constant $C$, hence using again the fact that $d_{1,\theta}(\phi,V_\theta)=-E_\theta\langle \phi\rangle$, we find that $-C+d_{1,\theta}(\phi,V_\theta)\leq J_\theta\langle\phi\rangle$, thus proving the left-hand estimate. 
\end{proof}

Of course, from Lemma \ref{IJ-uniform} it also follows that $d_1$-coercivity is equivalent to $I$ or $I-J$-coercivity. We now prove the main theorem of this section.

\begin{theorem}\label{thm_dingcoercive}Assume that $-K_X$ is big. If there exists a unique weak Kähler--Einstein current $\theta+dd^c\phi_{\mathrm{KE}}$ in $c_1(-K_X)$, then the Ding functional $D$ is $J$-coercive on $\cE^1_0(X,\theta)$.
\end{theorem}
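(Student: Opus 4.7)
The plan is to run a contradiction argument in the spirit of the analogous coercivity result of Darvas--Rubinstein \cite{darvasrubinstein} in the ample case. The four key ingredients, all established above, are: convexity of $D$ along weak geodesics (Theorem \ref{thm_dingconvex}); the variational characterization of the Kähler--Einstein current as the unique global minimizer of $D$ (Theorem \ref{thm_dingmin}); compactness of upper level sets of $E_\theta$ in the psh ($L^1$) topology (Proposition \ref{propma}(iii)); and the Darvas--di Nezza--Lu characterization of $d_1$-convergence on $\cE^1(X,\theta)$ as $L^1$-convergence together with convergence of the Monge--Ampère energies.

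Assuming $D$ fails to be $J$-coercive, Proposition \ref{prop_jd1comparison} produces a sequence $\phi_j\in\cE^1_0(X,\theta)$ with $\ell_j:=d_{1,\theta}(\phi_{\mathrm{KE}},\phi_j)\to\infty$ and $(D_\theta\langle\phi_j\rangle-D_\theta\langle\phi_{\mathrm{KE}}\rangle)/\ell_j\to 0$, after normalizing $\phi_{\mathrm{KE}}$ by an additive constant. Theorem \ref{thm_geodminsing} provides unit-speed $d_1$-geodesics $\gamma_j:[0,\ell_j]\to\cE^1(X,\theta)$ from $\phi_{\mathrm{KE}}$ to $\phi_j$, and I set $\psi_j:=\gamma_j(1)$, so that $d_{1,\theta}(\psi_j,\phi_{\mathrm{KE}})=1$. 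Convexity of $D$ along $\gamma_j$ together with the minimization property at $\phi_{\mathrm{KE}}$ yields
\[
0 \leq D_\theta\langle\psi_j\rangle - D_\theta\langle\phi_{\mathrm{KE}}\rangle \leq \tfrac{1}{\ell_j}\bigl(D_\theta\langle\phi_j\rangle - D_\theta\langle\phi_{\mathrm{KE}}\rangle\bigr) \longrightarrow 0,
\]
so $\{\psi_j\}$ is a minimizing sequence for $D$ at unit $d_1$-distance from $\phi_{\mathrm{KE}}$.

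The triangle inequality bounds $d_{1,\theta}(\psi_j,V_\theta)$ uniformly, so Proposition \ref{prop_jd1comparison} and Lemma \ref{IJ-uniform} give a uniform bound on $-E_\theta\langle\psi_j\rangle$ after sup-normalization. Proposition \ref{propma}(iii) then extracts an $L^1$-convergent subsequence $\psi_j\to\psi_\infty\in\cE^1_0(X,\theta)$. Lower semicontinuity of $D$ (Proposition \ref{prop_continuityding}) forces $D_\theta\langle\psi_\infty\rangle\leq D_\theta\langle\phi_{\mathrm{KE}}\rangle$, so $\psi_\infty$ is itself a minimizer, hence by Theorem \ref{thm_dingmin} another weak Kähler--Einstein current; the uniqueness assumption then gives $\psi_\infty=\phi_{\mathrm{KE}}$. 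Weak continuity of $L_\theta$ (from the proof of Proposition \ref{prop_continuityding}) yields $L_\theta\langle\psi_j\rangle\to L_\theta\langle\phi_{\mathrm{KE}}\rangle$; combined with $D_\theta\langle\psi_j\rangle\to D_\theta\langle\phi_{\mathrm{KE}}\rangle$, this forces $E_\theta\langle\psi_j\rangle\to E_\theta\langle\phi_{\mathrm{KE}}\rangle$. The Darvas--di Nezza--Lu criterion then upgrades the $L^1$-convergence $\psi_j\to\phi_{\mathrm{KE}}$ to $d_{1,\theta}(\psi_j,\phi_{\mathrm{KE}})\to 0$, contradicting $d_{1,\theta}(\psi_j,\phi_{\mathrm{KE}})=1$.

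The main obstacle I anticipate is the careful bookkeeping of additive constants: $d_1$ is not translation-invariant while $D$ is, and the geodesic endpoint $\psi_j=\gamma_j(1)$ is not automatically sup-normalized, so the compactness step requires tracking both $\psi_j$ and its shift into $\cE^1_0$. A secondary point is verifying that the Darvas--di Nezza--Lu characterization of $d_1$-convergence is available in the big setting in the exact form needed; if not, it should be replaceable by a direct argument via the concavity of $E$ along geodesics and the affineness statement in Theorem \ref{thm_geodminsing}(iii). I note that if strict convexity of $D$ along geodesics were available (open in the big setting per the remark after Theorem \ref{thm_dingconvex}), one could alternatively extract a non-trivial weak geodesic ray from $\phi_{\mathrm{KE}}$ with vanishing Ding slope and invoke strict convexity to force it to be constant, shortening the final step.
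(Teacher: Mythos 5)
Your proposal is correct and follows essentially the same argument as the paper: the paper proves the key quantity $a=\inf\{(D_\theta\langle\phi\rangle-D_\theta\langle\phi_{\mathrm{KE}}\rangle)/d_{1,\theta}(\phi,\phi_{\mathrm{KE}})\}$ is positive by exactly your contradiction scheme (pass to the point at unit distance along the geodesic, use convexity of $D$, minimisation via Theorem \ref{thm_dingmin}, weak compactness of energy level sets, lower semicontinuity of $D$, uniqueness, and continuity of $L$ to upgrade psh convergence to $d_1$-convergence), then converts $a>0$ into the coercivity estimate via Proposition \ref{prop_jd1comparison}. Your contrapositive packaging, including the routine check that failure of coercivity produces a sequence with $\ell_j\to\infty$ and vanishing Ding-to-distance ratio (using that $\phi_{\mathrm{KE}}$ minimises $D$), and your flagged normalisation bookkeeping are consistent with how the paper handles these points.
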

\begin{proof}As in the proof \cite[Theorem D]{AT}, we consider
$$a:=\inf\{(D_\theta\langle \phi\rangle-D_\theta\langle\phi_{\mathrm{KE}}\rangle)/d_{1,\theta}(\phi,\phi_{\mathrm{KE}}),\,\phi\in\PSH_{\mathrm{min},0}(X,\theta),\,d_{1,\theta}(\phi,\phi_{\mathrm{KE}})\geq 1\},$$
and we first show that $a>0$. 

Assuming $a=0$, we can find a sequence of metrics $\phi_k\in\cE^1_{0}(X,\theta)$ such that $$(D_\theta\langle \phi_k\rangle-D_\theta\langle\phi_{\mathrm{KE}}\rangle)/d_{1,\theta}(\phi_k,\phi_{\mathrm{KE}})\to 0$$ and $$d_{1,\theta}(\phi_k,\phi_{\mathrm{KE}})\geq 1.$$ If we let $t\mapsto \phi_{t,k}$ be the unit speed geodesic parametrised by $[0,d_{1,\theta}(\phi_k,\phi_{\mathrm{KE}})]$ with $\phi_{0,k}=\phi_{\mathrm{KE}}$ and $\phi_{d_{1,\theta}(\phi_k,\phi_{\mathrm{KE}}),k}=\phi_k$, and $\psi_k:=\phi_{1,k}$, then by convexity of the Ding functional we find that $D_\theta\langle \psi_k\rangle \to_{k\to\infty} D_\theta\langle \phi_{\mathrm{KE}} \rangle$. 

Because by definition $d_{1,\theta}(\psi_k,\phi_{\mathrm{KE}})=1$, we also have that the family $(\psi_k)_k$ is contained in a (sup-normalised) upper level set of the Monge-Ampère energy. By weak compactness of such level sets (Proposition \ref{propma}), we can assume up to passing to a subsequence that $\psi_k\to_{k\to\infty}\psi\in\cE^1_{0}(X,\theta)$ in the psh topology. By lower semicontinuity of $D$, this together with the above convergence implies $D_\theta\langle\psi\rangle\leq D_\theta\langle \phi_{\mathrm{KE}}\rangle$. Since $\phi_{\mathrm{KE}}$ is Kähler--Einstein, it minimises $D$ by Theorem \ref{thm_dingmin}, hence $\psi=\phi_{\mathrm{KE}}$. Since the $L$-part of the Ding energy is continuous with respect to the psh topology, we have in particular that $E_\theta\langle \psi_k\rangle\to E_\theta\langle \phi_{\mathrm{KE}}\rangle$, hence $d_{1,\theta}(\psi_k,\phi_{\mathrm{KE}})\to 0$ (since the $d_1$-topology is equivalent to the psh topology together with convergence in energy) which contradicts the assumption that $d_{1,\theta}(\psi_k,\phi_{\mathrm{KE}})\geq 1$, thereby ensuring that $a>0$.

 Assuming now $a>0$ one sees that, if $\phi\in\cE^1_{0}(X,\theta)$ satisfies $d_{1,\theta}(\phi,\phi_{\mathrm{KE}})\leq 1$ then
\begin{align*}
&a\cdot d_{1,\theta}(\phi,V_\theta)-a\cdot\sup\{d_{1,\theta}(\phi',V_\theta),\,d_{1,\theta}(\phi',\phi_{\mathrm{KE}})\leq 1\}+D_\theta\langle \phi_{\mathrm{KE}}\rangle\leq D_\theta\langle\phi_{\mathrm{KE}}\rangle\leq D_\theta\langle \phi\rangle
\end{align*}
(where the second inequality follows again from the fact that Kähler--Einstein metrics minimise $D$). On the other hand, by definition of $a$, if $\phi\in\PSH_{\mathrm{min},0}(X,\theta)$ satisfies $d_{1,\theta}(\phi,\phi_{\mathrm{KE}})\geq 1$ then
$$a\cdot (d_{1,\theta}(\phi,V_\theta)-d_{1,\theta}(\phike,V_\theta))+D_\theta\langle \phi_{\mathrm{KE}}\rangle \leq D_\theta\langle \phi\rangle,$$
and this extends to $\cE^1_0(X,\theta)$ by approximation as usual; hence there exists $C$ such that
$$a\cdot d_{1,\theta}(\phi,V_\theta)-C\leq D_\theta\langle \phi\rangle$$
on $\cE^1_{0}(X,\theta)$. \end{proof}

\begin{remark}
We expect that K\"ahler--Einstein metrics are unique precisely when $\Aut(X)$ is finite. To justify this expectation, note firstly that if $\theta_{\phi}$ has minimal singularities in $c_1(X)$, then for $g \in \Aut(X)$, the pullback $g^*\theta_{\phi}$ has minimal singularities by \cite[Remark 3.7]{rwn}. Further, if $\theta_{\phi}$ is K\"ahler--Einstein, by then working away from the polar locus, it is clear that $g^*\theta_{\phi}$ is also  K\"ahler--Einstein. Thus uniqueness of  K\"ahler--Einstein metrics implies $\Aut(X)$ is finite.
\end{remark}

\section{Uniform Ding stability}\label{sec:dingstability}

\subsection{Ding stability}The goal of the present section is to introduce a notion of \emph{uniform Ding stability} for a normal projective variety $X$ with big anticanonical class $-K_X$. The notion of uniform Ding stability involves numerical invariants associated to \emph{test configurations}; since the notion of a test configuration makes sense and is of interest more generally, we begin by considering a normal projective variety $X$ along with a big $\bbq$-line bundle $L$ on $X$. 

Rather than a fixed $(X,L)$, we consider an equivalence class of such varieties in the following sense.

\begin{definition}\label{def:birational-equivalence} Let $(X,L)$ and $(Y,L_Y)$ be normal projective varieties endowed with big $\Q$-line bundles. We say that $(X,L)$ and $(Y,L_Y)$ are \emph{birationally equivalent} if there is a normal projective variety $Z$ along with a big $\Q$-line bundle $L_Z$ together with birational morphisms $p_X:Z \to X$ and $p_Y: Z \to Y$ such that $$p_{X*} L_Z = L \textrm{ and } p_{Y*} L_Z = L_Y,$$ and such that the following volume condition holds: $$\vol (L_Z) = \vol (L) = \vol (L_Y).$$ 
\end{definition}

Pictorially this is represented as:
\[
\xymatrix{ 
\quad & \ar@{->}[dl]_{p_X} (Z,L_Z)  \ar@{->}[dr]^{p_Y} & \quad \\
(X,L) & \quad  & (Y,L_Y).
}\]

All concepts we introduce will be birational invariants, so we will typically replace $(Y,L_Y)$ with $(Z,L_Z)$ and assume that $Y$ itself admits a morphism $p: (Y,L_Y) \to (X,L)$ defining the birational equivalence. A test configuration for $(X,L)$ is then essentially a $\C^*$-degeneration of an element of the birational equivalence class of $(X,L)$.

\begin{remark}\label{KKL} A closely related equivalence relation, which we call \emph{strong birational equivalence}, is given as follows. In the notation of Definition \ref{def:birational-equivalence}, we firstly ask that $q: Y\dashrightarrow X$ be a contraction with $q_*L_Y = L_X$. Writing $$p_Y^*L_Y = p_X^*L_X + E,$$ where $E$ is $p_X$-exceptional, we secondly ask that $E$ is effective. In the terminology of Kaloghiros--Kuronya--Lazi\'c \cite[Definition 2.3]{KKL} (whose work motivated the definition of strong birational equivalence), this asks that $q: Y \to X$ is $L_Y$-\emph{nonpositive}, and implies that for all $k \geq 0$ $$H^0(Y,kL_Y) \cong H^0(X,kL)$$ and hence that $\vol(L_Y) = \vol(L)$. This equivalence relation is perhaps more natural from the perspective of birational geometry, whereas our chosen equivalence relation (which can be seen as a sort-of asymptotic version of strong birational equivalence) is the right one to relate the K\"ahler geometry of $(Y,L_Y)$ and $(X,L)$.
\end{remark}

\begin{definition}
Suppose $p: (Y,L_Y) \to (X,L)$ defines a birational equivalence. A  \emph{test configuration} $(\X,\L)$ for $(X,L)$ (or more precisely for $p: (Y,L_Y) \to (X,L)$) is a normal projective variety $\X$, a $\Q$-line bundle $\L$ on $\X$ along with a surjective morphism $\pi: \X \to \pr^1$ such that
\begin{enumerate}[(i)]
\item there is a $\C^*$-action on $(\X,\L)$ making $\pi$ a $\C^*$-equivariant morphism;
\item there is a positive integer $k$ such that $k\cL$ is a line bundle, and a $\C^*$-equivariant isomorphism $(\X \backslash \{0\}, k\L|_{\X \backslash \{0\}}) \cong (Y,kL_Y)\times \C$, with the latter given the trivial $\C^*$-action on the first factor.\end{enumerate} \end{definition}

While the definition of a test configuration does not requite any positivity, in practice we will primarily be interested in \emph{big} test configurations. For this, passing to an equivariant  resolution of indeterminacy of the birational map $Y \times \pr^1 \dashrightarrow \X$ if necessary, we may assume $\X$ admits a morphism to $Y\times \pr^1$.

\begin{definition}
We say that $(\X,\L)$ is \emph{big} if  $\L$ is big and $\L-L_Y$ is effective.

\end{definition} 
\begin{remark}
The final condition, which asks that $\L$ minus the pullback of $L_Y$ to $\X$ is effective, is easily seen to be independent of choice of resolution of indeterminacy. The condition itself is harmless, as $\L - L_Y$ is supported in the central fibre, and so one can always modify $\L$ by adding a multiple of $\scO_{\pr^1}(1)$ to ensure effectiveness of $\L-L_Y$ while preserving the condition that $\L$ is big.
\end{remark}

We now specialise to the setting $L=-K_X$ is big. To define the Ding invariant, we will require a rather general version of the log canonical threshold, for which it is most convenient to take a valuative perspective (as for example in \cite[Section 3.3.2]{blum} or \cite{BFFU}). Considering a normal projective variety $X$ with a big line bundle $L$ and $\Q$-divisors $D_1, D_2$ (not necessarily effective), we decompose $D_1+D_2=P+N$ into an effective part $P$ and an antieffective part $N$. We then define \begin{align}\label{valuative-MIS}\J((X,D_1,\|L\|); &cD_2)(U) = \{f\in \scO_{X}(U):  \textrm{ there exists an  }\epsilon>0 \textrm{ such that for all }  E \\ &\ord_E(f) \geq (1+\epsilon) (\ord_E\|L\| + c\ord_E P)  - A_{(X,N)}(E)\}, \end{align}  where $E$ runs over prime divisors on birational models $Y \to X$ of $X$. The notation is as follows: firstly the \emph{log discrepancy} $A_{(X,N)}(E)$, is defined via a log resolution $\pi: Y \to X$ of $N$ and $E$ as $$A_{(X,N)}(E) =1+ \ord_E (K_{Y} - \pi^*(K_{X}+N)),$$ which  is defined provided  $K_{X}+N$ is $\Q$-Cartier (which in particular is necessary to make sense of the definition of the multiplier ideal sheaf). Secondly we have set $$\ord_E\|L\| = \lim_{k \to \infty} \frac{1}{k}\ord_E H^0(X,kL).$$
It is then clear from \cite[Corollary 10.15]{boucksom-l2} that triviality of $\J((X,D_1,\|L\|); cD_2)$ corresponds to the adequate analytic klt condition. 

Consider a test configuration $\pi:(\X,\L)\to(X,L)$, defined for a birational equivalence $p:(Y,L_Y)\to (X,L)$. We begin with the following observation (a particular case of which was proven in \cite[Section 3.1]{bermankpoly}). Suppose $r>0$ is such that $r\L$ is Cartier.

\begin{lemma}\label{lem_pushfwd} The pushforward $\pi_*(r(\L+K_{\cX/\pr^1}))$ is a line bundle. \end{lemma}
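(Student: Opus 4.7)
The plan is to show that $\pi_*\cO_\cX(r(\cL+K_{\cX/\pr^1}))$ is a torsion-free coherent sheaf on $\pr^1$ of generic rank one, hence a line bundle.

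\emph{Local freeness.} Since $\cX$ is integral (being normal and projective) and $\pr^1$ is a smooth curve, the surjective morphism $\pi$ is automatically flat. The sheaf $\cO_\cX(r(\cL+K_{\cX/\pr^1}))$ is reflexive on the normal variety $\cX$ (taking $r$ sufficiently divisible so that $r\cL$ is Cartier, as assumed, and $rK_\cX$ is at least well-defined as a Weil divisor class), and its pushforward under a proper flat morphism to a smooth curve is torsion-free, hence locally free.

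\emph{Rank one.} Restricting to $\pr^1\setminus\{0\}$, the $\C^*$-equivariant trivialisation $(\cX\setminus\cX_0, r\cL) \cong (Y, rL_Y) \times \C$ afforded by the test configuration induces $K_{\cX/\pr^1}|_{\cX\setminus\cX_0} \cong K_Y \boxtimes \cO$, so the generic rank of the pushforward equals $\dim H^0(Y, \cO_Y(r(L_Y+K_Y)))$. Using $p_*L_Y = -K_X$ by definition of the birational equivalence, and $p_*K_Y = K_X$ for any birational morphism of normal varieties, one has $p_*(L_Y+K_Y) = 0$ as Weil $\bbq$-divisor classes on $X$, so $L_Y+K_Y$ is a $p$-exceptional $\bbq$-Weil divisor on $Y$. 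A section of $\cO_Y(r(L_Y+K_Y))$ corresponds to a rational function $f$ with $\divisor(f) + r(L_Y+K_Y)\geq 0$; this constraint is trivial along the non-exceptional prime divisors of $Y$ (which are the strict transforms of prime divisors on $X$), so $f$ has no poles along divisors in $X$, and by normality of $X$ the function $f$ lies in $H^0(X,\cO_X) = \bbc$. This forces $\dim H^0(Y,\cO_Y(r(L_Y+K_Y))) \leq 1$, with equality holding provided $L_Y + K_Y \geq 0$ so that nonzero constants furnish actual sections.

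The main obstacle is precisely this effectiveness of $L_Y + K_Y$. Writing $L_Y = p^*(-K_X) + F$ with $F$ a $p$-exceptional $\bbq$-divisor, and $K_Y = p^*K_X + E$ with $E$ the discrepancy divisor, I would argue $F \geq 0$ by a negativity-lemma-style argument using the volume condition $\vol(L_Y) = \vol(-K_X)$ of the birational equivalence (as already visible in simple surface blow-up examples, where preservation of volume rules out negative exceptional contributions to $F$), while $E \geq 0$ under the standing klt-type assumption on $X$; summing gives $L_Y + K_Y = F + E \geq 0$, completing the rank-one statement. The delicate point is thus that the equality $\vol(L_Y) = \vol(-K_X)$ rather than only $p_*L_Y = -K_X$ is essential, making the volume condition in Definition 4.1 not cosmetic but rather what forces the pushforward to be a genuine line bundle.
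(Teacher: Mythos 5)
Your first two steps are the same as the paper's: the pushforward is a torsion-free coherent sheaf on the curve $\pr^1$, hence locally free, and its generic rank is $\dim H^0(Y,r(L_Y+K_Y))$, which is at most one because $L_Y+K_Y$ is $p$-exceptional and $X$ is normal. The gap is in the final step, which you yourself isolate as ``the main obstacle'': the claim that the volume condition $\vol(L_Y)=\vol(-K_X)$ forces $F:=L_Y-p^*(-K_X)$ to be effective is false, and no negativity-lemma-style argument can establish it (the negativity lemma requires a relative (anti-)nefness hypothesis that is simply absent here). Effectivity of $F$ is precisely the \emph{extra} condition singled out in Remark \ref{KKL} as ``strong birational equivalence'', which the authors deliberately do not impose; the typical examples of their weaker equivalence have $F$ anti-effective. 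Concretely, if $W$ is a prime divisor on a model $p\colon Y\to X$ with $\ord_W\|-K_X\|>0$, and one takes $L_Y=p^*(-K_X)-cW$ with $0<c\le \ord_W\|-K_X\|$ rational, then every section of $-kK_X$ already vanishes to order at least $kc$ along $W$, so $H^0(Y,kL_Y)=H^0(X,-kK_X)$ for all sufficiently divisible $k$ and the volume is preserved, while $F=-cW$ is anti-effective. A secondary inaccuracy: effectivity of the discrepancy divisor $K_Y-p^*K_X$ requires $X$ smooth (or with canonical singularities); a klt-type assumption only gives coefficients $>-1$.

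The paper's own treatment of the rank-one statement does not pass through effectivity of $L_Y+K_Y$ as a separate claim: it identifies $H^0(Y,L_Y+K_Y)$ with $H^0(X,\cO_X)$ by writing $L_Y=p^*(-K_X)+E$ with $p_*E=0$ and invoking the projection formula, so that sections correspond to regular functions on the normal variety $X$. Your argument reproduces the inclusion $H^0(Y,r(L_Y+K_Y))\hookrightarrow H^0(X,\cO_X)=\C$ (this is the same exceptional-divisor mechanism), but the surjectivity---i.e.\ the existence of a nonzero section, which is what makes the pushforward a line bundle rather than zero---is exactly the point your volume-based argument was meant to supply and does not. To repair your write-up along the paper's lines you should derive the nonvanishing from the structure of the birational equivalence (the identification with $H^0(X,\cO_X)$ via pushforward), not from a purported effectivity of $F$, which fails for the equivalence relation actually used in the paper.
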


\begin{proof} Since $\pi$ is surjective, the pushforward is torsion-free, and since the base $\pr^1$ is one-dimensional, the direct image  $\pi_*(r(\L+K_{\X/\pr^1}))$ is actually locally free, hence corresponds to a vector bundle. But the general fibre is given as $$(\pi_*(r(\L+K_{\X/\pr^1})))_t\cong H^0(\X_t, \L_{t} +K_{\X_t})\cong H^0(Y,L_Y+K_Y).$$
Since $p:Y\to X$ is a birational equivalence, then $L_Y=\pi^*(-K_X)+E$ with $E$ some $\bbq$-divisor such that $\pi_*E=\cO_X$. Thus
$$H^0(Y,L_Y+K_Y)\cong H^0(Y,\pi^*(-K_X)+E+K_Y)\cong H^0(X,\cO_X)$$
by the projection formula. Since the right-hand vector space has dimension one, the result is proven.
 \end{proof}
By picking a local trivialising section for $\pi_*(r(\L+K_{\cX/\pr^1}))$, which we  identify with a trivialising section of $r(\L+K_{\cX/\pr^1})$, we define a $\Q$-divisor $\Delta$ on $\X$. As $$K_{\X} + \Delta = \L + K_{\pr^1}$$ is $\Q$-Cartier, we may define the multiplier ideal sheaf $\J((\X,\Delta,\|\L\|);c\X_0)$ using the above recipe.

\begin{definition} We define the \emph{log canonical threshold} of $\X_0$ on $(\X,\Delta,\|\L\|)$ to be $$\lct((\X,\Delta,\|\L\|);\X_0) = \sup \{c\in \R: \J((\X,\Delta,\|\L\|);c\X_0) = \scO_X\}.$$\end{definition}

This definition allows us to define the Ding invariant.

\begin{definition} The \emph{Ding invariant} of $(\X,\L)$ is defined as $$\Ding(\X,\L) =\lct((\X,\Delta,\|\L\|);\X_0) - 1- \frac{\langle \L^{n+1}\rangle}{(n+1)\langle (-K_X)^n\rangle}.$$
\end{definition}

\begin{definition} We say that $(X,-K_X)$ is \emph{Ding semistable} if for all big test configurations $(\X,\L)$ for $(X,-K_X)$ we have $$\Ding(\X,\L) \geq 0.$$

\end{definition}

All test configurations $(\X,\L)$ admit a natural morphism $$\X \backslash \{0\} \to X \times (\pr^1 \backslash \{0\})$$ induced by $p: Y \to X$. To define uniform Ding stability, we require in addition a norm, which is defined at first for \emph{dominant} test configurations, where this morphism extends over the central fibre.

\begin{definition} We say that a test configuration $(\X,\L)$ for $$p: (Y,L_Y) \to (X,L)$$ is a \emph{dominant} test configuration if it admits a surjective $\C^*$-equivariant morphism $$p: \X \to Y\times \pr^1 \to X\times \pr^1$$ compatible with the natural maps to $\pr^1$ and with $p: Y \to X$ and such that $K_{\X}$ is $\Q$-Cartier.\end{definition}

Note that given any test configuration for $p: (Y,L_Y) \to (X,L)$, one can (non-canonically) construct an associated dominant test configuration $$q: (\X',q^*\L) \to (\X,\L)$$ by taking an equivariant resolution of indeterminacy of the natural birational map $$Y\times \pr^1 \dashrightarrow \X, $$ and one can even take $\X'$ to be smooth. The following invariants generalise to the big case invariants introduced in \cite{BHJ1,uniform}.

\begin{definition} For a dominant test configuration $(\X,\L)$, setting $V=\langle (-K_X)^n\rangle = \langle L_Y^n\rangle$  we define 
\begin{enumerate}[(i)]
 \item the $J$-\emph{energy} $$J(\X,\L) = V^{-1}\langle \L\cdot L_Y^n\rangle-  (V(n+1))^{-1}\langle \L^{n+1}\rangle;$$
 \item the $I$-\emph{energy} $$I(\X,\L) = V^{-1}(-\langle \L^{n+1}\rangle - \langle \L^n\cdot L\rangle +\langle \L\cdot L_Y^n\rangle);$$
 \item the \emph{minimum norm} (or $I-J$-\emph{energy}) $$\|(\X,\L)\|_m = I(\X,\L) - J(\X,\L).$$
 \end{enumerate} For a general test configuration we define $J(\X,\L)$ to equal $J(\Y,\L_{\Y})$ for any associated dominant test configuration.
\end{definition}

It is straightforward to see that the for a general test configuration, these definitions---calculated on a resolution of indeterminacy---are independent of choice of such resolution. The following is proven in Section \ref{sec5} as Lemma \ref{NA-IJ-uniform}, using analytic techniques.

\begin{lemma}
There are uniform bounds \begin{align*}J(\X,\L) &\leq I(\X,\L) \leq (n+1)J(\X,\L), \\ n^{-1}J(\X,\L)&\leq \|(\X,\L)\|_m =  I(\X,\L)- J(\X,\L)\leq nJ(\X,\L). \end{align*}
\end{lemma}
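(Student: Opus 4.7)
The plan is to bootstrap from the analytic uniform estimates of Lemma \ref{IJ-uniform}, transferring them through the slope formulas for positive Deligne functionals developed in the sequel of the paper.

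First I would reduce to the case where $(\X,\L)$ is dominant, which is allowed since both sides of each inequality are, by construction, unchanged on passing to an equivariant resolution of indeterminacy $\X' \to \X$. Fix a smooth $\theta \in c_1(-K_X)$ and attach to $(\X,\L)$ an $S^1$-invariant subgeodesic ray $t \mapsto \phi_t \in \PSH_{\mathrm{min},0}(X,\theta)$ in the standard way: choose a smooth $S^1$-invariant Hermitian metric on $\L$ representing a relatively big class, restrict to fibres $\X_\tau$ with $|\tau| = e^{-t}$, transport to $X$ via the composition $\X \setminus \X_0 \cong Y \times \C^* \to X \times \C^*$, and normalize supremum to zero. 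The bigness of $\L$ together with effectivity of $\L - L_Y$ ensures that $\phi_t$ lies in $\PSH_{\mathrm{min},0}(X,\theta)$ for all large $t$.

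Next I would invoke the slope formulas for positive Deligne functionals to obtain the three key identifications
\begin{align*}
\lim_{t\to\infty} \frac{E_\theta\langle\phi_t\rangle}{t} &= \frac{\langle\L^{n+1}\rangle}{(n+1)V},\\
\lim_{t\to\infty} \frac{1}{t}\int_X (\phi_t - V_\theta)\,\MA_\theta\langle V_\theta\rangle &= \frac{\langle \L \cdot L_Y^n\rangle}{V},\\
\lim_{t\to\infty} \frac{1}{t}\int_X (\phi_t - V_\theta)\,\MA_\theta\langle\phi_t\rangle &= \frac{\langle \L^{n+1}\rangle + \langle \L^n \cdot L_Y\rangle}{V},
\end{align*}
with appropriate sign conventions. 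Combined with the definitions of $J_\theta$, $I_\theta$ and $I_\theta - J_\theta$, these give
\[
\lim_{t\to\infty}\frac{J_\theta\langle \phi_t\rangle}{t} = J(\X,\L), \quad \lim_{t\to\infty}\frac{I_\theta\langle \phi_t\rangle}{t} = I(\X,\L), \quad \lim_{t\to\infty}\frac{(I_\theta - J_\theta)\langle \phi_t\rangle}{t} = \|(\X,\L)\|_m.
\]
Applying Lemma \ref{IJ-uniform} to each $\phi_t$, dividing by $t$ and letting $t \to \infty$ then yields the desired inequalities.

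The hard step is the middle one, namely the verification that the slope formula produced by the positive Deligne pairing machinery applies to the mixed pairings occurring in $I_\theta$ and $J_\theta$, and that the resulting numerical limits are precisely the intersection numbers $\langle \L \cdot L_Y^n\rangle$, $\langle \L^n \cdot L_Y\rangle$, $\langle \L^{n+1}\rangle$ rather than variants differing by central fibre contributions. This requires the identification of analytic and algebraic positive intersection products for currents with minimal singularities (recorded in Section~2 of the paper), together with a careful use of the effectivity hypothesis $\L - L_Y \geq 0$ to control central-fibre terms. Once the dominant case is established, the general case follows at once from the definition of $J$, $I$, and $\|(\X,\L)\|_m$ via dominant resolutions.
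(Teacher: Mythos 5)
Your overall route is exactly the paper's: view $I$, $J$ and $I-J$ as (differences of) positive Deligne functionals, apply the analytic bounds of Lemma \ref{IJ-uniform} along a ray associated to $(\X,\L)$, reduce to a dominant model by birational invariance, and pass to slopes via Theorem \ref{thm-deligne-slope}. So the strategy is the same as the paper's proof.

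The one step that fails as written is the construction of the ray. You propose to restrict a \emph{smooth} $S^1$-invariant Hermitian metric on $\L$ to the fibres and transport to $X$; since $c_1(\L)$ is merely big, a smooth representative is not positive, the resulting fibrewise potentials are in general not $\theta$-psh, and in particular the claim that bigness of $\L$ together with effectivity of $\L-L_Y$ forces $\phi_t\in\PSH_{\mathrm{min},0}(X,\theta)$ is not justified. What is needed---and what Theorem \ref{thm-deligne-slope} actually requires in order for the slopes to compute positive intersection numbers---is a realising metric in the sense of Definition \ref{def_realisable}: an $S^1$-invariant \emph{psh} metric on $\L$ with minimal singularities both globally and fibrewise, whose existence for big test configurations with $\L-L_Y$ effective is precisely Proposition \ref{prop_existencemin}. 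With that substitution your argument goes through; for the mixed terms one realises the constant ray $V_\theta$ on the pullback of $L_Y$ (the trivial test configuration) in the remaining slots. Two cosmetic points: the slope of $t\mapsto\int_X(\phi_t-V_\theta)\,\MA_\theta\langle\phi_t\rangle$ is $\langle\L^{n+1}\rangle-\langle\L^{n}\cdot L_Y\rangle$ (a sign differs from your third display), and the paper's $E_\theta$ carries no $V^{-1}$; neither affects the conclusion, since the asserted bounds are invariant under the common rescaling by $V$.
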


\noindent Thus the ``norms'' defined by each of these quantities are equivalent.

\begin{definition} We say that $(X,-K_X)$ is  \emph{uniformly Ding stable} if there exists an $\epsilon>0$ such that for all  big  test configurations $(\X,\L)$ we have $$\Ding(\X,\L) \geq \epsilon \|(\X,\L)\|_m.$$
 \end{definition}
 
 The following will be useful in relating uniform Ding stability to its analytic counterpart.
 
 \begin{proposition}\label{prop:suff-dominant}$(X,-K_X)$ is uniformly Ding stable if and only if  there exists an $\epsilon>0$ such that for all smooth, dominant big test configurations $(\X,\L)$ we have $$\Ding(\X,\L) \geq \epsilon\|(\X,\L)\|_m.$$\end{proposition}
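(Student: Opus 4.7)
The forward implication is immediate, since smooth dominant big test configurations form a subclass of all big test configurations. For the converse, the plan is, given an arbitrary big test configuration $(\X,\L)$ for some birational equivalence $p: (Y,L_Y) \to (X,-K_X)$, to produce a smooth dominant big test configuration $(\X',\L')$ satisfying $\|(\X',\L')\|_m = \|(\X,\L)\|_m$ and $\Ding(\X',\L') \leq \Ding(\X,\L)$; the desired inequality for $(\X,\L)$ will then follow from its analogue applied to $(\X',\L')$.

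For the construction, let $q: \X' \to \X$ be an equivariant morphism with $\X'$ smooth and admitting an equivariant morphism to $Y \times \pr^1$ (and hence to $X \times \pr^1$); such an $\X'$ exists by applying equivariant resolution of indeterminacy to the natural $\C^*$-equivariant birational map $Y\times \pr^1 \dashrightarrow \X$, followed by equivariant resolution of singularities (and, if necessary, a further equivariant blowup to ensure $\X'$ dominates any chosen dominant model witnessing the effectiveness of $\L-L_Y$). Setting $\L':=q^*\L$, bigness of $\L'$ is preserved under birational pullback, and $\L'-L_Y$ is effective on $\X'$ by the choice above. The equality $\|(\X',\L')\|_m = \|(\X,\L)\|_m$ is immediate from the definition of the minimum norm of a general test configuration via any associated dominant model, so it remains to treat the Ding invariant.

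The volume contribution is birationally invariant: $\langle (\L')^{n+1}\rangle = \langle \L^{n+1}\rangle$ by birational invariance of positive intersection products. For the $\lct$ contribution, the identity $K_{\X'/\pr^1} = K_{\X'/\X} + q^*K_{\X/\pr^1}$ shows that the pullback $q^*s$ of a trivialising section $s \in H^0(\X, r(\L+K_{\X/\pr^1}))$ defining $\Delta$ produces, after multiplying by the canonical section of $rK_{\X'/\X}$, a trivialising section of $r(\L'+K_{\X'/\pr^1})$ whose associated divisor is $\Delta' = q^*\Delta + K_{\X'/\X}$. Combined with the birational invariants $\ord_E\|\L\| = \ord_E\|\L'\|$ and $\X'_0 = q^*\X_0$ (by flatness of $\pi\circ q$), and the standard transformation law $A_{\X'}(E) = A_\X(E) - v_E(K_{\X'/\X})$ for log discrepancies, a direct analysis of the valuative formula \eqref{valuative-MIS} gives $\lct((\X',\Delta',\|\L'\|);\X'_0) \leq \lct((\X,\Delta,\|\L\|);\X_0)$, and therefore $\Ding(\X',\L') \leq \Ding(\X,\L)$.

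The principal subtlety will be this lct behaviour under resolution: the extra summand $K_{\X'/\X}$ in $\Delta'$ contributes a nontrivial term along every $q$-exceptional divisor in the valuative formula, so the pullback is not crepant in the classical sense. Fortunately, the contribution has the right sign, so the Ding invariant can only decrease under resolution, which is precisely what the converse implication requires.
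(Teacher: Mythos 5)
Your overall reduction is the same as the paper's: pass to a smooth dominant model $q\colon(\X',q^*\L)\to(\X,\L)$, note that the minimum norm and the volume term are unchanged, and compare the lct terms; and you are right that an inequality $\Ding(\X',q^*\L)\leq\Ding(\X,\L)$ would suffice logically. The gap is in the lct step. With the paper's (Berman-type) convention, $\Delta$ is $-\tfrac1r$ times the divisor of the trivialising section $s$ of $r(\L+K_{\X/\pr^1})$, so that $K_\X+\Delta\sim_{\Q,\pr^1}-\L$; since the trivialising section upstairs corresponds to $q^*s$ twisted by the canonical section of $rK_{\X'/\X}$, its divisor is $q^*\divisor(s)+rK_{\X'/\X}$, whence $\Delta'=q^*\Delta-K_{\X'/\X}$, i.e.
$$K_{\X'}+\Delta'=q^*(K_\X+\Delta).$$
Your formula $\Delta'=q^*\Delta+K_{\X'/\X}$ has the wrong sign. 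The correct relation is crepant: log discrepancies with respect to $(\X',\Delta')$ and $(\X,\Delta)$ agree for every divisorial valuation, and together with $\ord_E\|q^*\L\|=\ord_E\|\L\|$ and $\X'_0=q^*\X_0$ this yields \emph{equality} of the log canonical thresholds, hence of the Ding invariants --- which is exactly the paper's proof; no monotonicity argument is needed.

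Second, even granting your formula, the step ``the contribution has the right sign'' does not close: $K_{\X'/\X}$ has no definite sign here, since $\X$ is only assumed normal (not canonical); indeed $K_\X$ need not even be $\Q$-Cartier for a general big test configuration, so $K_{\X'/\X}$ may not be defined at all --- only $K_\X+\Delta$ is known to be $\Q$-Cartier, which is precisely why the comparison must be phrased intrinsically as $K_{\X'}+\Delta'=q^*(K_\X+\Delta)$ rather than through $q^*\Delta$ and $K_{\X'/\X}$ separately. So the claimed inequality $\lct((\X',\Delta',\|q^*\L\|);\X'_0)\leq\lct((\X,\Delta,\|\L\|);\X_0)$ is unjustified as written; once replaced by the crepant computation and the resulting equality, the rest of your reduction (equality of norms, birational invariance of the positive intersection product, and the choice of $\X'$ dominating a model on which $\L-L_Y$ is effective) goes through as in the paper.
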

 
 \begin{proof}Take an associated dominant test configuration $(\X',q^*\L)$ with smooth total space; note that this test configuration remains big. Birational invariance of the positive intersection product means all that needs to be checked is that the $\lct$ term in Ding is unchanged. But since the line bundle on $\X'$ is the pullback $q^*\L$, for all divisorial valuations $E$ we have $$\ord_E\|\L\| = \ord_E\|q^*\L\|.$$ Similarly with $\Delta'$, the associated divisor on $\X'$, we have $$K_{\X'} + \Delta' = q^*(K_{\X} + \Delta)$$ by a direct calculation, which then implies the desired equality of log canonical thresholds. 
 \end{proof}
 
 \begin{remark}\label{rmk:kstab} For a general normal projective variety $X$ with a big line bundle $L$, one can also define uniform K-stability in an analogous way. In fact there are two possible definitions, which we expect to be equivalent, though this appears to be challenging. 
 
 To explain the first approach, firstly set  $$\mu(X,L) = \frac{-K_X\cdot \langle L^{n-1}\rangle}{\langle L^n\rangle}.$$ Defining the \emph{Donaldson--Futaki invariant} of a big test configuration as $$\DF(\X,\L) = \frac{n}{n+1}\mu(X,L)\langle \L^{n+1} \rangle + \langle \L^n\rangle  \cdot K_{\X/\pr^1},$$ we say that $(X,L)$ is \emph{uniformly K-stable} if there exists an $\epsilon>0$ such that for all big test configurations we have $$\DF(\X,\L) \geq \epsilon \|(\X,\L)\|_m.$$ This notion should be compared to related work of Li \cite{CL}, who defines an analogue of the Donaldson--Futaki invariant in this way when $\L$ is big but $L$ is \emph{ample}; we note birational equivalence classes of $(X,L)$ do not appear in his work (so in particular in his situation the general fibres of the test configuration are isomorphic to $(X,L)$ itself). 
 
The second approach uses instead intersection theory on the Riemann--Zariski space. Note that the canonical class defines a b-Weil divisor $K_{\underline X}$ on the Riemann--Zariski space $\underline X$ of $X$, where on each model $Y$ of $X$ the b-divisor $K_{\underline X}$ restricts to $K_Y$. Then one should be able to make sense of $$\mu(X,L) = \frac{-K_{\underline X}\cdot \langle L^{n-1}\rangle}{\langle L^n\rangle},$$ and  $$\DF(\X,\L) = \frac{n}{n+1}\mu(X,L)\langle \L^{n+1} \rangle + \langle \L^n\rangle  \cdot K_{\underline \X/\pr^1},$$ where the notation means that $\underline \X$ is the Riemann--Zariski space of $\X$ and these numbers are intersection numbers on the Riemann--Zariski space. The reason that this definition is not quite precise is that this is an intersection of $b$-divisors on the Riemann--Zariski space; see Dang--Favre for work in this direction for \emph{nef} $b$-divisors \cite{favre-dang}. The equivalence of these approaches is closely related to the entropy approximation conjecture; see work of Li when $L$ is ample \cite[Section 4]{CL-fujita}. 

 \end{remark}
 
\subsection{General singularity types}
 
When considering big line bundles, it is natural to consider test configurations $(\X,\L)$ with $\L$ big. While this is sufficient for our main results, we will prove our slope formulas for positive Deligne functionals in much greater generality. The goal of this section is thus to define more purely algebro-geometric counterparts of singularity types, which will be used to phrase our slope formulas. 

Let $\underline X$ be the Riemann--Zariski space of $X$, as defined in Section \ref{sect_23}, and let $L$ be a b-Cartier pseudoeffective divisor $L$ on $\underline X$. An \emph{enhanced $b$-divisor} $\mathbb D_{\underline X}$ in this space (following the terminology of \cite[Section 10.2]{xia-vb}) is an $\R$-divisor $$D_{Y} = \sum_E a_{E_{Y}} E_{Y}$$ on each $Y$ compatible under pushforward between models, where the sum is allowed to be countable. 

\begin{definition}\label{def:sing-type}
A \emph{$b$-singularity type} is an enhanced $b$-divisor $\mathbb D_{\underline X}$ on $\underline X$ such that
\begin{enumerate}[(i)]
\item $L - D_{Y}$ is nef in codimension one;
\item for models $\pi: Z\to Y$ the difference $D_Z-\pi^*D_Y$ is effective.
\end{enumerate}
\end{definition}
Note that on each model, a $b$-singularity type defines a class in the N\'eron--Severi group of $\Y$ by \cite[Remark 2.2]{xia-pp}, even though the sum may not be finite. The first condition, namely that on each model $Y$ we require $L - D_{Y}$ be nef in codimension one \cite{bou04}, is defined as follows. We say that a class $\alpha \in H^2(Y,\R)$ is \emph{modified K\"ahler} if there exists a birational model $\sigma: Y' \to X$ and a K\"ahler class $\alpha' \in  H^2(Y,\R)$ such that $\sigma_*\alpha' = \alpha$; this defines a cone in $H^2(Y,\R)$, and we say that $\beta$ is \emph{nef in codimension one} (or \emph{modified nef}) if $\beta$ lies in the closure of the modified nef cone. We refer to Boucksom for alternative characterisations \cite{bou04}.

by definition requires that there is a birational model $Y' \to Y$ and a nef 

\begin{example}

Let $L$ be a big line bundle. A canonical choice of $b$-singularity type associated to $L$ is obtained by setting $$D_{Y} =  \sum_E \ord_E\|L\|.$$ By \cite{bou04}, \cite[Corollary 5.21]{xia-okounkov}, for a metric $\phi$ with minimal singularities we have $$ \ord_E\|L\| = \nu_E(\phi),$$ where  $\nu_E(\phi)$ denotes the generic Lelong number of $\phi$ along $E$. That this defines a $b$-singularity type follows from \cite[Section 10.2]{xia-vb}. We write this singularity type as $[\phi]$.

\end{example}

\begin{example}\label{ex:xia}
Suppose $\phi$ is a singular metric on $L$. Then setting  $$D_{Y} = \sum_E \nu_E(\phi)$$ defines a $b$-singularity type by \cite[Section 10.2]{xia-vb} (see also \cite[Section 2.3]{xia-pp}). This is an invariant of the \emph{singularity type} of $\phi$, namely the equivalence class of singular metrics where $\phi,\psi$ are equivalent if there exists a constant $C$ such that $$\psi - C \leq \phi \leq \psi+C.$$ 

Conversely, recent work of Trusiani (independent to our own) shows that given a $b$-singularity type $\mathbb D_{\underline X}$ with  strictly positive volume (in a sense made precise below), there is a singular metric $\phi$ such that $\mathbb D_{\underline X}=[\phi]$ \cite{trusiani-ytd}. Trusiani proves this for $L$ ample, but the proof extends to the setting that $L$ is big (we thank Trusiani for advice on this point). So this notion of a $b$-singularity type is precisely the algebraic counterpart to the information obtained from  the singularity type of a singular metric.

\end{example}

We now consider an analogous notion on the test configuration. We begin by constructing an equivariant analogue of the Riemann--Zariski space of a test configuration; this is defined by $$\underline \X = \varprojlim_\Y \Y,$$ where $\Y \to \X$ is a  $\C^*$-\emph{equivariant} morphism. We also choose a b-Cartier divisor $\L$ on $\underline X$.

\begin{definition}
An \emph{invariant $b$-singularity type} is a $\C^*$-invariant enhanced $b$-divisor $\mathbb D_{\underline \X}$ on $\underline \X$ such that 
\begin{enumerate}[(i)]
\item $\L - D_{\Y}$ is nef in codimension one;
\item  for $\C^*$-equivariant models $\pi: \mathcal Z\to \Y$ the difference $D_{\mathcal Z}-\pi^*D_{\mathcal Y}$ is effective.

\end{enumerate}
We say that an invariant $b$-singularity type \emph{restricts to} $\mathbb D_{\underline X}$ if $\mathbb D_{\underline \X}$ restricts to $\mathbb D_{\underline X}$ on all fibres $\X_t \cong X$ for $t \neq 0$.

\end{definition}

Associated to an invariant $b$-singularity type on a test configuration, we can associate its volume $$\langle (\L-\mathbb D_{\underline \X})^{n+1}\rangle = \varprojlim_{\Y} \langle (\L- D_{\Y})^{n+1}\rangle,$$ where the right hand side being decreasing under nets implies the limit exists, as in Section \ref{sect_23}, by property $(ii)$ of the definition of an invariant $b$-singularity type (this observation also makes precise the use of volume in Example \ref{ex:xia}). Similarly one can make sense of log canonical thresholds involving $b$-singularity types by defining, for $E \subset \Y$ a divisorial valuation, $$\ord_E \mathbb D_{\underline \X} = \ord_E D_{\Y},$$ and following the valuative approach to log canonical thresholds presented in Section \ref{sec:dingstability}. Thus one can define the Ding invariant of a test configuration relative to an $b$-singularity type as $$\Ding_{\mathbb D_{\underline \X}}(\X,\L) =\lct((\X,\Delta,\mathbb D_{\underline \X});\X_0) - 1- \frac{\langle  (\L-\mathbb D_{\underline \X})^{n+1}\rangle}{(n+1)\langle (-K_X - \mathbb D_{\underline X})^n\rangle},$$ where we assume that $\langle (-K_X - \mathbb D_{\underline X})^n\rangle)$ is strictly positive. Thus invariant $b$-singularity types seem to be the right general setting to algebraically define the Ding invariant.

As in Example \ref{ex:xia}, an $S^1$-invariant singular metric $\Phi$ on $\L$ determines an invariant $b$-singularity type $\divisor_{\underline \X} \Phi$. The converse of this observation is encapsulated in the following definition.

\begin{definition}\label{def_realisable}Let $X$ be a  projective manifold and $L$ be a line bundle on $X$. Let $(\cX,\cL)$ be a test configuration for $(X,L)$. Let $\mathbb D_{\underline \X}$ be an invariant $b$-singularity type which restricts to $\mathbb D_{\underline X}$. We say that the test configuration $(\cX,\cL)$ is \textit{$(\mathbb D_{\underline X},\mathbb D_{\underline \X})$-realisable} if there exists an $S^1$-invariant  psh metric $\Phi$ on $\cL$ such that:
\begin{enumerate}[(i)]
	\item viewing the restriction of $\Phi$ to $\bbc^*$ as a function $z\mapsto \phi_z$, we have for all $z$ that $[\phi_z]=\mathbb D_{\underline X}$;
	\item $[\Phi]=\mathbb D_{\underline \X}$.
\end{enumerate} We then also say that $(\X,\L)$ is  \textit{$([\phi],[\Phi])$-realisable}. If $\psi$ and $\Psi$ are chosen to have minimal singularities, we simply say that the test configuration is \textit{realisable}. If $\Phi$ is a metric as described above, we say that it is a \textit{realising metric} for the given singularity types (or for $(\cX,\cL)$ when the singularity types are minimal).
\end{definition}
\section{Asymptotics of functionals and the main theorem}\label{sec5}

\subsection{Realisability of test configurations}\label{realisibility}

In order to prove slope formulae for the Ding functional, we must begin by associating rays of singular metrics to test configurations. We begin by  showing that all big test configurations are realisable in the sense of Definition \ref{def_realisable}. The following proof is a corrected version of a result in a previous version of this paper, where the assumption $\cL-L$ is effective was not assumed (where $L$ denotes the natural pullback to $\X$), and was suggested to us by Antonio Trusiani, whom we thank.
\begin{proposition}\label{prop_existencemin}
Let $L$ be a line bundle on $X$ and let $(\cX,\cL)$ be a test configuration for $(X,L)$, dominating the trivial test configuration via $\pi:\cX\to X\times\bbp^1$. If $L$ is pseudoeffective and if $\cL-L$ is effective, then $(\cX,\cL)$ is realisable. In that case, a realising metric can be chosen to be constant on the preimage of any open set that does not contain zero in its closure.
\end{proposition}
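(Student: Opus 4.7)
The plan is to construct the realising metric $\Phi$ explicitly by pulling back a minimal-singularities metric from $X$ and correcting by the canonical singular weight of the effective divisor $E := \cL - L$ on $\cX$ (where $L$ now denotes the natural pullback $\pi^*\mathrm{pr}_X^*L$, and effectivity of $E$ is the standing hypothesis). First, pseudoeffectivity of $L$ on $X$ provides a minimal-singularities psh metric $\psi_{\min}$; let $\Psi_0 := \pi^*\mathrm{pr}_X^*\psi_{\min}$ be its pullback to $\cX$, an $S^1$-invariant psh metric on $L$ on $\cX$ with minimal singularities there. Letting $s_E$ be the canonical holomorphic section of $\cO(E)$, I set
\[
\Phi := \Psi_0 + \log|s_E|^2,
\]
an $S^1$-invariant singular psh metric on $\cL = L + E$.

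The fibrewise singularity condition is then immediate: since $E$ is supported in $\cX_0$, the section $s_E$ is nowhere vanishing on $\cX_z$ for $z \neq 0$, so $\log|s_E|^2|_{\cX_z}$ is smooth and bounded and $\Phi|_{\cX_z}$ differs from $\psi_{\min}$ by a bounded function, giving the same $b$-singularity type, i.e. minimal singularities on $L$.

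The main step is to verify that $\Phi$ has minimal singularities on $\cL$. The strategy is to compare $\Phi$ with the canonical envelope $V_{\cL}$ (taken with respect to an $S^1$-invariant smooth reference), which is automatically $S^1$-invariant and has minimal singularities on $\cL$ by construction. Via the $\bbc^*$-induced product structure $\cX \setminus \cX_0 \cong X \times \bbc^*$ and the identification $\cL|_{\cX \setminus \cX_0} \cong \mathrm{pr}_X^*L$ through $s_E$, any negative psh metric $\psi'$ on $L$ lifts to a psh metric $\mathrm{pr}_X^*\psi' + \log|s_E|^2$ on $\cL|_{\cX\setminus\cX_0}$, which is locally bounded above near $\cX_0$; its upper semicontinuous regularization is a psh metric on $\cL$ over all of $\cX$ (the extension across $\cX_0$ is legitimate as this set has real codimension two). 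Minimality of $V_{\cL}$ then forces $\psi' \leq V_{\cL}|_{\cX_z} + C$ for each $z \neq 0$, so $V_{\cL}|_{\cX_z}$ has minimal singularities on $L$ and is equivalent to $\psi_{\min}$ up to bounded. Since $\Phi|_{\cX_z}$ has the same property, $V_{\cL}$ and $\Phi$ differ by a bounded function on each fibre $\cX_z$, and combining this fibrewise equivalence with the convexity of the $S^1$-invariant function $V_{\cL} - \Phi$ in $-\log|z|$ yields $V_{\cL} - \Phi = O(1)$ on $\cX\setminus\cX_0$; the codimension-two extension then propagates this bound across $\cX_0$, so that $\Phi$ has minimal singularities on $\cL$.

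The hardest point is this last global comparison with $V_{\cL}$: because $V_{\cL}$ is only $S^1$-invariant rather than fully $\bbc^*$-invariant, the fibrewise equivalence with $\psi_{\min}$ must be combined with convexity information in $-\log|z|$ to extract a uniform bound. The final constancy assertion is then arranged by choosing $s_E$ to be $\bbc^*$-equivariant (possible since $E$ is a $\bbc^*$-invariant divisor supported in $\cX_0$) and absorbing any resulting weight into a multiple of $\log|z|^2$; this makes $\Phi$ $\bbc^*$-invariant on $\cX\setminus\cX_0$, and in particular constant on the preimage of any open subset of $\bbp^1$ bounded away from $0$.
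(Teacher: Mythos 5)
Your construction of $\Phi=\pi^*\mathrm{pr}_X^*\psi_{\min}+\log|s_E|^2$ and the verification that it is $\cL$-psh with fibrewise minimal singularities matches the first half of the paper's argument. The genuine gap is your ``main step'': the claim that this explicit $\Phi$ has \emph{global} minimal singularities on $\cL$ is false in general, so no argument can close it. Take the trivial test configuration $\cX=X\times\bbp^1$ with $\cL=\mathrm{pr}_X^*L+c\,[X\times\{0\}]$, $c>0$, so that $\cL-L=c\,\cX_0$ is effective. Since $\cO(\cX_0)$ is pulled back from the ample $\cO_{\bbp^1}(1)$, one checks that $\mathrm{pr}_X^*V_\theta$ (with the smooth Fubini--Study weight on the $\bbp^1$-factor) already has minimal singularities in $c_1(\cL)$, and in particular a minimal-singularity metric has no pole along $\cX_0$; your $\Phi$, however, carries the vertical pole $c\log|z|^2$, so $V_{\cL}-\Phi\sim -c\log|z|^2\to+\infty$ near the central fibre and $\Phi$ does not have minimal singularities. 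Concretely, the step that fails is ``fibrewise equivalence plus convexity in $-\log|z|$ yields $V_{\cL}-\Phi=O(1)$'': the fibrewise comparison only gives $|V_{\cL}|_{\cX_z}-\Phi|_{\cX_z}|\le C(z)$ with a constant depending on $z$ (the trivialisation by $s_E$ degenerates as $z\to 0$), the difference of two psh metrics is not convex, and even for the quantity that \emph{is} convex (the fibrewise supremum against a fixed reference) convexity permits linear growth in $-\log|z|$ --- which is exactly what happens in the example. (Minor point: the extension across $\cX_0$ is justified by local boundedness above near the analytic set $\cX_0$, which is a divisor, not by it having ``real codimension two''.)

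The paper's proof sidesteps precisely this issue: it never claims $\Phi$ has global minimal singularities. Instead it takes the envelope $V_\Theta$ as the realising metric --- which has global minimal singularities by definition --- and transfers the fibrewise minimal singularities from $\Phi$ to $V_\Theta$ via the single uniform bound $V_\Theta\ge\Phi-C$ (valid because $\Phi$ is \emph{some} psh metric on $\cL$), noting that $C$ is independent of the base variable while the fibrewise upper bound is automatic. The constancy statement is then obtained not from $\Phi$ itself but from a gluing $\max(V_\Theta-Ct,\Phi)$ for a suitable constant. To repair your proposal, keep your $\Phi$ but use it only as a uniform lower bound for $V_\Theta$, and make $V_\Theta$ (or the max-modification) the realising metric.
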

\begin{proof}
Let $\theta$ be a smooth representative of $c_1(L)$ and let $\phi\in\PSH(X,\theta)$ be a metric with minimal singularities. It naturally defines an invariant psh metric $\pi_\triv^*\phi$ on the trivial test configuration $(X\times \bbp^1,\pi_\triv^*\theta)$ which restricts to $\phi$ on any fibre. Then, $\pi^*\pi_X^*\phi$ is also psh on $\pi^*\pi_\triv^*\theta$ and restricts to $\phi$ on the general fibre. 

We now write $\L-L =  \cD$ with $\cD$ a (necessarily vertical) effective $\bbq$-Cartier divisor, where again $L$ denotes the pullback to $\X$ and where effectivity follows from the definition of a big test configuration. Picking $k$ such that $k\D$ is Cartier and a defining section $s_{kD}$ for $k\D$, we then have that $\Phi:=\pi^*\pi_X^*\phi+k\mi\log|s_{k{\D}}|$ is $\cL$-psh (because $\cD$ is effective) and restricts to $\phi$ away from the central fibre, hence already satisfies $(i)$ in the definition of realisability; it also has minimal singularities away from the general fibre. Now, taking $V_\Theta$ to be the minimal singularity envelope on $\cL$,  there exists $C$ such that $V_\Theta\geq \Phi-C$. In particular, this also holds fibrewise, since $\Phi$ has fibrewise minimal singularities and $C$ is independent of the variable on the base, thereby showing that $V_\Theta$ is realising. A choice of an adequate constant $C$ shows that we can also take $\max(V_\Theta-Ct,\Phi)$ to be realising, and such that its restriction is constant (and equal to $\phi$) on the preimage of an open set away from the central fibre.
\end{proof}

\begin{remark}We note that in general the realisability assumption for metrics with minimal singularities is strong: it forces metrics with global minimal singularities to also have fibrewise minimal singularities. Indeed, assume $\Phi$ is realising and $\Psi$ simply has global minimal singularities on $L$. Then there exists $C$ such that $|\Psi-\Phi|\leq C$; in particular, for all $t$, $|\phi_t-\psi_t|\leq C$, hence $\psi_t$ also has minimal singularities on $L$. The same argument also holds for arbitrary singularity types as in Definition \ref{def_realisable}. On the level of sections, this means that one obtains a strong comparison property between relative and global sections of $\cL$, akin to what always holds in the ample case, as in e.g. \cite[Lemma A.8]{bfjsolution}, and implies a similar orthogonality property to \cite[Lemma A.5]{bfjsolution}.
\end{remark}

\subsection{Slope formula for positive Deligne pairings of metrics with minimal singularities}

In this section, we prove a general slope formula for realisable (big) test configurations. Let $L_0,\dots,L_n$ be big line bundles on $X$ and, for each $i=0,\dots,n$, we define $\alpha_i:=c_1(L_i)$. We fix smooth representatives $\theta_i$ in each $\alpha_i$ and, for each $i$, a $\theta_i$-psh function $\psi_i$ with minimal singularities.

Let $(\cX_i,\cL_i)$ be dominant test configurations for each $(X,L_i)$ with $\L_i - L_i$ effective; we can and do assume that $\cX_i=\cX$ for each $i$, with $\cX$ smooth. We also assume that the general fibres $\X_t$ for $t\neq 0$ are isomorphic to $X$; the general situation is described in Corollary \ref{coro_generalcase}, where the results are as one would expect.  We let $\Theta_i$ be an $S^1$-invariant smooth representative of $c_1(\cL_i)$ which restricts to $\theta_i$ on the  fibre $\X_1 = X$.

Let $\Phi_i$ be a realising function for $\PSH(\cX,\Theta_i)$ in the sense of Definition \ref{def_realisable}, which exists by Proposition \ref{prop_existencemin}. This allows us to define a function
$$\llangle\Phi_0,\ldots,\Phi_n \rrangle: \bbc^* \to \R$$
(using abusive notation, since it depends on the choice of the reference functions $(\psi_i)_i$,) defined as
$$\llangle\Phi_0,\ldots,\Phi_n \rrangle(z)=\llangle \phi_{0,z},\dots,\phi_{n,z}\rrangle-\llangle \psi_0,\dots,\psi_n\rrangle.$$
By the definition of realisability, this is a locally bounded function on $\bbc^*$, and therefore we can consider its $dd^c$ in the sense of Bedford--Taylor \cite{bt}. Consider the restriction of the mixed Monge-Amp\`ere measure $$\langle \Theta_0+\ddc \Phi_0,\ldots,  \Theta_n + \ddc \Phi_n\rangle$$ to $X \times \bbc^* \subset\X$. The next main goal is to produce a potential for the pushforward of this measure.

To that end, we need to compare fibrewise and global Monge--Ampère operators. This will require us to consider \textit{restrictions} of currents on fibres of $\cX$ away from zero. Restrictions of currents are not well-defined in general, as one cannot pull back arbitrary currents. However, in the case that the currents are closed $(1,1)$-forms (in particular, if they arise as Monge--Ampère measures \cite[Theorem 1.8]{begz}) on submersions, one can use \textit{slicing theory} of currents, as explained in Demailly \cite[Section 1.C, p.13]{JPD}, which contains  further details on the general theory of slicing which we employ. Let us explain how this works in our particular case. A closed $(k,k)$-form $T$ on $\cX$ is in particular a \textit{locally flat} current, i.e. over each open set $U\subset \cX$ is of the form
$$T(f)=\int_U \langle \xi,f\rangle \,d\Omega-\int_U \langle \eta,df\rangle \,d\Omega,$$
where $\Omega$ is any choice of a smooth volume form on $\cX$, $f$ is any $(k,k)$-form, and $\xi$, $\eta$ are respectively a $(k,k)$ and a $(k+1,k+1)$-vector field, both with $L^1_{\mathrm{loc}}$ coefficients (with respect to $\Omega|_U$). 

Let us now assume that $U$ is an open set of the form $U=\pi\mi(V)$ for $V$ an open set in the base. Then, up to restricting to a smaller open set on the base if necessary, we can assume we are in the product case $U=U'\times V$, with $U'$ an open set in $\bbc^n$ and $V$ an open set in $\bbc$, and with $\Omega$ the usual volume form $i^{n+1} dx_1\wedge d\overline{x_1}\wedge\dots\wedge dx_n\wedge d\overline{x_n}\wedge dz\wedge d\bar z$ on $\bbc^n\times\bbc$. Note that the integrands $\langle \xi,f\rangle$ and $\langle \eta,df\rangle$ are both $L^1$ with respect to $\Omega$, by the flat decomposition above. It follows from Fubini's Theorem that the restriction of those integrands to a fibre $U'\times\{z\}$ is well-defined and integrable for $idz\wedge d\bar z$-almost all $z\in V$ (i.e. the set for which this does not produce the restriction is measure zero with respect to $idz\wedge d\bar z$). We then define the \textit{restriction} $T_z$ of $T$ to a fibre $X_z$ by restricting to $X_z$ the integrands in the above expression for $T$.

We may now prove the desired result:

\begin{proposition}\label{prop_curvature}
Using the notation above, fix for each $i$ a realising metric $\Phi_i$ for $\cL_i$. Then we have an equality of currents on $\bbc^*$
$$\int_{X\times\C^*/\bbc^*}\langle \Theta_0+\ddc \Phi_0\wedge \ldots \wedge\Theta_n + \ddc \Phi_n\rangle = \ddc \llangle\Phi_0,\ldots,\Phi_n \rrangle.$$
\end{proposition}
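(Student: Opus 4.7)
The plan is to test both sides against a smooth compactly supported function $\chi$ on $\bbc^*$ and verify they agree as $(1,1)$-currents on the Riemann surface $\bbc^*$. Since the test configuration is trivial over $\bbc^*$, the $\bbc^*$-equivariant trivialisation lets us identify $\cX|_{\bbc^*}$ with $X\times\bbc^*$; after absorbing a smooth $\ddc$-exact correction into each $\Phi_i$ (which does not alter $\Theta_i+\ddc\Phi_i$), we may arrange $\Theta_i|_{X\times\bbc^*}=\pi_X^*\theta_i$, and set $\Psi_i:=\pi_X^*\psi_i$. By realisability, $\Phi_i-\Psi_i$ is locally bounded on $X\times\bbc^*$.

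Expanding $F(z):=\llangle\Phi_0,\ldots,\Phi_n\rrangle(z)$ via the defining telescoping formula for positive Deligne pairings yields $F=\sum_{i=0}^n F_i$, where
$$F_i(z)=\int_{X_z}(\phi_{i,z}-\psi_i)\bigl\langle\bigwedge_{j<i}(\theta_j+\ddc\psi_j)\wedge\bigwedge_{j>i}(\theta_j+\ddc\phi_{j,z})\bigr\rangle.$$
The key algebraic step is to invoke slicing theory (as recalled immediately before the statement) to recognise each $F_i$ as a fibre integral of a global current, $F_i=\pi_*[(\Phi_i-\Psi_i)\,T_i]$, where
$$T_i:=\bigl\langle\bigwedge_{j<i}(\Theta_j+\ddc\Psi_j)\wedge\bigwedge_{j>i}(\Theta_j+\ddc\Phi_j)\bigr\rangle$$
is the corresponding global mixed non-pluripolar product on $X\times\bbc^*$. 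Testing $\ddc F_i$ against $\chi$ via this identification, and applying the non-pluripolar integration by parts formula (valid since $\Phi_i-\Psi_i$ is locally bounded on $X\times\bbc^*$ and $\pi^*\chi$ is smooth with compact support in the base direction), gives
\begin{align*}
\int_{\bbc^*}\chi\,\ddc F_i &= \int_{X\times\bbc^*}(\Phi_i-\Psi_i)\,T_i\wedge\ddc\pi^*\chi\\
&= \int_{X\times\bbc^*}\pi^*\chi\,\ddc(\Phi_i-\Psi_i)\wedge T_i.
\end{align*}
Summing over $i$, the sum $\sum_i\ddc(\Phi_i-\Psi_i)\wedge T_i$ telescopes, by the very algebraic identity underlying the symmetry proof for positive Deligne pairings, to
$$\langle(\Theta_0+\ddc\Phi_0)\wedge\cdots\wedge(\Theta_n+\ddc\Phi_n)\rangle-\langle(\Theta_0+\ddc\Psi_0)\wedge\cdots\wedge(\Theta_n+\ddc\Psi_n)\rangle.$$
The all-$\Psi$ term vanishes on $X\times\bbc^*$ for purely bidegree reasons: each $\Theta_j+\ddc\Psi_j=\pi_X^*(\theta_j+\ddc\psi_j)$ is pulled back from the $n$-dimensional base $X$, so a wedge of $n+1$ such pullbacks is identically zero. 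Hence $\int_{\bbc^*}\chi\,\ddc F=\int_{X\times\bbc^*}\pi^*\chi\,\langle\bigwedge_j(\Theta_j+\ddc\Phi_j)\rangle$, which is exactly the pairing of $\pi_*\langle\bigwedge_j(\Theta_j+\ddc\Phi_j)\rangle$ with $\chi$, giving the identity.

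The main obstacle is the slicing identification $F_i=\pi_*[(\Phi_i-\Psi_i)T_i]$: non-pluripolar products do not in general commute with restriction to submanifolds, because their definition discards the pluripolar set, which is not preserved under slicing. The resolution combines the slicing theory for locally flat currents (reviewed in the text preceding the statement, via the $L^1_{\mathrm{loc}}$ flat decomposition and Fubini) with the fibrewise minimal singularity constraint provided by realisability. Concretely, one truncates each $\Phi_j$ to $\Phi_j^{(k)}:=\max(\Phi_j,V_{\Theta_j}-k)$, whose locally bounded potentials give Bedford--Taylor products which do commute with fibrewise restriction; passing to the monotone limit $k\to\infty$ is legitimate by the uniform control of $\Phi_j-\Psi_j$ on $X\times K$ for $K\subset\bbc^*$ compact, supplied by realisability together with Proposition~\ref{prop_existencemin}.
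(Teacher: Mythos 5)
Your overall route coincides with the paper's: test both sides against compactly supported functions on $\bbc^*$, identify each term of the fibrewise positive Deligne functional with the fibre integral of a global current, integrate by parts, and telescope (your normalisation $\Theta_i|_{X\times\bbc^*}=\pi_X^*\theta_i$, $\Psi_i=\pi_X^*\psi_i$, and the bidegree argument killing the all-$\Psi$ term are cosmetic variants of the paper's fibrewise telescoping, where the all-$\psi$ product vanishes for dimension reasons on the $n$-dimensional fibre). The genuine gap is at the step you yourself single out: the identification $F_i=\pi_*[(\Phi_i-\Psi_i)T_i]$, i.e.\ that the \emph{global} non-pluripolar products slice, for almost every fibre, to the \emph{fibrewise} ones. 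Your proposed justification does not work as written. First, the truncation $\Phi_j^{(k)}=\max(\Phi_j,V_{\Theta_j}-k)$ is vacuous here: a realising metric has global (and fibrewise) minimal singularities, so $\Phi_j^{(k)}=\Phi_j$ once $k$ exceeds the bound on $|\Phi_j-V_{\Theta_j}|$, and nothing is gained. Second, and more seriously, $V_{\Theta_j}$ need not be locally bounded when $\cL_j$ is merely big---minimal-singularity potentials in big classes have poles along the augmented base locus---so the truncated potentials are not ``locally bounded'' and you are not in the Bedford--Taylor setting on all of $X\times\bbc^*$. Third, even on the locus where the potentials are locally bounded, the assertion that Bedford--Taylor (or non-pluripolar) products ``commute with fibrewise restriction'' is not an off-the-shelf fact: it is exactly the content of the difficulty, and it is what the paper establishes in Lemma \ref{slicing}, by writing the slice of the locally flat current $T_i'$ as a limit of slices of arbitrary smooth $L^1_{\mathrm{loc}}$ approximations and then choosing approximations adapted to the potentials, for which restriction does commute with wedge products. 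Without an argument of this type, your pivotal identity is asserted rather than proven.

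A secondary, fixable point: you apply the non-pluripolar integration by parts formula on the noncompact $X\times\bbc^*$ with only local boundedness of $\Phi_i-\Psi_i$, whereas the formula quoted in the paper is stated on a compact manifold with globally bounded differences of potentials. The paper's device---replace $\pi_X^*\psi_i$ by realising metrics $\Psi_i$ defined on all of $\cX$, chosen via Proposition \ref{prop_existencemin} to be constant (equal to $\psi_i$) over a neighbourhood of the support of the test function, and integrate by parts on the compact manifold $\cX$---is the cleanest way to make this step rigorous; otherwise you must separately justify a local integration-by-parts statement for compactly supported test forms.
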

\begin{proof}
It suffices to show that for all functions $f$ of compact support on $\bbc^*$ that $$\int_{\bbc^*} f \int_{X\times {\bbc^*}/{\bbc^*}}\langle \Theta_0+\ddc \Phi_0\wedge\ldots\wedge  \Theta_n + \ddc \Phi_n\rangle = \int_{\bbc^*} f \ddc \llangle\Phi_0,\ldots,\Phi_n \rrangle,$$ where by definition we have  $$\int_{\bbc^*} f \int_{X\times {\bbc^*}/{\bbc^*}}\langle \Theta_0+\ddc \Phi_0\wedge\ldots\wedge  \Theta_n + \ddc \Phi_n\rangle = \int_{X\times {\bbc^*}}  f \langle \Theta_0+\ddc \Phi_0\wedge\ldots\wedge  \Theta_n + \ddc \Phi_n\rangle.$$

The definition of the current $\ddc \llangle\Phi_0,\ldots,\Phi_n \rrangle$ implies \begin{align*} \int_{\bbc^*} f \ddc \llangle\Phi_0,\ldots,\Phi_n \rrangle &=   \int_{\bbc^*}  \llangle\Phi_0,\ldots,\Phi_n \rrangle \ddc f, \\ &= \int_{\bbc^*} (\llangle \phi_{0,z},\ldots,\phi_{n,z}\rrangle-\llangle \psi_0,\ldots,\psi_n\rrangle) \ddc f.  \end{align*} This is an integral of positive Deligne functionals defined fibrewise, and we claim that we can write $$\int_{\bbc^*} (\llangle \phi_{0,z},\ldots,\phi_{n,z}\rrangle-\llangle \psi_0,\ldots,\psi_n\rrangle) \ddc f =  \int_{X\times {\bbc^*}} \ddc f\wedge\sum_i T_i',$$
where
$$T_{i}'=(\Phi_i-\Psi_i)\langle (\Theta_{0}+dd^c\Psi_0)\wedge \dots\wedge (\Theta_{i-1}+dd^c \Psi_{i-1})\wedge(\Theta_{i+1}+dd^c\Phi_{i+1})\wedge\dots\wedge(\Theta_{n}+dd^c\Phi_{n})\rangle
$$ is such that the mixed Monge--Amp\`ere operators are defined globally. Here each $\Psi_i$ is a realising psh function on $\cL_i$ which restricts to $\psi_i$ on the fibres above the support of $f$, as one can obtain from Proposition \ref{prop_existencemin}. This requires a comparison of fibrewise and global Monge-Amp\`ere operators, which we prove using slicing theory in Lemma \ref{slicing}.

More precisely, what will be proven in Lemma \ref{slicing} is that that for almost every $z \in \C^*$ in the support of $f$ (i.e. away from zero) one has the following restriction property:
\begin{equation}\label{restriction-property}T_{i,z}=T'_i|_{X_z}\end{equation}
with
$$T_{i,z}=(\phi_{i,z}-\psi_i)\langle (\theta_{0}+dd^c \psi_0)\wedge \ldots\wedge (\theta_{i-1}+dd^c \psi_{i-1})\wedge(\theta_{i+1}+dd^c\phi_{i+1,z})\wedge\ldots\wedge(\theta_{n}+dd^c\phi_{n,z})\rangle,$$ so that $T'_i$ is defined using global Monge--Ampère operators
 whereas $T_{i,z}$ is instead defined  using fibrewise Monge--Ampère operators. Here,
$T'_i|_{X_z}$
is the restriction of the current $T'_i$ to the fibre $X_z$ as explained immediately before the statement of the proposition; we recall that it is well-defined for almost every $z$ on the base. Note that with this (fibrewise) definition $$\int_{ {\bbc^*}} \ddc f \int_{X\times\C^*/\C^*}\sum_i T_{i,z} = \int_{\bbc^*} (\llangle \phi_{0,z},\ldots,\phi_{n,z}\rrangle-\llangle \psi_0,\ldots,\psi_n\rrangle) \ddc f.$$

Assuming this restriction property, by the fibre-integral formula in slicing theory  \cite[Equation (1.22)]{JPD}  we obtain
$$\int_{ {\bbc^*}} \ddc f \int_{X\times\C^*/\C^*}\sum_i T_i=\int_{X\times\C^*} \ddc f\wedge \sum_i T'_i,$$ where we have identified $f$ with its pullback to $X\times\C^* \subset \X$, which still has compact support. Since $f$ is bounded on $\cX$, and the $\Phi_i$ as well as the $\Psi_i$ have minimal singularities, we can apply non-pluripolar integration by parts on the (compact) manifold $\cX$ to obtain
\begin{align*}
&=\int_{\cX} \ddc f\wedge \sum_i T'_i\\
&=\int_{\cX} f(dd^c\Phi_i-dd^c \Psi_i)\\
&\wedge  \sum_i \langle (\Theta_{0}+dd^c\Psi_0)\wedge \ldots\wedge (\Theta_{i-1}+dd^c \Psi_{i-1})\wedge(\Theta_{i+1}+dd^c\Phi_{i+1})\wedge\ldots\wedge(\Theta_{n}+dd^c\Phi_{n})\rangle\\
&=\int_{X\times\bbc^*} f(dd^c\phi_{i,z}-dd^c \psi_i)\\
&\wedge \sum_i \langle (\theta_{0}+dd^c\psi_0)\wedge \ldots\wedge (\theta_{i-1}+dd^c \psi_{i-1})\wedge(\theta_{i+1}+dd^c\phi_{i+1,z})\wedge\ldots\wedge(\theta_{n}+dd^c\phi_{n,z})\rangle.
\end{align*}
Using multilinearity of the mixed Monge-Amp\`ere operator and the fact that $$\langle \wedge^{n+1}(\theta_i+\ddc \psi_i)\rangle = 0,$$ we see that 
\begin{align*} &(dd^c\phi_{i,z}-dd^c \psi_i)\wedge \sum_i \langle (\theta_{0}+dd^c\psi_0)\wedge \ldots\wedge (\theta_{i-1}+dd^c \psi_{i-1})\wedge(\theta_{i+1}+dd^c\phi_{i+1,z})\wedge\ldots\wedge(\theta_{n}+dd^c\phi_{n,z})\rangle \\ &=  \langle \theta_0 + \ddc \phi_{0,z},\ldots,  \theta_n + \ddc \phi_{n,z}\rangle.
\end{align*}Since in turn  by definition this quantity satisfies $$\int_{X\times\C^*/\C^*}f\langle \theta_0 + \ddc \phi_{0,z}\wedge\ldots\wedge  \theta_n + \ddc \phi_{n,z} \rangle= \int_{\bbc^*} f \int_{X\times {\bbc^*}/{\bbc^*}}\langle \Theta_0+\ddc \Phi_0\wedge\ldots \wedge \Theta_n + \ddc \Phi_n\rangle, $$ the proof is complete.
\end{proof}

We next prove the restriction property used in the proof, for which we continue employing the same notation.

\begin{lemma}\label{slicing} For almost every $z\in\C^*$, the  restriction of $T'_i$ is $T_{i,z}$: $$T_{i,z}=T'_i|_{X_z}.$$ \end{lemma}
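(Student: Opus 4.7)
The plan is to reduce, via the canonical truncation defining non-pluripolar products, to the classical slicing of Bedford--Taylor mixed Monge--Amp\`ere operators for locally bounded potentials. Throughout we restrict to the preimage of a compact set $K\Subset \bbc^*$ (e.g.\ containing the support of $f$), on which realisability, together with the remark following Proposition \ref{prop_existencemin}, forces each $\Phi_j,\Psi_j$ to have fibrewise minimal singularities with constants uniform in $z\in K$. The scalar prefactor $(\Phi_i-\Psi_i)|_{X_z}=\phi_{i,z}-\psi_i$ restricts as desired by definition of realisability, and is uniformly bounded over $z\in K$; it therefore suffices to treat the mixed non-pluripolar current
\[
N:=\bigl\langle\bigwedge_{j\ne i}(\Theta_j+dd^c U_j)\bigr\rangle,\qquad U_j=\Psi_j\text{ for }j<i,\ U_j=\Phi_j\text{ for }j>i.
\]

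The first step is the \emph{bounded case}. For each integer $k\ge 1$ and $j\ne i$, put $U_j^k:=\max(U_j, V_{\Theta_j}-k)$, a bounded $S^1$-invariant $\Theta_j$-psh function, and let $N^k:=\bigwedge_{j\ne i}(\Theta_j+dd^c U_j^k)$ in the sense of Bedford--Taylor. By Demailly's slicing theorem \cite[\S1.C, (1.22)]{JPD}, applied inductively to the factors, for each fixed $k$ there is a null set $\cN_k\subset\bbc^*$ outside which the restriction $N^k|_{X_z}$ exists and equals the Bedford--Taylor mixed product of the restrictions $U_j^k|_{X_z}$. Realisability (with uniform constants on $K$) then identifies $U_j^k|_{X_z}$ with a fibrewise truncation of $u_{j,z}\in\{\phi_{j,z},\psi_j\}$ at a level that differs from $k$ by a bounded amount, independent of $z\in K$.

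The second step is \emph{plurifine locality}. Let $E_k:=\bigcap_{j\ne i}\{U_j>V_{\Theta_j}-k\}$, a plurifine open subset of $\cX$ on which all potentials $U_j$ agree with $U_j^k$. By plurifine locality of non-pluripolar products \cite[Proposition 1.4]{begz}, $\mathbf 1_{E_k}\,N=\mathbf 1_{E_k}\,N^k$ on $\cX$, and the analogous identity holds fibrewise. Since the indicator $\mathbf 1_{E_k}$ restricts on each $X_z$ to the indicator of the corresponding fibrewise plurifine open set (realisability again), combining with the first step gives, for every $z\in\bbc^*\setminus\cN_k$,
\[
\mathbf 1_{E_k\cap X_z}\bigl(N|_{X_z}\bigr)=\bigl\langle\bigwedge_{j\ne i}(\theta_j+dd^c u_{j,z})\bigr\rangle \cdot \mathbf 1_{E_k\cap X_z},
\]
that is, the claimed equality after truncation at level $k$.

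Setting $\cN:=\bigcup_k\cN_k$, still null, the above identity holds for \emph{every} $k$ whenever $z\in\bbc^*\setminus\cN$. By the very definition of non-pluripolar products as monotone limits of truncations \cite[Definition 1.1]{begz}, the left-hand side increases to $N|_{X_z}$ while the right-hand side increases to $T_{i,z}/(\phi_{i,z}-\psi_i)$; since the limits are monotone limits of positive measures, they are genuinely determined pointwise and the identity passes to the limit. Multiplying by the bounded function $\phi_{i,z}-\psi_i$ yields $T_i'|_{X_z}=T_{i,z}$, as required. The main obstacle is step two: we need plurifine locality and slicing to be compatible, which reduces to the observation that $E_k$ is cut out fibrewise by the pullback-compatible potentials $U_j$ and envelopes $V_{\Theta_j}$, precisely because of the uniform fibrewise bounds granted by realisability on compact subsets of $\bbc^*$.
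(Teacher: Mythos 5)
Your overall architecture is genuinely different from the paper's: you reduce to the bounded case via the canonical truncations defining non-pluripolar products, then use plurifine locality and monotone limits, whereas the paper works directly with the flat-current description of $T'_i$, shows that its slices can be computed from \emph{arbitrary} smooth $L^1_{\mathrm{loc}}$ approximations (not just mollifications), and then chooses the approximations to come from smoothing the potentials $\Phi_i$, where restriction trivially commutes with wedge products. The problem is that your reduction buries the entire difficulty in your ``first step'': Demailly's slicing theory, and in particular the fibre-integral formula (1.22) of \cite{JPD}, gives existence of slices of flat currents and how their fibre integrals recover the current, but it does \emph{not} assert that slicing commutes with Bedford--Taylor products, i.e.\ that $N^k|_{X_z}$ equals the Bedford--Taylor product of the restricted truncated potentials for a.e.\ $z$. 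That commutation is essentially the content of the lemma in the bounded case, and ``applied inductively to the factors'' does not resolve it: the inductive step would require knowing that the slice of $U_j^k\,S$ is $U_j^k|_{X_z}\,S|_{X_z}$ for a merely bounded quasi-continuous multiplier, which is exactly the kind of statement that needs an approximation argument (and is what the paper's regularisation step supplies). The same issue reappears at the very end, where you multiply the slice of $N$ by $\phi_{i,z}-\psi_i$ and identify the result with the slice of $(\Phi_i-\Psi_i)N$; for a non-continuous (only bounded, quasi-continuous) factor this identification is not automatic.

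There is also a smaller unaddressed point in the limiting step: to conclude that $\mathbf{1}_{E_k\cap X_z}\bigl(N|_{X_z}\bigr)$ increases to $N|_{X_z}$ you need that, for a.e.\ $z$, the slice puts no mass on the fibrewise pluripolar set $X_z\setminus\bigcup_k E_k$. This is plausible via a Fubini-type argument using the fibre-integral formula and the fact that the non-pluripolar product on the total space charges no pluripolar sets, but it is asserted rather than proved. In short, the truncation-plus-plurifine-locality skeleton could be made to work, but as written it presupposes the key slicing commutation that the paper proves by smooth approximation of the potentials; without an argument for that step (or a precise citation that actually contains it), the proposal has a genuine gap.
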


\begin{proof}
As explained in \cite[p.171]{demBook}, the slice $T'_i|_{X_z}$ is defined for almost every  $z\in\bbc^*$, and can be obtained as a limit of genuine restrictions $\lim_{\varepsilon\to 0}T'_{i,\varepsilon}|_{X_z}$, where $T'_{i,\varepsilon}\to T'_i$ is a sequence of regularisations of $T'_i$ \textit{via} mollifiers.

We claim that this approximation process applies for an arbitrary choice of smooth forms converging to $T'_i$ in $L^1_{\mathrm{loc}}$. Assuming this is true for the moment, we may pick a smooth approximation $\Phi_{i,\varepsilon}$ of each $\Phi_i$, so that defining $T_{i,\varepsilon,z}$ and $T'_{i,\varepsilon}$ as in the previous proposition, replacing the $\Phi_i$ with $\Phi_{i,\varepsilon}$, one has for all $z$ on the base
$$T_{i,\varepsilon,z}=T'_{i,\varepsilon}|_{X_z}$$
as pull-backs commute with exterior derivatives (in the smooth case, as is the case here), and on taking the limit in $\varepsilon$, it follows from the first paragraph that the desired comparison property
$$T_{i,z}=T'_i|_{X_z}$$
holds for almost every $z$ on the base.

We now prove our previous claim. For simplicity, let us write $T_i=T$, and choose smooth approximations $S_\varepsilon,T_\varepsilon\to T$ in $L^1_{\mathrm{loc}}$. Decomposing $T$ locally on $U\times X$ as explained before the proposition, we write $T=\xi+\eta V$, and $S_\varepsilon=\xi_{S_\varepsilon}+d\eta {S_\varepsilon}$, $T_\varepsilon=\xi_{T_\varepsilon}+d\eta_{T_\varepsilon}$. Then, $T|_{X_z}$ is defined for almost every $z$ via
$$T|_{X_z}(f)=\int_{X_z}(\langle \xi,f\rangle-\langle \eta,df\rangle)|_{X_z} \,d\Omega|_{X_z}$$
for $f\in \mathrm{Im}(C^\infty(U\times X)\to C^\infty(X_z))$, and $\Omega$ a given smooth form on $U\times X$. By convergence of $S_\varepsilon$ and $T_\varepsilon$, it follows that $\xi_{S_\varepsilon},\xi_{T_\varepsilon}\to \xi$ and likewise for $\eta$, hence
$$S_\varepsilon|_{X_z}(f),T_\varepsilon|_{X_z}(f)\to T|_{X_z}(f),$$
concluding the proof.
\end{proof}

Assuming the functions $\Phi_i$ are $S^1$-invariant, we can in fact identify their restriction to $X\times\bbd^*$ with a function
$$t\mapsto \phi_{i,t}:=\phi_{i,-\log|z|}$$
for $t\in[0,\infty)$. Similarly, the function $\llangle \Phi_0,\dots,\Phi_n\rrangle$ can be seen as a function on $[0,\infty)$. We then obtain a slope formula as desired.
\begin{theorem}\label{thm-deligne-slope}
Under the assumptions of Proposition \ref{prop_curvature}, there is a slope formula $$\langle \cL_0\cdot\ldots\cdot\cL_n\rangle = \lim_{t\to\infty}  \frac{\llangle\Phi_0,\ldots,\Phi_n \rrangle(t)}{t}.$$
\end{theorem}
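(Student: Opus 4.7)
The plan is to use the $S^1$-invariance of the realising metrics to reduce Proposition \ref{prop_curvature} to a one-dimensional statement about a convex function, and then match the resulting total mass to the positive intersection product on $\cX$ via non-pluripolarity of the Monge--Amp\`ere product.

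Write $G(z) := \llangle\Phi_0,\ldots,\Phi_n\rrangle(z)$. Since each $\Phi_i$ is $S^1$-invariant, so is $G$, and we may therefore write $G(z) = g(-\log|z|) = g(t)$ for some function $g$ on $\bbr$. By Proposition \ref{prop_curvature}, $\ddc G$ agrees on $\bbc^*$ with the fibre-integrated non-pluripolar Monge--Amp\`ere $\pi_* \langle \prod(\Theta_i + \ddc \Phi_i)\rangle$, which is a positive measure; as an $S^1$-invariant subharmonic function corresponds to a convex function of $t = -\log|z|$, this forces $g$ to be convex on $\bbr$. Using the final assertion of Proposition \ref{prop_existencemin}, we may furthermore choose the $\Phi_i$ to be constant (equal to the pullback of $\psi_i$ from $X$) on the preimage of $\{|z|\geq 1\}$, so that $G \equiv 0$ on $\{|z|\geq 1\}$, i.e.\ $g \equiv 0$ on $(-\infty,0]$. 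That the resulting limit $\lim_{t\to\infty} g(t)/t$ is independent of this particular choice follows from the remark after Proposition \ref{prop_existencemin}: any two realising metrics with minimal singularities differ by a uniformly bounded function, and the change of metric formula for positive Deligne functionals then produces a bounded change in $G$, which is negligible after dividing by $t\to\infty$.

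A convex function $g$ on $\bbr$ vanishing on $(-\infty,0]$ satisfies $\lim_{t\to\infty} g(t)/t = \lim_{t\to\infty}g'(t)$. Applying Stokes' theorem on annuli $\{\varepsilon \leq |z| \leq 1\}$ to the $S^1$-invariant function $G$ and passing to the limit $\varepsilon \to 0$ identifies this limit with the total mass $\int_{\bbc^*} \ddc G$, with the normalisation of $d^c$ used throughout the paper. It remains to identify this total mass with $\langle\cL_0\cdots\cL_n\rangle$. By Proposition \ref{prop_curvature} and Fubini,
\[
\int_{\bbc^*}\ddc G \;=\; \int_{X\times\bbc^*} \langle (\Theta_0+\ddc\Phi_0)\wedge\cdots\wedge(\Theta_n + \ddc\Phi_n)\rangle.
\]
On $X\times\{|z|\geq 1\}$ the $\Phi_i$ are pullbacks from $X$, so the mixed Monge--Amp\`ere there consists of $n+1$ currents pulled back from the $n$-dimensional variety $X$, hence vanishes. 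Extending the right-hand integral to all of $\cX$ is harmless, since the fibres $\cX_0$ and $\cX_\infty$ are pluripolar in $\cX$ and thus carry no mass for the non-pluripolar product; using the $\bbc^*$-equivariant identification $\cX_{|\bbc^*}\cong X\times\bbc^*$ we conclude $\int_{\bbc^*}\ddc G = \int_\cX \langle\prod(\Theta_i+\ddc\Phi_i)\rangle = \langle \cL_0\cdots\cL_n\rangle$ by definition of the (analytic) positive intersection product for minimal-singularity representatives.

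The main technical ingredient is the pushforward identity of Proposition \ref{prop_curvature}, already in hand. Beyond this, the delicate point is bookkeeping the normalisation factors in the one-dimensional Stokes formula for $S^1$-invariant subharmonic functions so that the constant relating $\int_{\bbc^*} \ddc G$ to $\lim g(t)/t$ matches the convention already fixed in Proposition \ref{prop_curvature}; modulo this, the argument is simply a one-dimensional slice of the pluripotential theory recalled in Section 2.
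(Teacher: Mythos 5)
Your proposal is correct and follows essentially the same route as the paper: identify $\langle\cL_0\cdots\cL_n\rangle$ with the total non-pluripolar mass on $\X$ (using global minimal singularities), discard the pluripolar fibres $\X_0,\X_\infty$, apply Proposition \ref{prop_curvature} to rewrite the fibre integral as $\ddc\llangle\Phi_0,\ldots,\Phi_n\rrangle$, and conclude by the one-variable $S^1$-invariance computation. The only difference is that where the paper cites the polar-coordinate calculation of Berman (\cite[Lemma 2.6]{bermankpoly}), you carry it out by hand via convexity after normalising the realising metrics to be pullbacks near $z=\infty$; this is fine, noting only that your bounded-perturbation step implicitly uses the (easily verified, by the same integration-by-parts telescoping as in the symmetry proof) cocycle property of differences of positive Deligne pairings together with the uniform finiteness of the fibrewise mixed Monge--Amp\`ere masses.
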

\begin{proof}
Since the currents $\Theta_i+\ddc \Phi_i$ have minimal singularities, we know that $$\langle \cL_0\cdot\ldots\cdot\cL_n\rangle = \int_{\X} \langle (\Theta_0+\ddc \Phi_0) \wedge\dots\wedge (\Theta_n+\ddc \Phi_n)\rangle .$$ The mixed Monge--Amp\'ere measure puts no mass on pluripolar sets, so we have \begin{align*}\int_{\X }  \langle (\Theta_0+\ddc \Phi_0) \wedge\dots\wedge (\Theta_n+\ddc \Phi_n)\rangle &= \int_{\X_{\C^*}}  \langle (\Theta_0+\ddc \Phi_0) \wedge\dots\wedge (\Theta_n+\ddc \Phi_n)\rangle, \\ &= \int_{X\times \C^*} \langle (\Theta_0+\ddc \Phi_0) \wedge\dots\wedge (\Theta_n+\ddc \Phi_n)\rangle,  \\ &= \int_{\C^*} \ddc \llangle\Phi_0,\ldots,\Phi_n \rrangle,  \\ &=   \lim_{t\to\infty}  \frac{\llangle\Phi_0,\ldots,\Phi_n \rrangle(t)}{t},\end{align*} where the second-to-last equality follows from Proposition \ref{prop_curvature} and the last equality follows from a calculation in polar coordinates as in \cite[Lemma 2.6]{bermankpoly}. The result follows.\end{proof}

We end by briefly discussing the situation when each $\Phi_i \in \PSH(\X,\Theta_i)$ is not necessarily taken to have minimal singularities.  Here the constructions go through in a very  similar manner, and using the same notation along with the results of Section \ref{sect_23} produces the following corollary:

\begin{corollary}\label{coro_generalcase} For an $S^1$-invariant metric $\Phi \in \PSH(\X,\Theta)$ which has fixed singularity type $[\phi_z]=[\phi]$ away from the central fibre and such that $(\cX,\cL)$ is $([\phi],[\Phi])$-realisable, we have $$\lim_{k\to\infty} \frac{n!\dim H^0(\X,k\L\otimes\J(k\Phi))}{k^{n+1}} = \lim_{t\to\infty}  \frac{E_{[\phi_0]}\langle\phi_t\rangle}{t},$$
where the Monge--Ampère energy on the right-hand side is the energy relative to the singularity type $[\phi_0]$. 

More generally, for $S^1$-invariant metrics $\Phi_i \in \PSH(\X,\Theta_i)$ with fixed singularity types $[\phi_{i,z}]=[\phi_i]$ away from the central fibre and such that $\cL_i$ is for each $i$ a $([\psi_i],[\Psi_i])$-realisable test configuration for $L_i$, we have $$ (\L_0 - \divisor_{\underline X} \Phi_0) \cdot \ldots \cdot (\L_n - \divisor_{\underline X} \Phi_n) = \lim_{t\to\infty}  \frac{\llangle\Phi_0,\ldots,\Phi_n \rrangle_{[\phi_0],\dots,[\phi_n]}(t)}{t}.$$
where the right-hand side is a relative positive Deligne pairing obtained by polarising the definition of the Monge--Ampère energy relative to a singularity type.
\end{corollary}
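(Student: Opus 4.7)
The plan is to re-run the proof of Theorem \ref{thm-deligne-slope} verbatim at the level of potentials, but to replace the ``minimal singularities'' interpretation of the resulting non-pluripolar intersection on $\cX$ with the algebraic identifications for general singularity types supplied in Section \ref{sect_23}: the Darvas--Xia formula (Theorem \ref{DX-volume}) in the single-bundle case, and Xia's Riemann--Zariski intersection formula \cite[Theorem 10.13]{xia-vb} in the mixed case. The first task is therefore to extend Proposition \ref{prop_curvature} to the setting where each $\Phi_i \in \PSH(\cX,\Theta_i)$ only has \emph{fibrewise} singularity type $[\phi_i]$ away from the central fibre, assuming $(\cX_i,\cL_i)$ is $([\phi_i],[\Phi_i])$-realisable.

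First, I would fix for each $i$ an auxiliary $S^1$-invariant $\Theta_i$-psh $\Psi_i$ realising the singularity type $[\phi_i]$ fibrewise (obtained exactly as in Proposition \ref{prop_existencemin}, with the minimal envelope replaced by the corresponding envelope with prescribed singularities). Realisability ensures that $\Phi_i - \Psi_i$ is locally bounded on $X \times \bbc^*$. Consequently the function $z \mapsto \llangle \phi_{0,z},\ldots,\phi_{n,z}\rrangle - \llangle \psi_0,\ldots,\psi_n\rrangle$ is locally bounded on $\bbc^*$, so its $\ddc$ makes sense in the Bedford--Taylor sense, and the non-pluripolar integration-by-parts formula (Proposition 2.1) applies to differences of the $\Phi_i$ and $\Psi_i$ since their differences are bounded. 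The slicing argument of Lemma \ref{slicing} carries over unchanged, as it only uses smoothness of approximants and commutation of pullback with $d$ in the smooth category. The conclusion is the identity
\[
\int_{X\times \bbc^*/\bbc^*}\langle (\Theta_0+\ddc \Phi_0)\wedge\ldots\wedge(\Theta_n+\ddc \Phi_n)\rangle = \ddc \llangle \Phi_0,\ldots,\Phi_n\rrangle
\]
as currents on $\bbc^*$, with the convention that the fibrewise positive Deligne functional is taken relative to the singularity types $[\phi_i]$.

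Next, since the central fibre $\cX_0$ is a divisor and thus pluripolar, the non-pluripolar mixed mass $\langle (\Theta_0+\ddc \Phi_0)\wedge\ldots\wedge(\Theta_n+\ddc \Phi_n)\rangle$ puts no mass on it, and the polar-coordinate computation as in \cite[Lemma 2.6]{bermankpoly} (together with $S^1$-invariance) yields
\[
\int_{\cX}\langle (\Theta_0+\ddc \Phi_0)\wedge\ldots\wedge(\Theta_n+\ddc \Phi_n)\rangle = \lim_{t\to\infty}\frac{\llangle \Phi_0,\ldots,\Phi_n\rrangle_{[\phi_0],\ldots,[\phi_n]}(t)}{t}.
\]
For the mixed statement it then suffices to invoke \cite[Theorem 10.13]{xia-vb} (stated in Section \ref{sect_23}) to identify the left-hand side with the Riemann--Zariski intersection number $(\cL_0 - \divisor_{\underline \cX}\Phi_0)\cdot\ldots\cdot(\cL_n - \divisor_{\underline \cX}\Phi_n)$; this requires each $\Phi_i$ to be $\J$-model with strictly positive Monge--Amp\`ere mass, which I would include as part of the realisability hypothesis (or by passing to $\J$-model envelopes, whose singularity types coincide with those of the original metrics). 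For the single-bundle case, specialising all $\Phi_i$ to the same $\Phi$ and using that the left-hand side is then the Monge--Amp\`ere mass $\int_\cX \MA_\Theta \langle \Phi\rangle$ (which, when $\Phi$ is $\J$-model, equals the algebraic volume by Theorem \ref{DX-volume}) produces the claimed formula $\lim_k \tfrac{(n+1)!\dim H^0(\cX,k\cL \otimes \J(k\Phi))}{k^{n+1}}$ on the left and $\lim_t E_{[\phi_0]}\langle \phi_t\rangle/t$ on the right.

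The main obstacle I anticipate is bookkeeping around singularity types on the total space versus fibrewise, specifically ensuring that the auxiliary metrics $\Psi_i$ produced from realisability really give bounded differences $\Phi_i - \Psi_i$ on the preimage of any compact set in $\bbc^*$, so that the Bedford--Taylor $\ddc$ and the non-pluripolar integration-by-parts formula both apply. Provided this is granted via realisability, the remaining steps are essentially a mechanical adaptation of Proposition \ref{prop_curvature} and Theorem \ref{thm-deligne-slope}.
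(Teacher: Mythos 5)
Your proposal follows essentially the same route as the paper, which in fact only sketches this corollary: rerun Proposition \ref{prop_curvature} and Theorem \ref{thm-deligne-slope} for the given singularity types, using realisability to get the bounded differences needed for Bedford--Taylor and non-pluripolar integration by parts, and then identify the total non-pluripolar mass on $\X$ via Theorem \ref{DX-volume} and Xia's Riemann--Zariski intersection formula from Section \ref{sect_23}. Two small corrections are needed. First, your single-bundle conclusion is off by a factor of $n+1$: the slope of $E_{[\phi_0]}$ is $(n+1)^{-1}\int_\X \MA_\Theta\langle\Phi\rangle$ because of the $(n+1)^{-1}$ in the definition of the Monge--Amp\`ere energy, so combining with Theorem \ref{DX-volume} on the $(n+1)$-dimensional total space yields $\lim_k n!\dim H^0(\X,k\L\otimes\J(k\Phi))/k^{n+1}$, as in the statement, not $(n+1)!$. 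Second, the parenthetical fallback of ``passing to $\J$-model envelopes, whose singularity types coincide with those of the original metrics'' does not work: $P_{\J}(\Phi)$ has the same multiplier ideals as $\Phi$ (hence the same divisorial Lelong numbers and the same $b$-divisor $\divisor_{\underline \X}\Phi$), but in general \emph{not} the same singularity type in the bounded-difference sense, and its Monge--Amp\`ere mass can be strictly larger, so replacing $\Phi$ by its envelope changes the analytic quantity computed by the slope while leaving the algebraic side fixed. The correct move is the one you propose first, namely to impose the $\J$-model (and strictly positive mass) hypothesis so that the cited results of Section \ref{sect_23} apply; with that, and the fibrewise-versus-global boundedness bookkeeping you rightly flag (which the paper also leaves implicit), your argument is the paper's.
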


The latter slope is an invariant of the $b$-singularity type determined by $\Phi$.

\subsection{Finite-energy spaces and birational equivalence}

In order to study Ding-stability for dominant test configurations, we will need to consider psh functions living in energy spaces over different manifolds. The following result explains how to relate such energy classes.

\begin{theorem}[{\cite[Theorem 3.5]{DNFT}, \cite[Remark 2.4]{dinezzastability}}]\label{thm_pullbackenergy}
Let $\pi:Y\to X$ be a birational morphism. Let $L_Y$ be a big line bundle on $Y$, and let $\theta_Y\in c_1(L_Y)$ be a smooth representative. Then,
$$\cE^1(Y,\theta_Y)=\cE^1(X,\pi_*\theta_Y)$$
if and only if
$$\vol(\theta_Y)=\vol(\pi_*\theta_Y)$$
and
$$\{\theta_Y\}=\pi^*\pi_*\{\theta_Y\}+E.$$
and this identification preserves the property of having minimal singularities. If $\pi:Y\to X$ is a blowup with smooth centre and exceptional $\bbq$-divisor $E$, and $s_{cE}$ is a defining section for $cE$ with $c$ such that $cE$ is Cartier, then $\phi\in\PSH_{\mathrm{min}}(X,\pi_*\theta_Y)$ is identified with
$$\pi^*\phi + c\mi\log|s_{cE}|.$$
\end{theorem}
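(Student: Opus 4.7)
My approach is to construct the identification explicitly in the blowup case, reduce the general case to a composition of such blowups, and check necessity of the two conditions separately.

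For the sufficiency direction in the blowup case, given $\phi \in \PSH(X,\pi_*\theta_Y)$, the candidate identification is
\[
\Phi := \pi^*\phi + c\mi \log|s_{cE}|.
\]
The first step is to verify $\Phi \in \PSH(Y,\theta_Y)$: the function $\pi^*\phi$ is $\pi^*\pi_*\theta_Y$-psh because $\pi$ is holomorphic, while $c\mi \log|s_{cE}|$ is $c_1(E)$-psh, and the cohomological decomposition $\{\theta_Y\} = \pi^*\pi_*\{\theta_Y\} + E$ guarantees their sum is $\theta_Y$-psh after absorbing smooth Chern form data into the smooth reference. The second and crucial step is to compare non-pluripolar products: since non-pluripolar products put no mass on pluripolar sets (in particular along the exceptional divisor, where $c\mi \log|s_{cE}|$ blows up), one gets
\[
\pi_*\langle(\theta_Y + dd^c \Phi)^n\rangle = \langle(\pi_*\theta_Y + dd^c \phi)^n\rangle
\]
as measures on $X \setminus \pi(E)$, and both sides have the same total mass precisely when $\vol(\theta_Y) = \vol(\pi_*\theta_Y)$. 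This mass equality, combined with the Monge--Amp\`ere energy being expressible as a difference of positive Deligne functionals, forces $E_{\theta_Y}\langle\Phi\rangle = E_{\pi_*\theta_Y}\langle\phi\rangle$ up to a constant depending only on the reference data, so finite energy is preserved.

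For the reverse direction, given $\Psi \in \cE^1(Y,\theta_Y)$, I would set $\phi := \pi_*(\Psi - c\mi \log|s_{cE}|)$ (interpreted appropriately, since the subtraction regularises the exceptional singularity), and show it is $\pi_*\theta_Y$-psh with finite energy by reversing the above computation. Preservation of minimal singularities is then essentially automatic: the map sends the minimal singularity envelope to the minimal singularity envelope because it is a bijection respecting the partial order $\leq$ and finite maxima (and $\log|s_{cE}|$ is bounded away from the exceptional locus), and a potential has minimal singularities exactly when it stays within $L^\infty$ distance of the envelope. The general birational case reduces to this by factoring $\pi$ as a composition of blowups with smooth centres (using resolution of singularities and the structure theorem for birational morphisms between smooth varieties) and composing the resulting identifications.

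For necessity, I would argue directly: if $\cE^1(Y,\theta_Y) = \cE^1(X,\pi_*\theta_Y)$ as sets, then the constant function $0$ (which is always in both spaces and corresponds to minimal singularity potentials) has equal Monge--Amp\`ere mass on both sides, giving the volume equality $\vol(\theta_Y) = \vol(\pi_*\theta_Y)$. For the cohomological condition, the pullback of any minimal singularity potential on $X$ must lift to a $\theta_Y$-psh function on $Y$; writing $\{\theta_Y\} - \pi^*\pi_*\{\theta_Y\}$ in terms of a basis of exceptional divisors and testing against these pullbacks forces this difference to be representable by an effective divisor $E$ supported in the exceptional locus.

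\textbf{Main obstacle.} The central technical difficulty is the careful analysis of non-pluripolar products under birational pullback, in particular showing that mass concentrated on the exceptional divisor does not contribute to either total mass or the Monge--Amp\`ere energy comparison. This relies essentially on the plurifine locality of non-pluripolar products and the fact that $\log|s_{cE}|$ is locally bounded away from $E$; verifying that the identification is truly a bijection (rather than just an injection) at the level of energy classes, especially in the non-minimal singularity regime, is the most delicate point and is where the volume hypothesis plays its decisive role.
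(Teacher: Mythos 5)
The paper does not actually prove this statement---it is imported from \cite[Theorem 3.5]{DNFT} and \cite[Remark 2.4]{dinezzastability}---so you are measured against those proofs, and your outline does follow the same basic strategy (the explicit lift $\pi^*\phi + c^{-1}\log|s_{cE}|$, comparison of non-pluripolar masses and energies away from the exceptional set, mass considerations for necessity). The crux, however, is the inverse direction, and there your parenthetical ``the subtraction regularises the exceptional singularity'' is exactly the point that needs proof. Given $\Psi\in\PSH(Y,\theta_Y)$, to make sense of $\pi_*(\Psi-c^{-1}\log|s_{cE}|)$ you must either show that $\theta_Y+dd^c\Psi\geq[E]$, equivalently that $\Psi-c^{-1}\log|s_{cE}|$ is quasi-psh (this uses effectivity and $\pi$-exceptionality of $E$ via the support theorem: $T-\pi^*\pi_*T$ is a closed $(1,1)$-current supported on the exceptional locus in the class of $E$, hence equals $[E]$), or else descend the function only on $Y\setminus E\cong X\setminus\pi(E)$, invoke removability of the codimension-$\geq 2$ set $\pi(E)$ for psh functions, and then check that $\pi^*\phi+c^{-1}\log|s_{cE}|$ agrees with $\Psi$ almost everywhere, hence everywhere. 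Without one of these arguments the bijection at the level of $\PSH$ spaces is unproved, and both your ``essentially automatic'' preservation of minimal singularities and your energy comparison rest on it. Relatedly, your claim that the total masses agree ``precisely when $\vol(\theta_Y)=\vol(\pi_*\theta_Y)$'' misplaces the hypothesis: the non-pluripolar masses of $\phi$ and of its lift are \emph{always} equal (the product ignores $E$); the volume equality is what makes this common mass maximal upstairs, i.e.\ what puts the lift in the full-mass class over $Y$, and conversely is what obstructs $\cE^1(Y,\theta_Y)=\cE^1(X,\pi_*\theta_Y)$ when the volumes differ.

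Two further gaps. First, your reduction of the general case to a composition of blowups with smooth centres is not available: in dimension $\geq 3$ a given birational morphism need not factor through smooth blowups (weak factorization only yields a zig-zag through other varieties, not a factorization of $\pi$), so the general statement must be argued directly, using only that $\pi$ is an isomorphism off an exceptional locus whose image has codimension $\geq 2$; note the explicit $\log|s_{cE}|$ formula is asserted in the theorem only for the smooth-centre blowup. Second, the necessity direction is too thin as written: the constant function $0$ is in general \emph{not} $\theta_Y$-psh for a smooth representative of a big class (one should compare $V_{\theta_Y}$ and $V_{\pi_*\theta_Y}$ and use that non-pluripolar masses are preserved by the correspondence but capped by the respective volumes), and the effectivity of $E=\{\theta_Y\}-\pi^*\pi_*\{\theta_Y\}$ needs an actual argument---for instance comparing generic Lelong numbers along each exceptional component of a full-mass current on $Y$ with those of the pullback of its pushforward---rather than ``testing against these pullbacks.''
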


\begin{remark}
Through this identification, it is clear that the proof of Theorem \ref{thm-deligne-slope} goes through in the same manner, producing slope formulas for Deligne functionals on general test configurations whose central fibre is given by a pair $(Y,L_Y)$ birationally equivalent to $(X,L)$.
\end{remark}

\subsection{Slope formula for the Ding energy}

We assume from now on that $-K_X$ is big, and that $X$ is klt. We set $L:=-K_X$ for notational convenience. We then consider $\pi:(Y,L_Y)\to (X,L)$ a birational equivalence, and $$\Pi:(\cY,\cL_\cY)\to (X\times\bbp^1,L\times\bbp^1)$$ a big and dominant test configuration for $\pi: Y \to X$ with smooth total space. Fixing a smooth representative $\Theta_Y$ for $c_1(\cL_\cY)$, let $\Phi_\cY$ be a $S^1$-invariant psh metric in $\PSH(\cY,\Theta_\cY)$  with relative minimal singularities away from the central fibre. We now prove a slope formula for the Ding energy along $\Phi_\cY$. The strategy is similar to Berman's work in the ample case \cite{bermankpoly}. Suppose $r>0$ is such that $r\L_\cY$ is Cartier. We will consider throughout the $\Q$-line bundle $$r\mi(\pi_{\bbp^1}\circ\Pi)_*(r(\L_\Y+K_{\cY/\pr^1}))$$ (using Lemma \ref{lem_pushfwd}); but to ease notation we will assume throughout that $r=1$. The proofs in the general case only cause notational difficulty.

\bigskip Let $s$ be a trivialising section of $(\pi_{\bbp^1}\circ\Pi)_*(\L_\Y+K_{\cY/\pr^1})$ over a neighbourhood $U$ of $0$ (which we can assume to be a disc) in $\bbp^1$, and let $\cY_U$ be the restriction of $\cY$ to this neighbourhood. We may thus realise $s$ as a section of $(\L_\Y+K_{\cY/\pr^1})$ over $\cY_U$, and we may assume that its restriction to a fibre $s_t=s|_{\cY_t}$ is constant in $t$ when identified as a section of $L_Y+K_Y$ through the identification $L_Y+K_Y\cong \cL_\cY+K_{\cY/\bbp^1}$. As in Section \ref{sect_canmeasures}, since for $t\neq 0$ we obtain a section $s_t$ of the adjoint bundle $L_Y+K_Y$, we can associate to $s_t$ and $\phi_{\cY,t}$ a measure
$$\mu_{\cY,t}:=|s|^2e^{-\phi_{\cY,t}}.$$
We then define
$$v(t):=\int_{Y}\mu_{\cY,t}.$$
Note that $\phi_{\cY,t}$ can be identified with a psh metric $\phi_t$ on $-K_X$ with minimal singularities in $\PSH(X,\theta)=\PSH(X,\pi_*\theta_Y)$ via Theorem \ref{thm_pullbackenergy}. It defines again as in Section \ref{sect_canmeasures} a canonical measure $\mu_t:=e^{-\phi_t}$, so that $$-\log\int_X \mu_t= L\langle\phi_t\rangle.$$
\begin{lemma}\label{lem_invarianceding}
For all $t$, we have
$$v(t)=\int_X \mu_t.$$
\end{lemma}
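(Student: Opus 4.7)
The plan is to identify $\mu_{\cY,t}$ with $\mu_t$ under the birational morphism $\pi: Y \to X$ over the open set on which $\pi$ is an isomorphism; the complements will contribute no mass.

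First, I would analyse the section $s_t$. Since $s$ trivialises the line bundle $(\pi_{\bbp^1} \circ \Pi)_*(\L_\cY + K_{\cY/\bbp^1})$ of Lemma \ref{lem_pushfwd}, the restrictions $s_t$ are constant in $t$ as elements of $H^0(Y, L_Y + K_Y)$. The proof of that lemma identifies this space with $H^0(X, \scO_X) = \bbc$ via the decomposition $L_Y + K_Y = E + F$, where $E$ is the $\pi$-exceptional $\bbq$-divisor in $L_Y = \pi^*(-K_X) + E$ and $F = K_Y - \pi^* K_X$ is the discrepancy of $\pi$, both of which are supported on $\mathrm{Exc}(\pi)$. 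In particular, $L_Y + K_Y$ is canonically trivial over $\pi^{-1}(V)$ for $V := X \setminus \pi(\mathrm{Exc}(\pi))$, and after rescaling $s$ once and for all I may assume that $s_t$ corresponds to the constant section $1 \in H^0(X, \scO_X)$ under this canonical trivialisation.

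Next, I would perform a local computation over $V$. By Theorem \ref{thm_pullbackenergy}, the identification of $\phi_{\cY, t}$ with $\phi_t$ reduces on $\pi^{-1}(V)$ to $\phi_{\cY, t}|_{\pi^{-1}(V)} = \pi^*\phi_t$, the $c^{-1}\log|s_{cE}|$ correction vanishing where $E$ is trivial. In local coordinates on $V$ and their pullbacks to $\pi^{-1}(V)$, the canonical trivialisations of $-K_X$ and $K_X$ pull back to those of $L_Y = \pi^*(-K_X)$ and $K_Y = \pi^* K_X$, and the normalisation $s_t \equiv 1$ eliminates any overall constant. The local expressions defining $\mu_{\cY, t}$ and $\mu_t$ (as in Section \ref{sect_canmeasures}) therefore match up to the expected pullback of volume forms, giving $\mu_{\cY, t}|_{\pi^{-1}(V)} = \pi^* (\mu_t|_V)$ as measures.

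Finally, I would conclude by a standard measure-theoretic argument. Both $\mathrm{Exc}(\pi) \subset Y$ and $\pi(\mathrm{Exc}(\pi)) \subset X$ are proper Zariski closed subsets, hence of Lebesgue measure zero, while $\mu_{\cY, t}$ and $\mu_t$ are absolutely continuous with respect to Lebesgue measure off their polar loci (and are extended by zero across those loci by construction). The change of variables formula applied to the isomorphism $\pi : \pi^{-1}(V) \to V$ then yields
\[
v(t) = \int_Y \mu_{\cY, t} = \int_{\pi^{-1}(V)} \mu_{\cY, t} = \int_V \mu_t = \int_X \mu_t,
\]
as claimed. The main technical obstacle is the careful bookkeeping of trivialisations in the previous step: once one recognises that the isomorphism $H^0(Y, L_Y + K_Y) \cong H^0(X, \scO_X)$ is, pointwise over $V$, precisely the trivialisation of $L_Y + K_Y$ induced by those of $L_Y$ and $K_Y$, no extra Jacobian factor appears and the local equality of measures is essentially tautological.
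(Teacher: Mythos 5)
Your proposal is correct and follows essentially the same route as the paper's own (much terser) proof: identify $\phi_{\cY,t}$ with $\pi^*\phi_t$ plus the exceptional correction via Theorem \ref{thm_pullbackenergy}, observe that the two measures agree where $\pi$ is an isomorphism, and note that the exceptional/polar loci carry no mass. Your extra care with the trivialising section $s_t$ (normalising it to correspond to $1\in H^0(X,\scO_X)$) just makes explicit a normalisation the paper leaves implicit, and is a welcome clarification rather than a deviation.
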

\begin{proof}
As explained in Theorem \ref{thm_pullbackenergy}, we have $\phi_{\cY,t}=\pi^*\phi_t+c\mi \log|s_{cE}|$, with $E$ the exceptional divisor of $\pi$, $c$ such that $cE$ is Cartier, and $s_{cE}$ a defining section for $cE$. In particular, by construction, the measures $\mu_{\cY,t}$ and $\mu_t$ coincide on the set where $\pi$ is an isomorphism, and are extended by zero away from this set, proving the lemma.
\end{proof}

\begin{theorem}[P\u{a}un--Takayama]\label{thm_pshding}The function $v(t)$ is subharmonic on a neighbourhood of $0$ in $\pr^1$.\end{theorem}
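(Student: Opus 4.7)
The plan is to realise $v(t)$ as the squared $L^2$-norm of the canonical trivialising section $s$ of the pushforward line bundle, and then to invoke the Berndtsson--P\u{a}un theorem on positivity of direct images in the singular form due to P\u{a}un--Takayama, as indicated by the attribution in the statement.

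First I would make the identification explicit. By Lemma \ref{lem_pushfwd}, $E := (\pi_{\pr^1}\circ \Pi)_*(\L_\cY+K_{\cY/\pr^1})$ is a line bundle on $\pr^1$, trivialised over the disc $U$ by the section $s$. The singular psh metric with weight $\Phi_\cY$ on $\cL_\cY$ canonically induces an $L^2$-metric $h_{L^2}$ on $E$, whose value on a local section $u$ is the fibre integral of $|u|^2 e^{-\phi_{\cY,t}}$ over $\cY_t$. Specialising to $u=s$ yields
\[ h_{L^2}(s,s)(t) = v(t). \]
The klt hypothesis on $-K_X$, combined with Lemma \ref{lem_invarianceding}, guarantees $v(t)<\infty$ for $t\neq 0$, so this $L^2$-metric is well-defined over $U\setminus\{0\}$, and $s$ is a non-vanishing holomorphic section there.

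Second I would apply P\u{a}un--Takayama: given a singular psh metric on a line bundle over the total space of a projective family, the induced $L^2$-metric on $\Pi_*(\L_\cY+K_{\cY/\pr^1})$ is a positively curved singular hermitian metric on the base. In our trivialisation by $s$, the local weight of $h_{L^2}$ is $-\log v(t)$, so positive curvature unwinds to the asserted subharmonic behaviour of $v$ on $U\setminus\{0\}$. Extending subharmonicity across the central fibre $\{t=0\}$ is then carried out by the standard removal-of-singularities argument (the exceptional locus being pluripolar, hence removable for subharmonic functions bounded above).

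The main technical hurdle is verifying the hypotheses of P\u{a}un--Takayama in our singular, big setting: one must check that $\Phi_\cY$ has sufficient fibrewise regularity for the direct-image $L^2$-construction to apply, and that the resulting $L^2$-metric is locally bounded above near $t=0$ so that its weight is genuinely psh rather than identically $-\infty$. The former follows from the assumption that $\Phi_\cY$ has relative minimal singularities away from the central fibre (so that fibrewise each $\phi_{\cY,t}$ has minimal singularities on $L_Y$, ensuring the relevant multiplier ideal sheaf is the trivial one by the klt assumption on $-K_X$), while the latter is a direct consequence of finiteness of $v(t)$ for $t\neq 0$. Once these verifications are in place, the conclusion of the theorem is exactly the content of the positivity of direct images theorem.
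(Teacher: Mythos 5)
Your route is the same as the paper's: realise the relevant function through the natural $L^2$-metric on the line bundle $(\pi_{\pr^1}\circ\Pi)_*(\L_\cY+K_{\cY/\pr^1})$ of Lemma \ref{lem_pushfwd} and invoke P\u{a}un--Takayama. However, your final unwinding inverts the conclusion of that theorem. Positivity of the curvature of $h_{L^2}$ says that the local \emph{weight} of the metric in the holomorphic frame $s$, namely $-\log h_{L^2}(s,s)(t)=-\log\int_Y|s|^2e^{-\phi_{\cY,t}}$, is subharmonic; it says nothing of the kind about $h_{L^2}(s,s)$ itself, and subharmonicity of $-\log v$ does not imply subharmonicity of $v$ (if anything it gives that $1/v=e^{-\log v}$ is subharmonic, since the exponential of a subharmonic function is subharmonic). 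So the sentence ``the local weight of $h_{L^2}$ is $-\log v(t)$, so positive curvature unwinds to the asserted subharmonic behaviour of $v$'' is a non sequitur with your normalisation of $v$ as the squared norm. The paper's proof treats $v$ as the \emph{potential} of the $L^2$-metric, i.e. $v=-\log\int_Y|s|^2e^{-\phi_{\cY,t}}$ --- this is also the reading used downstream, where $e^{-v(z)}$ appears as the fibrewise mass in the proof of Theorem \ref{thm_slopeding1} --- and then the subharmonicity is exactly \cite[Theorem 1.1]{pauntakayama}. What your argument actually establishes is subharmonicity of $-\log v$ in your notation, and the discrepancy needs to be resolved rather than glossed over.

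The extension across the central fibre is also not justified as written. The removable-singularity theorem for subharmonic functions on a punctured disc requires the function to be locally bounded \emph{above} near the puncture; for the weight $-\log v$ this amounts to a positive lower bound on $\int_Y|s|^2e^{-\phi_{\cY,t}}$ near $z=0$, which does not follow from finiteness of $v(t)$ for $t\neq 0$ (finiteness only gives that the weight is $>-\infty$ at each such point, i.e. a bound on the wrong side). That upper bound near the central fibre is precisely the nontrivial content one is citing: \cite[Theorem 1.1]{pauntakayama} directly produces a positively curved singular hermitian metric on the direct image over the whole base, central fibre included, so no hand-made patching is needed; alternatively one argues via an Ohsawa--Takegoshi extension theorem as in \cite[Lemma 3.2]{bermankpoly}, as the paper notes in the remark following Theorem \ref{thm_pshding}. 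If you apply the direct-image theorem only over $U\setminus\{0\}$ and then try to patch, the appeal to ``the exceptional locus is pluripolar'' does not supply the missing boundedness.
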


\begin{proof} This follows directly from a general result of P\u{a}un--Takayama \cite{pauntakayama}. To see this, one views $v(t)$ as the potential for the natural $L^2$-metric on $(\pi_{\bbp^1}\circ\Pi)_*(\L_\Y+K_{\cY/\pr^1})$, to which \cite[Theorem 1.1]{pauntakayama} implies the desired subharmonicity. \end{proof}

\begin{remark} As in Berman \cite[Lemma 3.2]{bermankpoly}, This result also follows from earlier work of Berndtsson--P\u{a}un \cite{bpaun}, arguing instead similarly to Berman \cite[Lemma 3.2]{bermankpoly} using a version of the Ohsawa--Takegoshi extension theorem. \end{remark}

Since $v$ is subharmonic, it has a well-defined Lelong number at zero. By construction, $v$ is also $S^1$-invariant, so that its Lelong number at $0$ is then computed by the slope at infinity
$$\nu_0(v)=\lim_{t\to\infty}\frac{\log(v(t))}{t}.$$
We claim that this slope at infinity is in turn computed by a complex singularity exponent associated to a canonical measure, in the following sense.
\begin{definition}
As in the previous discussion, the data of a section of $(\L_\Y+K_{\cY/\pr^1})$ over $\cY_U$ yields a measure $|s|^2e^{-\Phi_\cY}$ away from the polar locus of $\Phi_\cY$, which we extend by zero there; this measure extends the family $t\mapsto \mu_{\cY,t}$ considered before, over the central fibre. Given any continuous $(n+1,n+1)$-current $\Phi$ seen as a measure $\mu_\Phi$ on $\cX$, let us denote $e^{-\gamma_\cY}$ the Radon--Nikodym derivative of $|s|^2e^{-\Phi_\cY}$ with respect to $\mu_\Phi$. We then define the \textit{complex singularity exponent}
$$c_{\cY_0}(\cY,\Delta_\cY,\Phi_\cY) = \sup\{c \in \R, \exists\textrm{ a neighbourhood } U\textrm{ of }\cY_0 \textrm{ with } e^{-c\gamma}\in L^1_{\mathrm{loc}}(U, \mu_\Phi)\}$$
with $\Delta_\cY:=-\cL_\cY-K_{\cY/\bbp^1}$.
\end{definition}
\begin{theorem}\label{thm_slopeding1}
There is an equality 
$$\lim_{t\to\infty}\frac{v(t)}{t}=1 - c_{\cY_0}(\cY,\Delta_\cY,\Phi_\cY).$$
\end{theorem}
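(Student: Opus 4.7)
The strategy follows Berman's approach in the Fano case \cite{bermankpoly}, reinterpreting the slope at infinity on the left-hand side as a Lelong number at $0 \in \pr^1$ and matching it with the singularity exponent $c_{\cY_0}$. First, by Theorem \ref{thm_pshding} the function $-\log v$ is psh on a neighbourhood of $0 \in \pr^1$, and by the $S^1$-invariance of $v$ (inherited from that of $\Phi_\cY$) its Lelong number $\nu_0$ at $0$ coincides with the slope at infinity under the change of variable $t = -\log|z|$: concretely, $\nu_0 = \lim_{t\to\infty}\log v(t)/t$, in line with the shorthand $\nu_0(v)$ introduced just before the statement of the theorem. Thus the claim reduces to the identification $\nu_0 = 1 - c_{\cY_0}$.

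To connect $\nu_0$ and $c_{\cY_0}$, the main bridge is Fubini's theorem. Unfolding the definition of $v(t) = \int_Y |s_t|^2 e^{-\phi_{\cY,t}}$ and that of the bulk measure $|s|^2 e^{-\Phi_\cY}$ on $\cY$, one obtains, for $r > 0$ small, the identity
\[
\int_{\{|z|<r\}} v(-\log|z|)\, dA(z)\ =\ \int_{\pi^{-1}(\{|z|<r\})} |s|^2 e^{-\Phi_\cY},
\]
which reduces a fibrewise growth problem to the study of the singularity of a global measure near $\cY_0$. By the definition of $c_{\cY_0}$, the local integrability of $|s|^2 e^{-\Phi_\cY} = e^{-\gamma_\cY}\mu_\Phi$ near $\cY_0$ is sharply controlled by $c_{\cY_0}$: indeed, $\int e^{-c\gamma_\cY}\, d\mu_\Phi$ is finite in a neighbourhood of $\cY_0$ precisely when $c < c_{\cY_0}$.

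To upgrade this integrability comparison at $c = 1$ to the sharp identification $\nu_0 = 1 - c_{\cY_0}$, the key technical tool---following Berman in the ample case \cite[Section 3]{bermankpoly}---is the Ohsawa--Takegoshi extension theorem: given a section realising the extremal $L^2$ bound on a fibre close to $\cY_0$, one extends it to a neighbourhood with controlled $L^2$ growth, pinning down the Lelong number of $-\log v$ against the singularity exponent. The main obstacle in our setting is that $\Phi_\cY$ merely has minimal singularities rather than being continuous as in the ample case, so Ohsawa--Takegoshi cannot be applied directly. I would overcome this by approximating $\Phi_\cY$ from below by metrics with analytic singularities via Demailly regularisation, applying Ohsawa--Takegoshi for each approximation, and then passing to the limit, invoking the algebro-geometric interpretations of mixed Monge--Amp\`ere masses from Section \ref{sect_23} and the independent work of Darvas--Xia and Trusiani cited there to ensure the continuity of both $\nu_0$ and $c_{\cY_0}$ under such approximations.
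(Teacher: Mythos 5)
Your opening moves match the paper's: subharmonicity of the relevant $S^1$-invariant potential is taken from Theorem \ref{thm_pshding}, the slope at infinity is read as a Lelong number/asymptotic slope at $0\in\pr^1$, and the fibrewise integrals are related to the measure $|s|^2e^{-\Phi_\cY}$ on the total space by Fubini. The gap is in how you propose to obtain the \emph{sharp} identification. Ohsawa--Takegoshi plays no role in this theorem: in Berman's work (and in the Remark following Theorem \ref{thm_pshding} here) it is only an alternative route to the subharmonicity statement, which you have already granted yourself. What actually closes the argument is more elementary: insert into the pushforward identity the one-parameter family of weights $f(z)=e^{(1-c)\log|z|^2}$ pulled back from the base, giving $\int_{\cY_U}e^{(1-c)\log|z|^2}e^{-\gamma_\cY}\,\mu_\Phi=\int_U e^{-v(z)}e^{(1-c)\log|z|^2}\,i\,dz\wedge d\bar z$ for every $c$. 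This converts the whole family of integrability conditions defining the complex singularity exponent on the total space into integrability conditions for the potential on the base, and the latter threshold equals the slope at infinity by the standard computation for $S^1$-invariant subharmonic functions (\cite[Proposition 3.8]{bermankpoly}), which is exact precisely because the potential is convex in $t=-\log|z|$. Your version only exploits the identity at the single exponent $c=1$, which cannot pin down the slope, and no extension theorem is needed to ``upgrade'' it.

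Even taken on its own terms, the repair you sketch does not go through. Demailly regularisation replaces the weight in $|s|^2e^{-\Phi_\cY}$, and you would need both $c_{\cY_0}(\cY,\Delta_\cY,\Phi_\cY)$ and the slope $\nu_0$ to converge along the approximation. The results you invoke from Section \ref{sect_23} compute Monge--Amp\`ere masses and volumes, not integrability thresholds; singularity exponents are in general only semicontinuous under such approximations, and convergence of the Lelong number of the direct-image potential is likewise not automatic. You also do not specify which sections Ohsawa--Takegoshi is meant to extend, nor which of the two inequalities this would yield. Given Theorem \ref{thm_pshding}, the theorem follows directly from the weighted Fubini identity above together with the definition of $c_{\cY_0}$; the approximation machinery is both unjustified and unnecessary.
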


\begin{proof}
A general computation as in \cite[Proposition 3.8]{bermankpoly} shows that
$$\lim_{t\to\infty}\frac{v(t)}{t}=\inf \left \{ c\in \R: \int_U e^{-v(z)}e^{(1-c)\log|z|^2}idz\wedge d\bar z<\infty\right\}.$$
while viewing $v$ as a potential for the $L^2$-metric as in Theorem \ref{thm_pshding}, we see that for any function $f$ on $U$ we have 
$$\int_{\Y_U}(\pi_U\circ\Pi)^*f\,e^{-\gamma_\cY}\mu_\Phi=\int_{\Y_U}(\pi_U\circ\Pi)^*f\,|s|^2e^{-\Phi_\cY}= \int_U e^{-v(z)}f(z) idz \wedge d\bar z.$$
Setting $f(z) = \exp{(1-c)\log|z|^2}$ we then have
$$\int_{\Y_U}e^{(1-c)\log|z|^2}e^{-\gamma_\cY}\mu_\Phi= \int_U e^{-v(z)}e^{(1-c)\log|z|^2} idz \wedge d\bar z.$$
But 
$$\inf\left\{c\in\bbr,\,\int_{\Y_U}e^{(1-c)\log|z|^2}e^{-\gamma_\cY}\mu_\Phi<\infty\right\}=1-c_{\cY_0}(\cY,\Delta_\cY,\Phi_\cY),$$
concluding the proof.
\end{proof}

\begin{remark} We emphasise that we do not need $\Phi_{\Y}$ to have global minimal singularities for this result to hold, so along with Corollary \ref{coro_generalcase} we obtain a very general slope formula for the Ding functional.\end{remark}

\begin{corollary}\label{coro_slopeding2}
If $\Phi_\cY$ has global minimal singularities, then
$$\lim_{t\to\infty}\frac{v(t)}{t}=1 - \mathrm{lct}((\cY,\Delta_\cY,\|\cL_\cY\|);\cY_0).$$
\end{corollary}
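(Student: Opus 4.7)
By Theorem \ref{thm_slopeding1}, it suffices to prove that
$$c_{\cY_0}(\cY, \Delta_\cY, \Phi_\cY) = \lct((\cY,\Delta_\cY,\|\cL_\cY\|);\cY_0)$$
whenever $\Phi_\cY$ has global minimal singularities on $\cL_\cY$; that is, under minimal singularities the analytic complex singularity exponent should coincide with the algebraic log canonical threshold.

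First I would give a local description of $c_{\cY_0}$. Near a point of $\cY_0$, choose local coordinates on $\cY_U$ together with a local trivialisation of $r(\cL_\cY + K_{\cY/\bbp^1})$. Under this trivialisation, the section $s$ becomes a holomorphic function $f$ whose zero divisor is $r\Delta_\cY|_{\cY_U}$, and $\Phi_\cY$ becomes a psh potential $\varphi_\cY$; taking $\mu_\Phi$ to be a smooth volume form, a direct computation of the Radon--Nikodym derivative shows that $e^{-c\gamma_\cY}\,d\mu_\Phi$ is, up to bounded smooth factors, $|f|^{2c/r}e^{-c\varphi_\cY}\,dV_0$ for the standard volume form $dV_0$. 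Hence $c_{\cY_0}(\cY,\Delta_\cY,\Phi_\cY)$ coincides with the supremum of $c > 0$ for which $|f|^{2c/r}e^{-c\varphi_\cY}$ is locally integrable in some neighbourhood of $\cY_0$, which in turn is the largest $c$ such that the analytic mixed multiplier ideal $\J(c\Phi_\cY + c\Delta_\cY)$ is trivial near $\cY_0$. Away from $\cY_0$ the section $s$ is trivialising and $\Phi_\cY$ has minimal singularities, so the integrand is bounded; thus the only obstruction to integrability comes from neighbourhoods of $\cY_0$, and this is exactly what the $c\cY_0$-perturbation in the algebraic lct is designed to track.

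Next I would apply the Guan--Zhou openness theorem (Theorem \ref{gz-openness}): since $\Phi_\cY$ has global minimal singularities on $\cL_\cY$, one has $\J(c\Phi_\cY) = \J(c\|\cL_\cY\|)$ for all $c \geq 0$. Both ideals are governed by conditions of the form $\ord_E(\cdot) \geq (1+\epsilon)\ord_E\|\cL_\cY\| - A(E)$ over divisorial valuations $E$ on birational models of $\cY$, so the identification passes to the mixed setting incorporating $\Delta_\cY$ and $c\cY_0$ via the valuative definition of multiplier ideals in Section \ref{sec:dingstability}. Consequently, triviality of $\J(c\Phi_\cY + c\Delta_\cY)$ near $\cY_0$ is equivalent to triviality of $\J((\cY, \Delta_\cY, \|\cL_\cY\|); c\cY_0)$, and taking the supremum over $c$ yields the desired equality.

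The principal obstacle I anticipate is the first step: identifying the purely analytic exponent $c_{\cY_0}$ with the supremum over a mixed multiplier ideal condition carrying the correct coefficient on $\cY_0$. This requires a careful local analysis matching the Jacobian appearing under a log resolution with the log discrepancy $A_{(\cY, N)}(E)$ in the valuative definition, and verifying that the locality in a neighbourhood of $\cY_0$ in the analytic side corresponds cleanly to the $c\cY_0$ perturbation on the algebraic side. Once this is in place, the identification follows directly from Theorems \ref{thm_slopeding1} and \ref{gz-openness}.
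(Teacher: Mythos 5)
Your reduction (via Theorem \ref{thm_slopeding1}, to the identity $c_{\cY_0}(\cY,\Delta_\cY,\Phi_\cY)=\lct((\cY,\Delta_\cY,\|\cL_\cY\|);\cY_0)$) is exactly the paper's, but the bridge you build between the two sides has a genuine gap at its crux. The algebraic log canonical threshold here is \emph{defined} valuatively, via the condition $\ord_E(f)\geq(1+\epsilon)(\ord_E\|\cL_\cY\|+c\,\ord_E P)-A_{(\cY,N)}(E)$ of Equation \eqref{valuative-MIS}; the analytic exponent $c_{\cY_0}$ is defined by local integrability near $\cY_0$. What is needed is a two-sided statement converting integrability of the mixed weight (the psh metric $\Phi_\cY$ twisted by the analytic-singularity factors coming from $s$ and the central fibre) into a valuative condition on Lelong numbers and log discrepancies. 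Theorem \ref{gz-openness} does not supply this: it only asserts the equality of the \emph{unmixed} sheaves $\J(c\Phi_\cY)=\J(c\|\cL_\cY\|)$, and your sentence that ``both ideals are governed by conditions of the form $\ord_E(\cdot)\geq(1+\epsilon)\ord_E\|\cL_\cY\|-A(E)$'' is true by definition only on the algebraic side --- on the analytic side it is precisely the theorem to be invoked. The paper closes exactly this gap by citing the valuative criterion for integrability (Boucksom's Corollary 10.14 in the $L^2$ notes), applied with the analytic-singularity part playing the role of $N$, combined with the fact that minimal singularities force $\nu_E(\Phi_\cY)=\ord_E\|\cL_\cY\|$ for all divisorial valuations $E$. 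Your route could be completed instead by taking a log resolution of the analytic-singularity part and then applying strong openness/Guan--Zhou on each model, but as written the decisive step is asserted rather than proved.

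Two smaller bookkeeping points. First, with the paper's convention $\Delta_\cY=-\cL_\cY-K_{\cY/\pr^1}$, the divisor of the trivialising section $s$ is $-r\Delta_\cY$, so the local weight is the multiplier-ideal weight of $c(\Phi_\cY-\tfrac{1}{r}\log|f|^2)$ rather than of $c\Phi_\cY+c\Delta_\cY$ as you wrote; this is a sign slip but it matters when you match against $A_{(\cY,N)}(E)$, where $N$ is the anti-effective part. Second, in the threshold you extract locally the parameter $c$ scales the whole weight, whereas in $\lct((\cY,\Delta_\cY,\|\cL_\cY\|);\cY_0)$ the parameter sits only on the central fibre $\cY_0$ (it multiplies $\ord_E P$ with $P$ essentially $\cY_0$); reconciling these two parametrisations, as in Berman's computation and in the proof of Theorem \ref{thm_slopeding1} where the central fibre enters through the factor $e^{(1-c)\log|z|^2}$, is part of the work your local analysis must do and is currently elided.
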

\begin{proof}

The valuative criterion for integrability \cite[Corollary 10.14]{boucksom-l2} states that, given psh functions $\phi, \psi$ on a complex manifold $X$ such that $\psi$ has analytic singularities, then around a point $x \in X$ we have the local integrability condition $\exp(\psi - \phi) \in L^2_{loc}$ around $x$ if and only if there exists an $\epsilon>0$ such that $$\nu_E(\psi) \geq (1+\epsilon) \nu_E(\phi) - A_X(E)$$ for all prime divisors $E$ over $X$ whose image in $X$ contains the given point $x \in X$. Translating this into our valuative definition of multiplier ideal sheaves given as Equation \eqref{valuative-MIS} (where $\psi$ corresponds to the negative part $N$ of the divisor used there, which thus has analytic singularities), and hence into our valuative definition of the log canonical threshold, this precisely proves 
$$\mathrm{lct}((\cY,\Delta_\cY,\|\cL_\cY\|);\cY_0)=c_{\cY_0}(\cY,\Delta_\cY,\Phi_\cY),$$ which implies the result by Theorem \ref{thm_slopeding1}.
\end{proof}

\subsection{The main theorem}

We now bring together the various results. We let $(X,-K_X)$ be such that $-K_X$ is big, and we fix a reference metric on $-K_X$ with minimal singularities. We assume $\pi:Y\to X$ is a birational equivalence and $\Pi:(\X,\L)\to (X \times \pr^1,-K_X)$ is a dominant big test configuration for $\pi$ with smooth total space, and let $\Phi$ be a singular $S^1$-invariant psh metric on $\L$ which has global and relative minimal singularities. We set $D(t):=D\langle \phi_t\rangle$, $J(t):=J\langle\phi_t\rangle$, $E(t):=E\langle\phi_t\rangle$.

\begin{corollary}\label{coro_slopes} The slope formulas $$\lim_{t \to \infty} \frac{D(t)}{t} = \Ding(\X,\L)$$ and $$\lim_{t \to \infty} \frac{J(t)}{t} = J(\X,\L)$$ hold. \end{corollary}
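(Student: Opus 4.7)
The plan is to derive both slope formulas by decomposing the ray-energies into their natural algebraic constituents and invoking the two principal slope results already proved: Theorem \ref{thm-deligne-slope} for positive Deligne functionals, and Corollary \ref{coro_slopeding2} for the $L$-contribution. The realisability hypothesis that $\Phi$ has both global and fibrewise minimal singularities is exactly what both results require, and is available via Proposition \ref{prop_existencemin}, which applies because $(\X,\L)$ is big and dominant.

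For the Ding slope, I would write $D(\phi_t) = L(\phi_t) - V^{-1} E(\phi_t)$ where $V = \vol(-K_X)$. Corollary \ref{coro_slopeding2}, applied to $\Phi$, gives the slope of the $L$-term in terms of the log canonical threshold $\lct((\X,\Delta,\|\L\|);\X_0)$, producing the lct piece of $\Ding(\X,\L)$. For the $E$-piece, using the definition $E_\theta\langle\phi\rangle = (n+1)^{-1}(\llangle\phi^{n+1}\rrangle - \llangle V_\theta^{n+1}\rrangle)$ and the fact that $\llangle V_\theta^{n+1}\rrangle$ is constant in $t$, Theorem \ref{thm-deligne-slope} applied with all $\L_i = \L$ and all $\Phi_i = \Phi$ yields $\lim_{t\to\infty} E(\phi_t)/t = (n+1)^{-1}\langle\L^{n+1}\rangle$. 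Combining the two slopes recovers exactly the two terms of $\Ding(\X,\L)$.

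For the $J$-slope, I would use the Deligne-pairing identity $J_\theta\langle\phi\rangle = (\llangle\phi, V_\theta^n\rrangle - \llangle V_\theta^{n+1}\rrangle) - E_\theta\langle\phi\rangle$. The parenthesised piece is itself a difference of positive Deligne pairings, and Theorem \ref{thm-deligne-slope} applies with $\L_0 = \L$ (the nontrivial test configuration) and $\L_1 = \ldots = \L_n = p^*L_Y$ (the pullback of $L_Y$ via the dominant morphism $p : \X \to Y \times \bbp^1 \to Y$, corresponding to the trivial test configuration for $L_Y$). The realising metrics in slots $1,\ldots,n$ are the $S^1$-invariant pullbacks of $V_\theta$, which retain fibrewise minimal singularities by Theorem \ref{thm_pullbackenergy} combined with Proposition \ref{prop_existencemin}. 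The resulting slope $\langle\L \cdot L_Y^n\rangle$, minus the $E$-slope, reproduces $J(\X,\L)$ (with the overall volume normalisation encoded in the algebraic definition).

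The main obstacle is verifying that Theorem \ref{thm-deligne-slope} genuinely applies in the \emph{mixed} regime where the $\L_i$ come from distinct test configurations. The fix is to work on a common $\C^*$-equivariant smooth model dominating both $\X$ and $Y \times \bbp^1$; since $(\X,\L)$ is dominant by hypothesis, $\X$ itself (possibly after a further equivariant smooth resolution) serves as such a model. On this model, the pullback of $V_\theta$ is a realising metric for the pulled-back trivial test configuration, so the hypotheses of Theorem \ref{thm-deligne-slope} are satisfied and the slope equals the stated intersection number on $\X$. Birational invariance of positive intersection products and of finite-energy spaces (Theorem \ref{thm_pullbackenergy}) then ensures the answer is independent of the chosen resolution, and the remaining bookkeeping (constancy of $\llangle V_\theta^{n+1}\rrangle$ in $t$, and the standard identification of fibrewise and global slopes) is routine.
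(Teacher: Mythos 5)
Your proposal is correct and follows essentially the same route as the paper: decompose $D$ into the $L$-term and the normalised Monge--Amp\`ere energy, obtain the $E$- and $J$-slopes from Theorem \ref{thm-deligne-slope} applied to realising metrics with minimal singularities, obtain the lct term from Corollary \ref{coro_slopeding2}, and add. The only difference is that you spell out the mixed-pairing application for the $J$-slope (with $\cL_0=\cL$ and the remaining slots the pulled-back trivial data), which is already covered by the hypotheses of Theorem \ref{thm-deligne-slope} and is left implicit in the paper's proof.
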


\begin{proof} 
By Theorem \ref{thm-deligne-slope} and the fact that metrics with minimal singularities compute positive intersection numbers, we have $$\lim_{t \to \infty} \frac{E(t)}{t} = \frac{\langle \L^{n+1}\rangle}{n+1},\,\qquad\lim_{t \to \infty} \frac{J(t)}{t} = J(\X,\L).$$
In turn, Corollary \ref{coro_slopeding2} provides $$\lim_{t\to\infty}\frac{v(t)}{t}=1 - \mathrm{lct}((\cY,\Delta_\cY,\|\cL_\cY\|);\cY_0)$$
in the notation of the previous section; adding these proves the slope formula for the Ding functional.
\end{proof}

\begin{corollary} The following direction holds in the Yau--Tian--Donaldson conjecture:

\begin{enumerate}[(i)]
\item   if $X$ admits a K\"ahler--Einstein metric, it is Ding semistable;
\item  if $X$ admits a unique K\"ahler--Einstein metric, it is uniformly Ding stable.
\end{enumerate}
 \end{corollary}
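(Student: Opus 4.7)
The plan is to combine the analytic coercivity result (Theorem \ref{thm_dingcoercive}), the realisability of big test configurations (Proposition \ref{prop_existencemin}), and the slope formulas (Corollary \ref{coro_slopes}) in a direct way. Throughout, I fix a big test configuration $(\X,\L)$ for $(X,-K_X)$ and wish to bound its Ding invariant from below.

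\emph{Reduction to smooth dominant test configurations.} By Proposition \ref{prop:suff-dominant}, the Ding invariant and the minimum norm are preserved under passage to a smooth, $\C^*$-equivariant resolution of indeterminacy of the natural birational map $X \times \pr^1 \dashrightarrow \X$, and the resulting test configuration remains big. I therefore replace $(\X,\L)$ by such a dominant and smooth model without loss of generality. By Proposition \ref{prop_existencemin}, this test configuration is realisable by an $S^1$-invariant psh metric $\Phi$ on $\L$ with global minimal singularities. Writing $\Phi$ on $X \times \C^*$ via the $\C^*$-action as a ray $t \mapsto \phi_t$, each $\phi_t \in \PSH_{\min}(X,\theta) \subset \cE^1(X,\theta)$.

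\emph{Part (i): Ding semistability.} Assuming a K\"ahler--Einstein current $\theta + \ddc \phike$ exists, Theorem \ref{thm_dingmin} shows that $\phike$ minimises the Ding functional on $\cE^1(X,\theta)$. Thus $D_\theta\langle \phi_t\rangle \geq D_\theta\langle \phike\rangle$ for all $t$, so $\liminf_{t\to\infty} D_\theta\langle \phi_t\rangle/t \geq 0$. By Corollary \ref{coro_slopes}, this limit equals $\Ding(\X,\L)$, yielding $\Ding(\X,\L) \geq 0$.

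\emph{Part (ii): Uniform Ding stability.} If K\"ahler--Einstein metrics are unique, then Theorem \ref{thm_dingcoercive} gives constants $\delta > 0$ and $C$ such that $D_\theta\langle\phi\rangle \geq \delta J_\theta\langle\phi\rangle - C$ for all $\phi \in \cE^1_0(X,\theta)$. Both $D_\theta$ and $J_\theta$ are invariant under shifts $\phi \mapsto \phi + c$: for $D_\theta$ this follows from the corresponding invariance of $L_\theta$ and $V^{-1}E_\theta$; for $J_\theta$ it follows because $\int_X \MA_\theta\langle V_\theta\rangle = V$ cancels the shift in $E_\theta$. Since each $\phi_t$ has minimal singularities, $c_t := \sup_X \phi_t$ is finite, and $\tilde\phi_t := \phi_t - c_t \in \cE^1_0(X,\theta)$. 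Applying coercivity to $\tilde\phi_t$ and using translation invariance:
\begin{equation*}
D_\theta\langle \phi_t\rangle = D_\theta\langle \tilde\phi_t\rangle \geq \delta J_\theta\langle \tilde\phi_t\rangle - C = \delta J_\theta\langle \phi_t\rangle - C.
\end{equation*}
Dividing by $t$, taking $t \to \infty$, and invoking both slope formulas in Corollary \ref{coro_slopes} gives $\Ding(\X,\L) \geq \delta J(\X,\L)$. Finally, by the norm comparison Lemma \ref{NA-IJ-uniform} one has $J(\X,\L) \geq n^{-1} \|(\X,\L)\|_m$, so $\Ding(\X,\L) \geq (\delta/n) \|(\X,\L)\|_m$, which is uniform Ding stability with constant $\epsilon = \delta/n$.

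The substantive work has already been done in the earlier sections: the analytic coercivity of Theorem \ref{thm_dingcoercive}, the subharmonicity of $v(t)$ via Berndtsson--P\u{a}un (underlying the Ding slope formula), and the positive Deligne functional machinery yielding the volume slope for $E_\theta$. The final assembly here is purely formal once one has verified that the normalisation $\phi_t \mapsto \tilde\phi_t$ is compatible with both the slope formulas and the coercivity estimate; this is the only point that requires any care, and it is settled by translation invariance of $D_\theta$ and $J_\theta$.
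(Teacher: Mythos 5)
Your overall strategy (coercivity plus slope formulas plus the norm comparison of Lemma \ref{NA-IJ-uniform}, with translation invariance of $D$ and $J$ handling the sup-normalisation) is the same as the paper's, and that part of the assembly is fine. But there is a genuine gap: you treat every big test configuration as if its general fibre were $(X,-K_X)$ itself, writing the ray as $\phi_t\in\PSH_{\mathrm{min}}(X,\theta)\subset\cE^1(X,\theta)$ and applying Theorem \ref{thm_dingcoercive} directly. In this paper a test configuration is attached to a birational equivalence $p:(Y,L_Y)\to(X,-K_X)$, and the $\C^*$-equivariant trivialisation away from $0$ identifies the general fibre with $(Y,L_Y)$, not with $(X,-K_X)$; likewise the resolution of indeterminacy in Proposition \ref{prop:suff-dominant} is of $Y\times\pr^1\dashrightarrow\X$, not $X\times\pr^1\dashrightarrow\X$. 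So the ray produced by a realising metric lives in $\cE^1(Y,\theta_Y)$ with $\theta_Y\in c_1(L_Y)$, where $L_Y$ is merely a big line bundle with $p_*L_Y=-K_X$ and the same volume, and the coercivity (respectively boundedness below) of the Ding functional, which Theorems \ref{thm_dingcoercive} and \ref{thm_dingmin} give on $\cE^1(X,\theta)$, cannot be invoked on that ray without justification. Your argument as written only establishes the stability inequalities for test configurations whose birational equivalence is trivial ($Y=X$), which is strictly weaker than Ding semistability or uniform Ding stability as defined here.

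The missing step is the transfer of the functionals across the birational equivalence: the paper uses Lemma \ref{lem_invarianceding} to identify the $L$-part of the Ding functional (the canonical measures $e^{-\phi_t}$ agree on the locus where $p$ is an isomorphism and are extended by zero), and Theorem \ref{thm_pullbackenergy} to identify $\cE^1(Y,\theta_Y)$ with $\cE^1(X,\theta)$ in a way preserving minimal singularities, so that $E$, $J$ and $d_1$ coincide (the relevant Monge--Amp\`ere masses agree away from a pluripolar set). With this identification, coercivity of $D$ on $\cE^1(X,\theta)$ yields $D\langle\phi_t\rangle\geq\delta J\langle\phi_t\rangle-C$ along the ray on $Y$ with $\delta,C$ independent of the test configuration, and only then does your slope argument via Corollary \ref{coro_slopes} and Lemma \ref{NA-IJ-uniform} go through for arbitrary big test configurations. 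Adding this transfer step would make your proof essentially identical to the paper's.
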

 \begin{proof}
We prove (ii). Let $\theta\in c_1(-K_X)$ be a smooth representative. By Theorem \ref{thm_dingcoercive}, if $X$ admits a unique Kähler--Einstein metric, then $D$ is coercive on $\cE^1(X,\theta)$. Given $\pi:Y\to X$ a birational equivalence and $\theta_Y$ a big cohomology class on $Y$ such that $\pi_*\theta_Y=\theta$, Lemma \ref{lem_invarianceding} and Theorem \ref{thm_pullbackenergy} imply that $D$ is also coercive on $\cE^1(Y,\theta_Y)$. We now pick a dominant big test configuration with smooth total space (which is sufficient by Proposition \ref{prop:suff-dominant}) $(\cX,\cL)$ for $\pi$, and a $S^1$-invariant metric $\Phi=\{\phi_t\}_t$ on $\cL$ with minimal singularities. By coercivity of $D$ on $\cE^1(Y,\theta_Y)$ and using Theorem \ref{thm_pullbackenergy} (since the Monge--Ampère measures used in defining $J$ and $d_1$ will coincide on birational equivalences away from a pluripolar set) there exists $\delta>0$, $C>0$ (independent of $(\cX,\cL)$) such that for all $t$
$$D\langle\phi_t\rangle\geq \delta J\langle\phi_t\rangle-C,$$
which yields uniform Ding stability upon taking slopes and using Corollary \ref{coro_slopes}. The statement (i) is proven similarly using Theorem \ref{thm_dingmin} (which implies the Ding functional is bounded below in this situation).
 \end{proof}
 
 In precisely the same way we obtain a slope formula for the Ding functional for arbitrary realisible invariant $b$-singularity types, which proves a slope formula for the Ding functional in the most general setting possible.
 
 \begin{corollary}
 
 For a $([\phi],[\Phi])$-realisible test configuration $(\X,\L)$ we have $$\lim_{t \to \infty} \frac{D(t)}{t} = \Ding_{[\Phi]}(\X,\L).$$
 
 \end{corollary}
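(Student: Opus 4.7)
The proof plan is to mirror the argument for Corollary \ref{coro_slopes}, but at every stage use the singularity-type versions of the objects. Writing
\[
D(t) = L\langle \phi_t\rangle - V_{[\phi]}^{-1}\, E_{[\phi]}\langle \phi_t\rangle, \qquad V_{[\phi]} := \langle (-K_X - [\phi])^n\rangle,
\]
where the Monge--Amp\`ere energy is the one relative to the singularity type $[\phi]$, it suffices to compute each slope separately and add.

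For the $E$-term I would invoke Corollary \ref{coro_generalcase} directly: since $(\cX,\cL)$ is $([\phi],[\Phi])$-realisable, and $\Phi$ has fixed relative singularity type $[\phi]$ away from the central fibre, the slope of $E_{[\phi]}\langle \phi_t\rangle$ equals $(n+1)^{-1}\,(\L - \divisor_{\underline{\X}}\Phi)^{n+1}$, i.e.\ the Riemann--Zariski intersection number built from the invariant $b$-singularity type $[\Phi]$. This is precisely the second term in the definition of $\Ding_{[\Phi]}$, modulo the normalisation by $V_{[\phi]}$, which is itself the analogous fibrewise positive intersection product.

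For the $L$-term I would carry out the proof of Theorem \ref{thm_slopeding1} verbatim: its only ingredient is the P\u{a}un--Takayama subharmonicity of $v(t) = \int_Y |s|^2 e^{-\phi_{\cY,t}}$ on a neighbourhood of $0$ (which never used minimal singularities of $\Phi$), together with the standard polar-coordinate identification of the Lelong number at $0$ with the slope at infinity. This yields
\[
\lim_{t\to\infty}\frac{L\langle \phi_t\rangle}{t} \;=\; 1 - c_{\cY_0}(\cY,\Delta_\cY,\Phi_\cY),
\]
where the complex singularity exponent on the right is the analytic log canonical threshold of $\cY_0$ computed with respect to the adapted measure attached to $\Phi_\cY$. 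The final identification
\[
c_{\cY_0}(\cY,\Delta_\cY,\Phi_\cY) \;=\; \lct\bigl((\cX,\Delta,[\Phi]);\cX_0\bigr)
\]
is then a direct application of the valuative criterion for integrability of Boucksom \cite[Corollary 10.14 and 10.15]{boucksom-l2}, translated into the valuative definition of multiplier ideal sheaves given in \eqref{valuative-MIS}: on each model $\cY' \to \cX$, integrability of $\exp(-c\gamma)$ is governed fibre by fibre by the generic Lelong numbers $\nu_E(\Phi)$ along prime divisors $E$, and these are precisely the coefficients defining the $b$-singularity type $[\Phi]$. Adding the two slope formulas and rearranging yields exactly the claimed expression for $\Ding_{[\Phi]}(\cX,\cL)$.

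The only genuinely non-formal step is the last identification: for metrics with analytic singularities this is the classical valuative criterion, but for arbitrary realisable singularity types one has to be careful that the quantities $\nu_E(\Phi)$ (which define both $[\Phi]$ and, by $\J$-model theory and Xia's theorem \cite[Theorem 10.13]{xia-vb}, the $b$-divisor $\divisor_{\underline{\X}}\Phi$) genuinely control the local integrability of the adapted measure. This is where the realisability hypothesis enters in an essential way: it is what allows us to match, away from the central fibre, the analytic Lelong numbers of the realising metric with the algebraic data prescribed by $[\phi]$ and $[\Phi]$, and hence to replace the analytic klt condition by its purely valuative counterpart.
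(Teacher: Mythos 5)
Your proposal is correct and takes essentially the same route as the paper, which proves this corollary ``in precisely the same way'' as Corollary \ref{coro_slopes}: the energy slope comes from Corollary \ref{coro_generalcase}, the $L$-part from Theorem \ref{thm_slopeding1} (which, as the paper remarks, never uses global minimal singularities of $\Phi$), and the final identification of the complex singularity exponent with the log canonical threshold relative to $[\Phi]$ is the valuative criterion exactly as in Corollary \ref{coro_slopeding2}, with $\ord_E$ of the $b$-singularity type playing the role of $\ord_E\|\cL\|$. Your closing caveat about matching Lelong numbers with the prescribed $b$-divisor data is precisely where the realisability hypothesis is used, as in the paper.
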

 
 We end by proving a promised property of the norms of a test configuration.
 
\begin{lemma}\label{NA-IJ-uniform}
For a big test configuration $(\X,\L)$, there are uniform bounds \begin{align*}J(\X,\L) &\leq I(\X,\L) \leq (n+1)J(\X,\L), \\ n^{-1}J(\X,\L)&\leq \|(\X,\L)\|_m = I(\X,\L)- J(\X,\L)\leq nJ(\X,\L). \end{align*}
\end{lemma}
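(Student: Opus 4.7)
The strategy is to reduce the algebraic inequalities to their analytic counterparts in Lemma \ref{IJ-uniform} via the slope formulas established earlier in the section. First I would reduce to a smooth dominant big test configuration. The $I$, $J$, and $I-J$ quantities are each expressed via positive intersection products, and these are invariant under birational pullback, so passing to a smooth equivariant resolution of indeterminacy of the natural map $Y \times \bbp^1 \dashrightarrow \X$ does not change any of the three numerical invariants; it also preserves bigness of $\L$ and effectiveness of $\L-L_Y$, so the hypotheses of Proposition \ref{prop_existencemin} remain satisfied.

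The next step is to produce a test ray along which to compare the functionals pointwise. By Proposition \ref{prop_existencemin} there is a realising $S^1$-invariant metric $\Phi$ on $\L$ with minimal singularities; via the identification of Theorem \ref{thm_pullbackenergy} its fibrewise restrictions yield a ray $\{\phi_t\}_{t\geq 0}$ in $\PSH_{\min}(X,\theta) \subset \cE^1(X,\theta)$. Applying Lemma \ref{IJ-uniform} to each $\phi_t$ gives, for every $t$,
\begin{align*}
J_\theta\langle\phi_t\rangle &\leq I_\theta\langle\phi_t\rangle \leq (n+1) J_\theta\langle\phi_t\rangle, \\
n^{-1} J_\theta\langle\phi_t\rangle &\leq I_\theta\langle\phi_t\rangle - J_\theta\langle\phi_t\rangle \leq n\, J_\theta\langle\phi_t\rangle.
\end{align*}

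The final step is to divide by $t$ and let $t\to\infty$, identifying the slopes. For $J$ the slope is $J(\X,\L)$ by Corollary \ref{coro_slopes}. For $I$, note that $I_\theta$ decomposes as a difference of positive Deligne functionals built from $\phi$ and the reference $V_\theta$ (taken against $\theta^n$ and mixed pairings $\theta_\phi^j\wedge\theta^{n-j}$); Theorem \ref{thm-deligne-slope} applied to the test configuration $(\X,\L)$ and to the trivial test configuration on $L$ shows each such piece has a well-defined slope equal to the corresponding positive intersection number on the total space, and summing them reproduces exactly $V\cdot I(\X,\L)$ with the signs given in the definition. Passing to the limit in both chains of inequalities then yields the desired algebraic bounds.

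The only real obstacle is the identification of $\lim_{t\to\infty} I_\theta\langle\phi_t\rangle/t$ with $I(\X,\L)$; this is not recorded as an independent statement in the paper, but falls directly out of the general slope formula of Theorem \ref{thm-deligne-slope} once $I_\theta$ is written as a combination of positive Deligne functionals. Everything else is formal: the pointwise analytic inequalities are preserved by dividing by $t$ and taking the limit, because all three slopes exist as genuine limits (not merely $\limsup$/$\liminf$) thanks to convexity of $t \mapsto E_\theta\langle\phi_t\rangle$ along weak geodesics and the corresponding convexity/linearity properties of the other Deligne functionals along the ray.
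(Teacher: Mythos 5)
Your proposal is correct and follows essentially the same route as the paper: the paper's proof likewise views $I$, $J$ and $I-J$ as (differences of) positive Deligne functionals, applies the analytic bounds of Lemma \ref{IJ-uniform} along a ray coming from a metric with minimal singularities on $\cL$, and identifies the slopes with $I(\X,\L)$, $J(\X,\L)$ and $\|(\X,\L)\|_m$ via Theorem \ref{thm-deligne-slope}. Your extra details (reduction to a smooth dominant model, realisability via Proposition \ref{prop_existencemin}, the explicit slope identification for $I$) simply spell out what the paper's brief proof leaves implicit, and the appeal to geodesic convexity at the end is unnecessary since the slope formula already yields the limits.
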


\begin{proof} The main point is that each of the $I,J$ and $I-J$ energies can be viewed as Deligne functionals. That is, by Lemma \ref{IJ-uniform} we have analogous analytic uniform bounds for the $I,J$ and $I-J$ energies on $\E^1(X,\theta)$. Associating to $(\X,\L)$ a metric in $c_1(\L)$ with minimal singularities and using Theorem \ref{thm-deligne-slope}, we obtain analogous uniform bounds for their slopes, which are precisely $ I(\X,\L), J(\X,\L)$ and $I(\X,\L)- J(\X,\L) = \|(\X,\L)\|_m$ (with the latter equality holding by definition).
\end{proof}

\begin{remark}

If one takes a big test configuration $(\X,\L)$ and an $S^1$-invariant metric $\Omega \in c_1(\L)$ with minimal singularities then the one obtains a path $\phi_t$ with slope $$\lim_{t\to\infty}\frac{D\langle\phi_t\rangle}{t} =\Ding(\X,\L).$$ The functional $D$ is translation invariant in the sense that $D\langle\phi+c\rangle=D\langle\phi\rangle$ for all $c \in \R$ (as are  $I$ and $J$). The ``asymptotic'' analogue of this property asks that $\Ding(\X,\L)$ equals $\Ding(\X,\L+\scO_{\pr^1}(k))$ for all $k$; this property is usually also called translation invariance. In the big setting, the Ding invariant is \emph{not} translation invariant, even though the Ding functional is. The reason for this is that the sum of two metrics with minimal singularities may not have minimal singularities in general, which is the reason that the positive intersection product is not multilinear, while the mixed Monge--Amp\`ere operator is multilinear. Since the Ding invariant is computed as the slope of the Ding functional,  what \emph{is} true is that $$\Ding_{[\Omega+k\omega_{FS}]}(\X,\L+\scO_{\pr^1}(k))=\Ding_{[\Omega]}(\X,\L),$$ and so the obstruction to translation invariance lies in the equality of singularity types. In particular, translation invariance \emph{does} hold once the condition on singularity types is achieved, for example when both $\Omega$ and $\Omega+k\omega_{FS}$ have minimal singularities.

\end{remark}

\bibliographystyle{alpha}
\bibliography{bib}

\end{document}